\newtheoremstyle{martin}{9pt}{9pt}{}{}{\bfseries}{.}{ }{}
\theoremstyle{martin}
\newtheorem{theorem}{Theorem}[section]
\newtheorem{lemma}[theorem]{Lemma}
\newtheorem{remark}[theorem]{Remark}
\newtheorem{example}[theorem]{Example}
\newcommand{\sectionintrolinebreak}{}
\newcommand\suchthat{\@ifstar
  {\mathrel{}\middle|\mathrel{}}
  {\mid}}
\newcommand{\vol}{\operatorname{vol}}
\newcommand{\nullspace}{{\{0\}}}
\newcommand{\inv}{{-1}}
\newcommand{\grad}{\operatorname{grad}}
\newcommand{\curl}{\operatorname{curl}}
\newcommand{\divergence}{\operatorname{div}}
\newcommand{\DC}{{\operatorname{DC}}}
\newcommand{\trace}{\operatorname{tr}}
\newcommand{\Ext}{\operatorname{Ext}}
\newcommand{\ext}{\operatorname{ext}}
\newcommand{\range}{\operatorname{ran}}
\newcommand{\cartan}{{\mathsf d}}
\newcommand{\set}[1]{ { \left\{ {#1} \right\} } }
\newcommand{\textif}{{\text{ if }}}
\newcommand{\Ned}{{\bfN\bf{d}}}
\newcommand{\RT}{{\bfR\bfT}}
\newcommand{\BDM}{{\bfB\bfD\bfM}}
\newcommand{\linhull}{{\operatorname{span}}}
\newcommand{\bbN}{{\mathbb N}}
\newcommand{\bbR}{{\mathbb R}}
\newcommand{\bbZ}{{\mathbb Z}}
\newcommand{\bfB}{{\mathbf B}}
\newcommand{\bfD}{{\mathbf D}}
\newcommand{\bfH}{{\mathbf H}}
\newcommand{\bfL}{{\mathbf L}}
\newcommand{\bfM}{{\mathbf M}}
\newcommand{\bfN}{{\mathbf N}}
\newcommand{\bfR}{{\mathbf R}}
\newcommand{\bfT}{{\mathbf T}}
\newcommand{\calC}{{\mathcal C}}
\newcommand{\calP}{{\mathcal P}}
\newcommand{\calS}{{\mathcal S}}
\newcommand{\calT}{{\mathcal T}}
\newcommand{\calU}{{\mathcal U}}
\newcommand{\calW}{{\mathcal W}}
\newcommand{\sfP}{{\mathsf P}}
\newcommand{\scrA}{{\mathscr A}}
\newcommand{\scrS}{{\mathscr S}}
\newcommand{\vecn}{{\vec n}}
\begin{document}

\title
[Higher Order FEEC]
{Higher-order finite element de Rham complexes,\\
partially localized flux reconstructions,\\
and applications}

\author{Martin Werner Licht}

\keywords{finite element exterior calculus, equilibrated a~posteriori error estimation, flux reconstruction, hp-adaptive finite element method}

\subjclass[2000]{65N30, 58A12}

\begin{abstract}
 We construct finite element de~Rham complexes of higher and possibly non-uniform polynomial order in finite element exterior calculus (FEEC). 
Starting from the finite element differential complex of lowest-order, known as the complex of Whitney forms,
we incrementally construct the higher-order complexes by adjoining exact local complexes associated to simplices.
We define a commuting canonical interpolant. 
On the one hand, this research provides a base for studying $hp$-adaptive methods in finite element exterior calculus.
On the other hand, our construction of higher-order spaces enables a new tool in numerical analysis which we call ``partially localized flux reconstruction''.
One major application of this concept is in the area of equilibrated a~posteriori error estimators: 
we generalize the Braess-Sch\"oberl error estimator to edge elements of higher and possibly non-uniform order. 
 \end{abstract}

\maketitle

\section{Introduction} \label{sec:intro}

The formalism of differential complexes offers a theoretical access to many partial differential equations in physics and engineering.
The numerical analysis of partial differential equations has embraced finite element differential complexes in understanding mixed finite element methods, such as for computational electromagnetism~\cite{hiptmair2001higher,hiptmair2002finite,Monk2003,AFW1}. 
Whereas many theoretical contributions utilize classical vector calculus, 
the calculus of differential forms enables a unified treatment of differential operators such as the gradient, the curl, or the divergence.
The wide adoption of the calculus of differential forms in the theory of partial differential equations 
has driven the study of finite element differential forms in numerical analysis~\cite{bossavit1988mixed,hiptmair2002finite}.
This line of thought has culminated in \emph{finite element exterior calculus} (FEEC,~\cite{AFW1,AFW2}),
which is the mathematical formalism that we adopt in this contribution.

Research efforts in finite element exterior calculus have focused on spaces of uniform polynomial order. 
Finite element spaces with spatially varying non-uniform polynomial order, however, are constitutive for $p$-adaptive and $hp$-adaptive finite element methods in numerical electromagnetism~\cite{S98_283,demkowicz2006computing,rachowicz2006fully}. 
We recall that $h$-adaptive methods refine the mesh locally but keep the polynomial order fixed,
that $p$-adaptive methods keep the mesh fixed but locally increase the polynomial order,  
and that $hp$-adaptive methods combine local mesh refinement and variation of the polynomial order. 
The latter form of adaptivity enables the efficient approximation of scalar functions with spatially varying smoothness or isolated singularities,
for example by Lagrange elements with non-uniform polynomial order. 
The theory of $hp$-adaptive mixed finite element methods in numerical electromagnetism requires differential complexes of spaces of non-uniform polynomial order, 
which include generalizations of N\'ed\'elec elements and Raviart-Thomas elements~\cite{MONK1994117,Ainsworth20016709,Demkowicz2001,schoberl2005high}.

In this contribution, we study the algebraic and structural properties of finite element de~Rham complexes of higher polynomial order.
We develop a formalism for finite element spaces of non-uniform polynomial order via finite element exterior calculus and construct a commuting interpolant.
This prepares future research on $hp$-adaptive methods in FEEC.
The main contribution of our research effort is a \emph{partially localized flux reconstruction},
which has an application to the efficient implementation of equilibrated a~posteriori error estimators: 
we generalize the locally constructed Braess-Sch\"oberl a~posteriori error estimator for edge elements~\cite{BrSchoMax} to the higher-order case.

It is common practice in the literature on $hp$-FEM to characterize approximation spaces 
by assigning a polynomial order $r_S \in \bbN_0$ to each simplex $S$ of the mesh,
such that the following \emph{hierarchy condition} is satisfied:
simplices have an associated polynomial order at least as large as the ones associated to their subsimplices. 
For example, this leads to spaces of scalar functions whose restriction to each simplex $S$ is a polynomial of order at most $r_S$.
We extend this concept to finite element exterior calculus: 
on each simplex we fix not only the polynomial order but we also choose between the $\calP_r$-family and $\calP_r^{-}$-family of finite element spaces.
For example, this allows one to choose between associating Raviart-Thomas spaces or Brezzi-Douglas-Marini spaces to a triangle.
The choice of spaces is subject to a similar hierarchy condition as in the scalar case.

We build on the intuition that finite element de~Rham complexes of higher (uniform or non-uniform) polynomial order
are constructed from the lowest-order finite element de~Rham complex by \emph{local augmentations} with local higher-order complexes.
A variant of this idea was already used in~\cite{schoberl2005high} and~\cite{zaglmayr2006high}. 
We may recall that the lowest-order finite element de~Rham complex is precisely the differential complex of Whitney forms~\cite{whitney2012geometric,bossavit1988mixed,hiptmair2002finite}.
In three dimensions, the latter translates into the well-known differential complex 
\begin{gather}
 \label{intro:lowestordercomplex}
 \begin{CD}
  \calP_1(\calT) @>{\grad}>> \Ned_{0}(\calT) @>{\curl}>> \RT_{0}(\calT) @>{\divergence}>> \calP_{0,\DC}(\calT)
 \end{CD}
\end{gather}
with respect to a triangulation $\calT$ of a three-dimensional domain~\cite{bossavit1988mixed,AFW1}. 
Here, $\calP_1(\calT)$ denotes piecewise affine Lagrange elements, 
$\Ned_{0}(\calT)$ denotes lowest-order N\'ed\'elec elements,
$\RT_{0}(\calT)$ denotes lowest-order Raviart-Thomas elements,
and $\calP_{0,\DC}(\calT)$ is spanned by the piecewise constant functions.

For a simple example of how to augment this complex 
with a local higher-order sequence, 
we fix $r \in \bbN$ and a tetrahedron $T \in \calT$ and consider the differential complex 
\begin{gather}
 \label{intro:localcomplex}
 \begin{CD}
  \mathring\calP_{r+1}(T)
  @>{\grad}>>
  \mathring\Ned_{r}(T)
  @>{\curl}>>
  \mathring\RT_{r}(\calT)
  @>{\divergence}>>
  \calP_{r}(T) \cap \ker \int_T.
 \end{CD}
\end{gather}
The first three spaces are the higher-order Lagrange space, the N\'ed\'elec space, and the Raviart-Thomas space over $T$
with Dirichlet, tangential, and normal boundary conditions, respectively, along $\partial T$. 
The final space is the order $r$ polynomials over $T$ with vanishing mean value. 
This sequence is exact and supported only over $T$,
and we augment the lowest-order complex~\eqref{intro:lowestordercomplex} by taking the direct sum with~\eqref{intro:localcomplex}.  

Similarly, we may first associate an exact finite element sequence to any lower dimensional subsimplex
and then extend the spaces onto the local patch around the subsimplex.   
If the extension operators commute with the exterior derivative, then the extended spaces over the local patch constitute a differential complex.
Moreover, if the choice of finite element spaces reflects the inclusion ordering of simplices, which we call \emph{hierarchy condition},
then the global finite element spaces constitute a differential complex. 

The degrees of freedom of the global higher-order spaces 
are the direct sum of the degrees of freedom for the Whitney forms 
and the degrees of freedom of the local higher-order spaces.
Following~\cite{deRhamHPFEM}, 
we express the degrees of freedom via Hodge decompositions and define the commuting canonical interpolant onto the finite element de~Rham complex.

Expressing finite element spaces by augmenting the lowest-order finite element space 
allows us to formalize a new concept in finite element methods
that we call \emph{partially localized flux reconstructions}.
In this context, \emph{flux reconstruction} refers to computing a generalized inverse of the exterior derivative between finite element spaces of differential forms. 
An example in the language of vector calculus is computing a generalized inverse of the mapping $\curl : \Ned_{r}(\calT) \rightarrow \RT_{r}(\calT)$
from order $r$ N\'ed\'elec elements to order $r$ Raviart-Thomas elements.

Algorithmically, we can tackle the problem either with a mixed finite element method or by solving normal equations,
either of which require solving a global linear problem over a higher-order finite element space. 
Our flux reconstruction reduces the global problem to the lowest-order case.
For example, assume that $\omega \in \RT_{r}(\calT)$ is the curl of a member of $\Ned_{r}(\calT)$.
We show how to decompose $\omega = \omega_0 + \curl \xi_r$, where $\omega_0 \in \RT_{0}(\calT)$ 
is the canonical interpolation of $\omega$ onto the lowest-order Raviart-Thomas space,
and where $\xi_r \in \Ned_{r}(\calT)$ is constructed by solving independent local linear problems.
These local problems are associated to single tetrahedra, 
and their size and well-posedness depend only on the local polynomial order and mesh quality;
they are independent of each other and hence accessible to parallelization.
It can be shown that there exists $\xi_0 \in \Ned_{0}(\calT)$ with $\curl \xi_0 = \omega_0$,
and so $\xi := \xi_0 + \xi_r \in \Ned_{r}(\calT)$ satisfies $\curl \xi = \omega$. 
We compute the vector field $\xi_0$ by solving a global problem only on a smaller lowest-order space.
The flux reconstruction is \emph{partially localized} in the sense that only the lowest-order terms require a global computation. 

One application of the partially localized flux reconstruction is determining the cohomology groups of finite element de~Rham complex with varying polynomial order. 
Specifically, it facilitates an easy proof that the canonical interpolant onto the Whitney forms induces isomorphisms on cohomology. 
Though we will not focus on that too much,
it also enables estimates for the constants in Poincar\'e-Friedrichs constants in finite element de~Rham complexes. 
We remark that a flux reconstruction for discrete distributional differential forms has enabled estimates for higher-order finite element Poincar\'e-Friedrichs constants in terms of such constants for the complex of Whitney forms~\cite{christiansen2020poincare}.

A major application is solving an open problem in the theory of \emph{equilibrated} a~posteriori error estimators.
They are based on the Prager-Synge identity. 
Those estimators have attracted persistent research efforts because they provide reliable and constant-free upper bounds for error of finite element methods 
\cite{repin2009two,ern2010guaranteed,ainsworth2011posteriori,repin2008posteriori}.
Efficient algorithms for finite element flux reconstruction are critical to make the estimator competitive in computations~\cite{Braess2007,ern2010posteriori,becker2015robust,becker2016local}.
In the case of the Poisson problem, a fully localized flux reconstruction for the divergence operator $\divergence: \RT_{r}(\calT) \rightarrow \calP_{r,\DC}(\calT)$
is possible if the Galerkin solution for the Poisson problem is given as additional information;
the resulting estimator is competitive~\cite{Braess2007,braess2009equilibrated,carstensen2010estimator}. 
Much less is known for equilibrated error estimators in numerical electromagnetism. 
Braess and Sch\"oberl have introduced an equilibrated a~posteriori error estimator 
for the $\curl$-$\curl$ problem over lowest-order N\'ed\'elec elements~\cite{BrSchoMax}. 
Analogously to the Poisson problem, they provide a fully localized flux reconstruction for the curl operator
$\curl: \Ned_{0}(\calT) \rightarrow \RT_{0}(\calT)$ which uses a Galerkin solution of the $\curl$-$\curl$-problem as additional information to achieve full localization. 
Extending this result to higher-order edge elements has remained an open problem.
But our partially localized flux reconstruction enables a \emph{fully localized flux reconstruction} even in the higher-order case.
In effect, we generalize equilibrated a~posteriori error estimators for the $\curl$-$\curl$ problem to the case of edge elements of higher and possibly non-uniform polynomial order. 

There are several directions for future research. 
Whereas the flux reconstruction has been constructed within the framework of finite element exterior calculus, 
we can only apply it to a~posteriori error estimation for edge elements in two and three dimensions. 
It is of not only theoretical but also practical interest to generalize the lowest-order flux reconstruction of~\cite{BrSchoMax} to general Whitney $k$-forms.
Beyond that focus on error estimation, 
it is of further interest in how far the techniques of finite element exterior calculus can contribute to bases with low condition numbers, improved sparsity properties, or fast algorithms~\cite{melenk2002condition,beuchler2007sparse,kirby2012fast,kirby2014low} for vector-valued finite element spaces.

The remainder of this article is structured as follows. 
In Section~\ref{sec:diffforms}, we recapitulate smooth and polynomial differential forms. 
In Section~\ref{sec:simplexcomplexes}, we consider exact sequences of polynomial differential forms over simplices. 
In Section~\ref{sec:whitneycomplex}, we study the complex of Whitney forms over a triangulation and the associated commuting interpolator.
Section~\ref{sec:hierarchycomplexes} discusses finite element de~Rham complexes of higher order.
In Section~\ref{sec:fluxreconstruction}, the partially localized flux reconstruction is introduced. 
Section~\ref{sec:application} eventually demonstrates the application to the Braess-Sch\"oberl error estimator.

\section{Smooth and Polynomial Differential Forms} \label{sec:diffforms}

In this section, we briefly recapitulate the calculus of differential forms on simplices.
We subsequently give a summary of the $\calP_r$ and $\calP_r^{-}$ families of spaces of polynomial differential forms. 
We point out Agricola and Friedrich's monograph~\cite{FriAgri} as a general reference on differential forms. 
Our discussion of polynomial differential forms is based on Arnold, Falk and Winther's seminal publication~\cite{AFW1}.
We only give a small outline and refer the reader to these sources for a thorough treatment. 

We agree on some notation. 
For $a \in \{0,1\}$, $k \in \bbZ$, and $n \in \bbN_0$ we let $\Sigma(a:k,0:n)$ denote the set of strictly increasing mappings
from $\{ a, \dots, k \}$ to $\{ 0, \dots, n \}$.
Note that $\Sigma(a:k,0:n) = \emptyset$ if $k > n$
and that $\Sigma(a:k,0:n) = \{\emptyset\}$ if $k < a$. 
For $n \in \bbN_0$ we let $A(n)$ denote the set of \emph{multiindices} in $n+1$ variables,
i.e.\ the set of functions from $\{0,\dots,n\}$ to $\bbN_0$.
The absolute value of $\alpha \in A(n)$ is $|\alpha| := \alpha(0) + \dots + \alpha(n)$. 
For $r \in \bbN$ we let $A(r,n)$ be the set of multiindices with absolute value $r$.
\\

We let $T \subset \bbR^{N}$ be a fixed but arbitrary $n$-dimensional simplex. 
We henceforth write $\dim T$ for the dimension of any simplex,
which one less than the number of its vertices. 
We write 
$\vol(T)$ for the $n$-dimensional volume of $T$.
We write $\Delta(T)$ for the set of subsimplices of $T$.
If $F \in \Delta(T)$, then $\imath_{F,T} : F \rightarrow T$ denotes the inclusion
in the sense of manifolds with corners.
Note that for $F \in \Delta(T)$ and $f \in \Delta(F)$ we have $\imath_{f,T} = \imath_{F,T} \imath_{f,F}$. 
Throughout this article, we assume that each simplex is equipped with an arbitrary but fixed orientation. 
\\

We let $C^{\infty}(T)$ denote the space of smooth functions over $T$
that are restrictions of a smooth function over $\bbR^{n}$.
More generally, for $k \in \bbZ$ we let $C^{\infty}\Lambda^{k}(T)$ denote the space of smooth differential $k$-forms over $T$
that are the pullback of a smooth differential $k$-form over $\bbR^{n}$ along the embedding of the simplex. 
We have $C^{\infty}\Lambda^{0}(T) = C^{\infty}(T)$ and $C^{\infty}\Lambda^{k}(T) = \emptyset$ for $k \notin \{ 0, \dots, n \}$.
When $\omega \in C^{\infty}\Lambda^{k}(T)$ and $\eta \in C^{\infty}\Lambda^{l}(T)$, 
then $\omega \wedge \eta \in C^{\infty}\Lambda^{k+l}(T)$ denotes the exterior product.
We recall that $\omega \wedge \eta = (-1)^{kl} \eta \wedge \omega$. 
We also remember the exterior derivative 
\begin{gather*}
 \cartan^{k} : C^{\infty}\Lambda^{k}(T) \rightarrow C^{\infty}\Lambda^{k+1}(T), 
\end{gather*}
which satisfies the differential property $\cartan^{k+1} \cartan^{k} \omega = 0$ for all $\omega \in C^{\infty}\Lambda^{k}(T)$.

If $T' \subset \bbR^{N}$ is another simplex and $\Phi : T' \rightarrow T$ is a smooth embedding,
then we have a pullback mapping $\Phi^{\ast} : C^{\infty}\Lambda^{k}(T) \rightarrow C^{\infty}\Lambda^{k}(T')$ for each $k \in \bbZ$.
One can show that $\Phi^{\ast} \cartan^{k} \omega = \cartan^{k} \Phi^{\ast} \omega$ for each $\omega \in C^{\infty}\Lambda^{k}(T)$.

The trace operator $\trace_{T,F}^{k} : C^{\infty}\Lambda^{k}(T) \rightarrow C^{\infty}\Lambda^{k}(F)$
is defined as the pullback of $k$-forms along $\imath_{F,T}$.
We have $\trace_{F,f}^{k} \trace_{T,F}^{k} = \trace_{T,f}^{k}$ for $F \in \Delta(T)$ and $f \in \Delta(F)$.
Moreover, these traces are surjective. 
\\

We write $\{ v_0^{T}, \dots, v_n^{T} \}$ for the set of vertices of $T$.
The barycentric coordinates $\{ \lambda_0^{T}, \dots, \lambda_{n}^{T} \}$ are the unique affine functions over $T$ 
that satisfy $\lambda_{i}^{T}(v_j^{T}) = \delta_{ij}$ for $0 \leq i,j \leq n$. 
We introduce the barycentric monomials $\lambda^{\alpha}_{T} := \prod_{i=0}^{n} \left( \lambda_i^{T} \right)^{\alpha(i)}$ for $\alpha \in A(n)$,
and define the space $\calP_{r}(T)$ of barycentric polynomials up to order $r$ 
as the span of barycentric monomials $\{ \lambda^{\alpha}_{T} \}_{ \alpha \in A(r,n) }$ of order $r$.

We define the barycentric $k$-alternators as the differential $k$-forms
\begin{gather*}
 \cartan\lambda_{\sigma}^{T} := \cartan\lambda_{\sigma(0)}^{T} \wedge \dots \wedge \cartan\lambda_{\sigma(k)}^{T}, 
 \quad 
 \sigma \in \Sigma(1:k,0:n), 
\end{gather*}
and the barycentric Whitney $k$-forms as 
\begin{gather*}
 \phi_{\rho}^{T} := \sum_{i=0}^{k} (-1) \lambda_{\rho(i)}^{T} 
 \cartan\lambda_{\rho - \rho(i)}^{T},
 \quad 
 \rho \in \Sigma(0:k,0:n).  
\end{gather*}
We then define 
\begin{align}
 \label{math:space:pr}
 \calP_r\Lambda^{k}(T) 
 &:=
 \linhull\left\{ \; 
  \lambda^{\alpha}_{T} \cartan\lambda_{\sigma}^{T} \;\suchthat*\; \alpha \in A(r,n), \; \sigma \in \Sigma(1:k,0:n) 
 \; \right\},
 \\
 \label{math:space:prminus}
 \calP_r^{-}\Lambda^{k}(T) 
 &:=
 \linhull\left\{ \; 
  \lambda^{\alpha}_{T} \phi_{\rho}^{T} \;\suchthat*\; \alpha \in A(r-1,n), \; \rho \in \Sigma(0:k,0:n) 
 \; \right\}.
\end{align}
We adhere to the convention that $\calP_r\Lambda^{k}(T) = \nullspace$ and $\calP_r^{-}\Lambda^{k}(T) = \nullspace$ for negative polynomial order $r$.
Note that $\calP_r(T) = \calP_{r}\Lambda^{0}(T)$. 

If $F \in \Delta(T)$ is a subsimplex, then 
\begin{align}
 \label{math:polydiffform:simplex:trace}
 \calP_{r}\Lambda^{k}(F)
 =
 \trace_{T,F}^{k} \calP_{r}\Lambda^{k}(T)
 ,\quad 
 \calP_{r}^{-}\Lambda^{k}(F)
 =
 \trace_{T,F}^{k} \calP_{r}^{-}\Lambda^{k}(T)
 .
\end{align}
Via the traces, we define spaces with boundary conditions. We write 
\begin{gather}
 \label{math:space:pr:bc}
 \mathring\calP_{r}\Lambda^{k}(T) 
 :=
 \left\{ \;
  \omega \in \calP_{r}\Lambda^{k}(T) 
  \;\suchthat*\; 
  \forall F \in \Delta(T) : \trace_{T,F}^{k} \omega = 0
 \;\right\}
 ,\\
 \label{math:space:prminus:bc}
 \mathring\calP_{r}^{-}\Lambda^{k}(T) 
 :=
 \left\{ \;
  \omega \in \calP_{r}^{-}\Lambda^{k}(T) 
  \;\suchthat*\; 
  \forall F \in \Delta(T) : \trace_{T,F}^{k} \omega = 0
 \;\right\}
 .
\end{gather}
These spaces are affinely invariant in the following sense. 
For every bijective affine mapping $\Phi : T' \rightarrow T$ from a simplex $T'$ onto $T$ we have 
\begin{align}
 \label{math:polydiffform:simplex:affineinvariance}
 \calP_{r}\Lambda^{k}(T')
 =
 \phi^{\ast}\calP_{r}\Lambda^{k}(T)
 ,\quad 
 \calP_{r}^{-}\Lambda^{k}(T')
 =
 \phi^{\ast}\calP_{r}^{-}\Lambda^{k}(T)
 , 
 \\
 \label{math:polydiffform:simplex:affineinvariance:bc}
 \mathring\calP_{r}\Lambda^{k}(T')
 =
 \phi^{\ast} \mathring\calP_{r}\Lambda^{k}(T)
 ,\quad 
 \mathring\calP_{r}^{-}\Lambda^{k}(T')
 =
 \phi^{\ast} \mathring\calP_{r}^{-}\Lambda^{k}(T)
 .
\end{align}
We recall some further inclusions and identities. One can show that  
\begin{gather}
 \label{math:simplex:polydiffform:inclusion}
 \calP_{r}\Lambda^{k}(T)
 \subseteq \calP_{r+1}^{-}\Lambda^{k}(T)
 \subseteq \calP_{r+1}\Lambda^{k}(T),
 \\
 \label{math:simplex:polydiffform:inclusion:bc}
 \mathring\calP_{r}\Lambda^{k}(T)
 \subseteq \mathring\calP_{r+1}^{-}\Lambda^{k}(T)
 \subseteq \mathring\calP_{r+1}\Lambda^{k}(T),
 \\
 \label{math:simplex:polydiffform:diffinclusion}
 \cartan^{k} \calP_{r}\Lambda^{k}(T) \subseteq \calP_{r-1}\Lambda^{k+1}(T),
 \quad 
 \cartan^{k} \mathring\calP_{r}\Lambda^{k}(T) \subseteq \mathring\calP_{r-1}\Lambda^{k+1}(T), 
 \\
 \label{math:simplex:polydiffform:diffinclusion:bc}
 \cartan^{k} \calP_{r}^{-}\Lambda^{k}(T) = \cartan^{k} \calP_{r}\Lambda^{k}(T),
 \quad 
 \cartan^{k} \mathring\calP_{r}^{-}\Lambda^{k}(T) = \cartan^{k} \mathring\calP_{r}\Lambda^{k}(T).
\end{gather}
For positive polynomial order $r \geq 1$ we additionally have 
\begin{gather}
 \calP_{r}^{-}\Lambda^{0}(T) = \calP_{r}\Lambda^{0}(T),
 \quad 
 \mathring\calP_{r}^{-}\Lambda^{0}(T) = \mathring\calP_{r}\Lambda^{0}(T),
 \\
 \calP_{r}^{-}\Lambda^{n}(T) = \calP_{r-1}\Lambda^{n}(T),
 \quad 
 \mathring\calP_{r}^{-}\Lambda^{n}(T) = \mathring\calP_{r-1}\Lambda^{n}(T)
 .
\end{gather}
One can show that $\mathring\calP_r\Lambda^{k} = \nullspace$ if $r < n-k-1$
and that $\mathring\calP^{-}_r\Lambda^{k} = \nullspace$ if $r < n-k+1$.
This follows, for example, from the dimension identities 
\begin{subequations}
\begin{gather*}
 \dim \calP_r\Lambda^{k}(T) = \binom{n+r}{n}\binom{n}{k},
 \quad
 \dim \calP^{-}_r\Lambda^{k}(T) = \binom{r+k-1}{k}\binom{n+r}{n-k},
 \\
 \dim \mathring\calP_r\Lambda^{k}(T) = \binom{r+1}{n-k}\binom{r+k}{k},
 \quad 
 \dim \mathring\calP^{-}_r\Lambda^{k}(T) = \binom{n}{k}\binom{r+k-1}{n}.
\end{gather*}
\end{subequations}

\begin{remark}
 Definitions~\eqref{math:space:pr} and~\eqref{math:space:prminus} of $\calP_r\Lambda^{k}(T)$ and $\calP_r^{-}\Lambda^{k}(T)$
 are in terms of spanning sets that are not linearly independent in general.
 For explicit bases for $\calP_r\Lambda^{k}(T)$ and $\calP_r^{-}\Lambda^{k}(T)$
 and the spaces $\mathring\calP_r\Lambda^{k}(T)$ and $\mathring\calP_r^{-}\Lambda^{k}(T)$
 we refer to~\cite{afwgeodecomp}. \end{remark}

Two important polynomial differential forms over $T$
are $1_T \in \calP_0\Lambda^{0}(T)$, the constant function over $T$ which at each point takes the value $1$,
and the volume form $\vol_T \in \calP_0\Lambda^{n}(T)$, the unique constant $n$-form over $T$
with $\int_T \vol_T = \vol^{n}(T)$. These span the constant functions $\calP_0\Lambda^{0}(T)$ and the constant $n$-forms $\calP_0\Lambda^{n}(T)$, respectively.
The former is the kernel of $\cartan^{0}$ and the latter is complementary to the range of $\cartan^{n-1}$. 
It will be convenient to introduce notation for spaces with those special differential forms removed. 
Let $\int_T : C^{\infty}\Lambda^{0}(T) \rightarrow \bbR$ and $\int_T : C^{\infty}\Lambda^{n}(T) \rightarrow \bbR$
denote the respective integral mappings of $0$- and $n$-forms over $T$. 
We set  
\begin{align}
 \underline\calP_{r}\Lambda^{k}(T)
 &:=
 \left\{\begin{array}{rl}
  \calP_{r}\Lambda^{0}(T) \cap \ker \int_T &\textif k = 0,
  \\
  \calP_{r}\Lambda^{k}(T) & \text{ otherwise, }
 \end{array}\right. 
 \\
 \underline\calP_{r}^{-}\Lambda^{k}(T)
 &:=
 \left\{\begin{array}{rl}
  \calP_{r}\Lambda^{0}(T) \cap \ker \int_T &\textif k = 0,
  \\
  \calP_{r}^{-}\Lambda^{k}(T) & \text{ otherwise, }
 \end{array}\right.  
 \\
 \underline{\mathring\calP}_{r}\Lambda^{k}(T)
 &:=
 \left\{\begin{array}{rl}
  \mathring\calP_{r}\Lambda^{n}(T) \cap \int_T &\textif k = n,
  \\
  \mathring\calP_{r}\Lambda^{k}(T) & \text{ otherwise, }
 \end{array}\right. 
 \\
 \underline{\mathring\calP}_{r}^{-}\Lambda^{k}(T)
 &:=
 \left\{\begin{array}{rl}
  \mathring\calP_{r}^{-}\Lambda^{n}(T) \cap \int_T &\textif k = n, 
  \\
  \mathring\calP_{r}^{-}\Lambda^{k}(T) & \text{ otherwise. }
 \end{array}\right. 
\end{align}
Given $r \geq 0$, we obviously have the direct sum decompositions 
\begin{gather}
 \label{math:simplex:polydiffform:decomposition}
 \calP_{r}\Lambda^{0}(T) = \underline\calP_{r}\Lambda^{0}(T) \oplus \bbR \cdot 1_T,
 \quad 
 \calP_{r+1}^{-}\Lambda^{0}(T) = \underline\calP_{r+1}^{-}\Lambda^{0}(T) \oplus \bbR \cdot 1_T,
 \\
 \label{math:simplex:polydiffform:decomposition:bc}
 {\mathring\calP}_{r}\Lambda^{n}(T) = \underline{\mathring\calP}_{r}\Lambda^{n}(T) \oplus \bbR \cdot \vol_T, 
 \quad 
 {\mathring\calP}_{r+1}^{-}\Lambda^{n}(T) = \underline{\mathring\calP}_{r+1}^{-}\Lambda^{n}(T) \oplus \bbR \cdot \vol_T, 
\end{gather}
and no changes in the other cases. 

With these spaces, 
we may concisely state the following exactness properties of complexes of polynomial differential forms:
\begin{gather}
 \label{math:exactnessresult}
 \forall \omega \in \underline\calP_{r}\Lambda^{k}(T)
 : 
 \left( \cartan^{k}\omega = 0 \implies \exists \eta \in \underline\calP_{r+1}^{-}\Lambda^{k-1} : \cartan^{k-1} \eta = \omega \right), 
 \\
 \label{math:exactnessresult:bc}
 \forall \omega \in \underline{\mathring\calP}_{r}\Lambda^{k}(T)
 : 
 \left( \cartan^{k}\omega = 0 \implies \exists \eta \in \underline{\mathring\calP}_{r+1}^{-}\Lambda^{k-1} : \cartan^{k-1} \eta = \omega \right).
\end{gather}
These results will be used in the next section. 

\begin{example}
 We recapitulate a few examples 
 how these concepts translate to classical finite element spaces 
 when $T$ is a triangle and $r \geq 1$. 
 We refer to~\cite{AFW1} for further elaboration on these examples. 
 
 For $k=0$ the space $\calP_{r}\Lambda^{0}(T) = \calP_{r}^{-}\Lambda^{0}(T)$ translates into the space of order $r$ polynomials over $T$.
 Additionally $\mathring\calP_{r}\Lambda^{0}(T) = \mathring\calP_{r}^{-}\Lambda^{0}(T)$ is the subspace satisfying Dirichlet boundary conditions along the edges,
 and $\underline\calP_{r}\Lambda^{0}(T) = \underline\calP_{r}^{-}\Lambda^{0}(T)$ 
 is the subspace of order $r$ polynomials with vanishing mean value. 
 
 For $k=1$ the two families translate into different spaces: 
 $\calP_r\Lambda^{k}(T)$ translates into the order $r$ Brezzi-Douglas-Marini space $\BDM_{r}(T)$, 
 and
 $\calP_{r+1}^{-}\Lambda^{k}(T)$ translates into the order $r$ Raviart-Thomas space $\RT_{r}(T)$.
We write $\mathring\BDM_{r}(T)$ and $\mathring\RT_{r}(T)$ for the subspaces with boundary conditions,
 which in this case are normal boundary conditions along the simplex boundary.
 
 Finally, for $k=2$ we have $\calP_{r-1}\Lambda^{2}(T) = \calP_{r}^{-}\Lambda^{2}(T)$.
 This space translates into polynomials over $T$ of order $r-1$,
 but this time imposing boundary conditions does not change the space. 
 The subspace $\underline\calP_{r-1}\Lambda^{2}(T) = \underline\calP_{r}^{-}\Lambda^{2}(T)$
 corresponds to the order $(r-1)$ polynomials over $T$ with vanishing mean value. 
\end{example}

\section{Polynomial de~Rham Complexes over Simplices} \label{sec:simplexcomplexes}

This section develops a theory of polynomial de~Rham complexes over simplices. 
We prove their exactness and obtain a representation of the degrees of freedom. 
We first observe that differential complexes of similar \emph{type} appear throughout finite element exterior calculus in different variants.
For example, a differential complex of trimmed polynomial differential forms of fixed order $r$ appears as a differential complex over a single simplex,
over a triangulation, or with boundary conditions. 
It is of interest to turn the idea of sequences having a \emph{type} into a rigorous mathematical notion. 
A particular motivation is developing differential complexes in the theory of $hp$-adaptive methods, composed of finite element spaces of non-uniform polynomial order. 
In that application, we wish to assign types of polynomial de~Rham complexes to each simplex to describe the local order of approximation. 
\sectionintrolinebreak

We first introduce a set of formal symbols 
\begin{align}
 \label{math:formalsymbols}
 \scrS
 :=
 \left\{ 
  \ldots, \calP_{r-1}, \calP_{r}^{-}, \calP_r, \calP_{r+1}^{-}, \ldots
 \right\}
 .
\end{align}
The set $\scrS$ is endowed with a total order $\leq$ defined by $\calP_r^{-} \leq \calP_{r}$ and $\calP_{r} \leq \calP_{r+1}^{-}$.

An \emph{admissible sequence type} is a mapping $\calP : \bbZ \rightarrow \scrS$
that satisfies the condition
\begin{align}
 \label{math:admissiblesequencetype}
 \forall k \in \bbZ : 
 \calP(k) \in \left\{ \calP_r^{-}, \calP_r \right\}
 \implies 
 \calP(k+1) \in \left\{ \calP_{r}^{-}, \calP_{r-1} \right\}
 .
\end{align}
If $\calP \in \scrA$ is an admissible sequence type and $T$ is an $n$-simplex,
then we define for each $k \in \bbZ$ the spaces 
\begin{align}
 \calP\Lambda^{k}(T)
 :=
 \left\{ \begin{array}{rl}
  \calP_r\Lambda^{k}(T) 
  &\text{if } \calP(k) = \calP_r,
  \\
  \calP_r^{-}\Lambda^{k}(T)
  &\text{if } \calP(k) = \calP_r^{-},
 \end{array}\right.
 \\ 
 \mathring\calP\Lambda^{k}(T)
 :=
 \left\{ \begin{array}{rl}
  \mathring\calP_r\Lambda^{k}(T) 
  &\text{if } \calP(k) = \calP_r,
  \\
  \mathring\calP_r^{-}\Lambda^{k}(T)
  &\text{if } \calP(k) = \calP_r^{-},
 \end{array}\right.
 \\
 \underline{\calP}\Lambda^{k}(T)
 :=
 \left\{ \begin{array}{rl}
  \underline{\calP}_r\Lambda^{k}(T) 
  &\text{if } \calP(k) = \calP_r,
  \\
  \underline{\calP}_r^{-}\Lambda^{k}(T)
  &\text{if } \calP(k) = \calP_r^{-},
 \end{array}\right.
 \\ 
 \underline{\mathring\calP}\Lambda^{k}(T)
 :=
 \left\{ \begin{array}{rl}
  \underline{\mathring\calP}_r\Lambda^{k}(T) 
  &\text{if } \calP(k) = \calP_r,
  \\
  \underline{\mathring\calP}_r^{-}\Lambda^{k}(T)
  &\text{if } \calP(k) = \calP_r^{-}.
 \end{array}\right.
\end{align}
We let $\scrA$ denote the set of admissible sequence types. 
The total order on $\scrS$ induces a partial order $\leq$ on $\scrA$,
where for all $\calP,\calS \in \scrA$ we have $\calP \leq \calS$
if and only if for all $k \in \bbZ$ we have $\calP(k) \leq \calS(k)$.

The notation already suggests that the symbols $\scrS$ describe finite element spaces,
whereas the admissible sequence types $\scrA$ describe finite element differential complexes. 
To make this idea rigorous, we begin with an easy observation that follows from~\eqref{math:admissiblesequencetype}.
For each admissible sequence type $\calP \in \scrA$, $k \in \bbZ$ and simplex $T$ we have 
\begin{gather*}
 \cartan^{k} \calP\Lambda^{k}(T) \subseteq \calP\Lambda^{k+1}(T), 
 \quad 
 \cartan^{k} \mathring\calP\Lambda^{k}(T) \subseteq \mathring\calP\Lambda^{k+1}(T),
 \\
 \cartan^{k} \underline{\calP}\Lambda^{k}(T) \subseteq \underline{\calP}\Lambda^{k+1}(T),
 \quad 
 \cartan^{k} \underline{\mathring\calP}\Lambda^{k}(T) \subseteq \underline{\mathring\calP}\Lambda^{k+1}(T)
 . 
\end{gather*}
In light of this, we compose differential complexes in accordance with a given admissible sequence type.
Suppose that $T$ is a simplex and that $\calP \in \scrA$ is an admissible sequence type. 
Then we have a polynomial de~Rham complex over $T$, \begin{gather}
 \label{math:polydiffcomplex:absolute}
 \begin{CD}
  0 \to \bbR @>>> \calP\Lambda^{0}(T) @>\cartan^{0}>> \dots @>\cartan^{n-1}>> \calP\Lambda^{n}(T) \to 0 
 \end{CD}
\end{gather}
and a polynomial de~Rham complex over $T$ with boundary conditions, \begin{gather}
 \label{math:polydiffcomplex:relative}
 \begin{CD}
  0 \to \mathring \calP\Lambda^{0}(T) @>\cartan^{0}>> \dots @>\cartan^{n-1}>> \mathring \calP\Lambda^{n}(T) @>>> \bbR \to 0
  .
 \end{CD}
\end{gather}
We will also consider the reduced differential complexes 
\begin{gather}
 \label{math:polydiffcomplex:absolute:reduced}
 \begin{CD}
  0 \to \underline\calP\Lambda^{0}(T) @>\cartan^{0}>> \dots @>\cartan^{n-1}>> \underline\calP\Lambda^{n}(T) \to 0 
 \end{CD}
\\
 \label{math:polydiffcomplex:relative:reduced}
 \begin{CD}
  0 \to \underline{\mathring \calP}\Lambda^{0}(T) @>\cartan^{0}>> \dots @>\cartan^{n-1}>> \underline{\mathring \calP}\Lambda^{n}(T) \to 0
  .
 \end{CD}
\end{gather}
We establish the exactness of these differential complexes. 

\begin{lemma}
 Let $T$ be a simplex and let $\calP \in \scrA$ be an admissible sequence type.
 If $1_T \in \calP\Lambda^{0}(T)$, then~\eqref{math:polydiffcomplex:absolute} is well-defined and exact.
If $\vol_T \in \mathring \calP\Lambda^{n}(T)$, then~\eqref{math:polydiffcomplex:relative} is exact.
\end{lemma}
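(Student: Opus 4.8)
The plan is to peel the constants off both ends, reduce the two sequences to the reduced complexes \eqref{math:polydiffcomplex:absolute:reduced} and \eqref{math:polydiffcomplex:relative:reduced}, and then read interior exactness off the exactness properties \eqref{math:exactnessresult} and \eqref{math:exactnessresult:bc}, using the admissibility condition \eqref{math:admissiblesequencetype} as the bookkeeping device that places each potential into the correct space. The inclusions $\cartan^{k}\calP\Lambda^{k}(T)\subseteq\calP\Lambda^{k+1}(T)$ recorded just before the lemma, together with $\cartan^{k+1}\cartan^{k}=0$, already make \eqref{math:polydiffcomplex:absolute} and \eqref{math:polydiffcomplex:relative} differential complexes; the leftmost arrow of \eqref{math:polydiffcomplex:absolute} is the inclusion of the constants, which is well-defined exactly because $1_T\in\calP\Lambda^{0}(T)$, and exactness at $\bbR$ and at $\calP\Lambda^{0}(T)$ is the standard identity $\ker\cartan^{0}=\bbR\cdot 1_T$. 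Dually, the rightmost arrow of \eqref{math:polydiffcomplex:relative} is $\int_T$, which is onto precisely because $\int_T\vol_T=\vol(T)\neq 0$ needs $\vol_T\in\mathring\calP\Lambda^{n}(T)$; exactness at the terminal $\bbR$ is this surjectivity. Splitting off $\bbR\cdot 1_T$ and $\bbR\cdot\vol_T$ via \eqref{math:simplex:polydiffform:decomposition} and \eqref{math:simplex:polydiffform:decomposition:bc}, the remaining interior exactness of \eqref{math:polydiffcomplex:absolute} and \eqref{math:polydiffcomplex:relative} is equivalent to exactness of \eqref{math:polydiffcomplex:absolute:reduced} and \eqref{math:polydiffcomplex:relative:reduced}.

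For \eqref{math:polydiffcomplex:absolute:reduced}, injectivity of $\cartan^{0}$ on $\underline\calP\Lambda^{0}(T)$ holds because $\ker\cartan^{0}\cap\ker\int_T=\{0\}$, and exactness at level $k$ (for $1\leq k\leq n$) means that every closed $\omega\in\underline\calP\Lambda^{k}(T)$ has a potential in $\underline\calP\Lambda^{k-1}(T)$. I would split on $\calP(k)$. If $\calP(k)=\calP_r$, then $\omega\in\underline\calP_r\Lambda^{k}(T)$ and \eqref{math:exactnessresult} supplies $\eta\in\underline\calP_{r+1}^{-}\Lambda^{k-1}(T)$ with $\cartan^{k-1}\eta=\omega$; reading \eqref{math:admissiblesequencetype} backwards forces $\calP(k-1)\in\{\calP_{r+1},\calP_{r+1}^{-}\}$, and since $\calP_{r+1}^{-}\Lambda^{k-1}(T)\subseteq\calP_{r+1}\Lambda^{k-1}(T)$ by \eqref{math:simplex:polydiffform:inclusion}, in either case $\eta\in\underline\calP\Lambda^{k-1}(T)$. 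The relative complex is handled identically with \eqref{math:exactnessresult:bc}, and at the extreme levels the endpoint identities $\calP_r^{-}\Lambda^{0}(T)=\calP_r\Lambda^{0}(T)$ and $\calP_r^{-}\Lambda^{n}(T)=\calP_{r-1}\Lambda^{n}(T)$ let me regard a trimmed space at $k=0$ or $k=n$ as an untrimmed one of suitable order, so that the same argument applies there.

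The main obstacle is a trimmed target at an interior level, $\calP(k)=\calP_r^{-}$ with $1\leq k\leq n-1$, where \eqref{math:admissiblesequencetype} only yields $\calP(k-1)\in\{\calP_r,\calP_r^{-}\}$. Feeding $\omega\in\underline\calP_r^{-}\Lambda^{k}(T)\subseteq\underline\calP_r\Lambda^{k}(T)$ into \eqref{math:exactnessresult} returns a potential in $\underline\calP_{r+1}^{-}\Lambda^{k-1}(T)$, which is one polynomial order too high for the complex; this single order is the genuine content that \eqref{math:exactnessresult} does not by itself supply, and I expect it to cost the most work. I would use the image identity \eqref{math:simplex:polydiffform:diffinclusion:bc} to reduce the goal to exactness of the fixed-order trimmed complex, namely that each such closed $\omega$ lies in $\cartan^{k-1}\underline\calP_r\Lambda^{k-1}(T)$, and then close it by a rank count: \eqref{math:simplex:polydiffform:diffinclusion:bc} identifies $\dim\cartan^{k}\calP\Lambda^{k}(T)$ with $\dim\cartan^{k}\calP_r\Lambda^{k}(T)$, whose kernel is pinned down by the exactness of the pure $\calP_r$-sequence (itself an instance of the untrimmed case just treated), so every rank along \eqref{math:polydiffcomplex:absolute:reduced} is determined by the displayed dimension formulas; exactness at an interior position is then the numerical identity $\dim\calP\Lambda^{k}(T)=\dim\cartan^{k-1}\calP\Lambda^{k-1}(T)+\dim\cartan^{k}\calP\Lambda^{k}(T)$, which I verify from those formulas. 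The relative case is parallel, using \eqref{math:exactnessresult:bc} and the boundary-condition half of \eqref{math:simplex:polydiffform:diffinclusion:bc}, and it may alternatively be deduced from the absolute one through the integration pairing relating \eqref{math:polydiffcomplex:absolute} and \eqref{math:polydiffcomplex:relative}, or the order-sharp step may instead be cited from the Koszul decomposition in~\cite{afwgeodecomp}.
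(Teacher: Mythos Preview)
Your argument is correct, and for the untrimmed case $\calP(k)=\calP_r$ it coincides with the paper's: apply \eqref{math:exactnessresult} and read admissibility backwards. The difference is in how the trimmed case $\calP(k)=\calP_r^{-}$ is dispatched.

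You treat that case by a rank count, computing $\dim\cartan^{j}\calP\Lambda^{j}(T)$ via \eqref{math:simplex:polydiffform:diffinclusion:bc} and the already-settled untrimmed exactness, and then checking the numerical identity from the dimension formulas. This works, but the paper bypasses it with a one-line observation you did not use: a closed $\omega\in\calP_r^{-}\Lambda^{k}(T)$ (with $k\geq 1$) automatically lies in $\calP_{r-1}\Lambda^{k}(T)$. The paper phrases this as choosing the \emph{maximal} $r$ with $\omega\in\calP_r\Lambda^{k}(T)\subseteq\calP\Lambda^{k}(T)$ and then invoking Lemma~3.8 of~\cite{AFW1} uniformly, without splitting into trimmed and untrimmed cases; the potential lands in $\calP_{r+1}^{-}\Lambda^{k-1}(T)$, and admissibility forces $\calP_{r+1}^{-}\Lambda^{k-1}(T)\subseteq\calP\Lambda^{k-1}(T)$ in both cases at once. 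Concretely, when $\calP(k)=\calP_s^{-}$ the maximal such $r$ is $s-1$, and the potential sits in $\calP_s^{-}\Lambda^{k-1}(T)$, which is contained in $\calP\Lambda^{k-1}(T)$ whether $\calP(k-1)=\calP_s$ or $\calP_s^{-}$. For the relative complex the paper cites the commuting interpolant of Section~5 of~\cite{AFW1} rather than \eqref{math:exactnessresult:bc}, but the structure is the same.

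What each approach buys: the paper's ``maximal $r$'' device is shorter and treats both cases uniformly, at the cost of leaning on an external fact (closed trimmed forms drop an order, i.e.\ exactness of the fixed-order $\calP_r^{-}$ complex) that is implicit in the cited lemma. Your route is more self-contained from the statements actually displayed in the paper, and your dimension count is really just a repackaging of that same exactness; your closing remark that one may instead cite the Koszul decomposition is precisely the shortcut the paper takes.
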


\begin{proof} 
 With regard to the first sequence, it is obvious that $\ker \cartan^{0} \cap \calP\Lambda^{0}(T)$ is spanned by $1_T$. 
 Let $k \in \{ 1, \dots, n \}$ and $\omega \in \calP\Lambda^{k}(T)$ with $\cartan^{k}\omega = 0$. 
 Then there exists a maximal $r \in \bbZ$ with $\omega \in \calP_r\Lambda^{k}(T) \subseteq \calP\Lambda^{k}(T)$.
 By Lemma~3.8 of~\cite{AFW1}, 
 there exists $\xi \in \calP_{r+1}^{-}\Lambda^{k-1}(T)$ with $\cartan^{k-1} \xi = \omega$.
 Since $\calP_{r+1}^{-}\Lambda^{k-1}(T) \subseteq \calP\Lambda^{k-1}(T)$,
 the exactness of the first sequence follows. 
 
 With regard to the second sequence, it is obvious that $\ker \cartan^{0} \cap \mathring \calP\Lambda^{0}(T)$ is the trivial vector space.
 Now let $k \in \{ 1, \dots, n \}$ and $\omega \in \calP\Lambda^{k}(T)$ with $\cartan^{k}\omega = 0$.
 We assume additionally $\int_T \omega = 0$ if $k = n$.
There exists a maximal $r \in \bbZ$ such that $\omega \in \mathring\calP_r\Lambda^{k}(T) \subseteq \mathring\calP\Lambda^{k}(T)$. 
 Using the commuting interpolant developed in Section~5 of~\cite{AFW1},
we find $\eta \in \mathring\calP^{-}_{r+1}\Lambda^{k-1}(T)$ with $\cartan^{k-1} \eta = \omega$.
 Since $\mathring\calP^{-}_{r+1}\Lambda^{k-1}(T) \subseteq \mathring\calP\Lambda^{k-1}(T)$,
 the exactness of the second sequence follows.
\end{proof}

\begin{lemma}
 Let $T$ be a simplex and let $\calP$ be an admissible sequence type. Then~\eqref{math:polydiffcomplex:absolute:reduced} 
and~\eqref{math:polydiffcomplex:relative:reduced}
are exact sequences. \qed
\end{lemma}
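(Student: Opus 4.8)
The plan is to mimic the proof of the previous lemma, but to feed every use of exactness through the reduced exactness properties \eqref{math:exactnessresult} and \eqref{math:exactnessresult:bc}, whose vanishing-mean and vanishing-integral normalizations are tailored to the underlined spaces. Both reduced sequences are well-defined complexes, since we have already observed that $\cartan^{k}$ maps $\underline\calP\Lambda^{k}(T)$ into $\underline\calP\Lambda^{k+1}(T)$ and $\underline{\mathring\calP}\Lambda^{k}(T)$ into $\underline{\mathring\calP}\Lambda^{k+1}(T)$. By \eqref{math:simplex:polydiffform:decomposition} and \eqref{math:simplex:polydiffform:decomposition:bc}, the reduced spaces coincide with the spaces in \eqref{math:polydiffcomplex:absolute} and \eqref{math:polydiffcomplex:relative} at every degree except that in degree $0$ the constants $\bbR\cdot 1_T$ are removed and in degree $n$ the multiples $\bbR\cdot\vol_T$ of the volume form are removed. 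Consequently I only have to re-examine the two extreme degrees; crucially, since the reduced complexes carry no augmenting copy of $\bbR$, the hypotheses $1_T\in\calP\Lambda^{0}(T)$ and $\vol_T\in\mathring\calP\Lambda^{n}(T)$ of the previous lemma will not be needed, and the argument stays uniform in $\calP$.

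First I would dispatch injectivity at the bottom. The kernel of $\cartan^{0}$ on $\calP\Lambda^{0}(T)$ is spanned by $1_T$; since $\underline\calP\Lambda^{0}(T)\subseteq\ker\int_T$ and $\int_T 1_T=\vol(T)\neq 0$, this kernel meets $\underline\calP\Lambda^{0}(T)$ only in the origin, so $\cartan^{0}$ is injective on $\underline\calP\Lambda^{0}(T)$. For the relative complex the same conclusion holds on $\mathring\calP\Lambda^{0}(T)$, because a constant function whose trace vanishes on every facet of $T$ is zero.

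For a degree $k$ with $1\le k\le n$, let $\omega$ be a closed form in the reduced space at degree $k$. Exactly as in the previous lemma I pass to the largest admissible polynomial order: when $\calP(k)=\calP_s$ I keep $r=s$, and when $\calP(k)=\calP_s^{-}$ I use closedness to drop the order, so that $\omega$ already lies in the untrimmed space of order $s-1$ (for the relative complex at $k=n$ this is just the identity $\mathring\calP_s^{-}\Lambda^{n}(T)=\mathring\calP_{s-1}\Lambda^{n}(T)$). Then \eqref{math:exactnessresult}, respectively \eqref{math:exactnessresult:bc}, yields a potential $\eta$ in $\underline\calP_{r+1}^{-}\Lambda^{k-1}(T)$, respectively $\underline{\mathring\calP}_{r+1}^{-}\Lambda^{k-1}(T)$, with $\cartan^{k-1}\eta=\omega$, and the admissibility condition \eqref{math:admissiblesequencetype} forces $\calP(k-1)\in\{\calP_{r+1}^{-},\calP_{r+1}\}$. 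The latter is precisely the inclusion that places $\eta$ inside $\underline\calP\Lambda^{k-1}(T)$, respectively $\underline{\mathring\calP}\Lambda^{k-1}(T)$, which establishes exactness at degree $k$.

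I expect the bookkeeping at the two extreme degrees to be the only delicate point. The decisive feature is that \eqref{math:exactnessresult} returns the potential in the reduced trimmed space: at $k=1$ the form $\eta\in\underline\calP_{r+1}^{-}\Lambda^{0}(T)$ automatically has vanishing mean and therefore lands in $\underline\calP\Lambda^{0}(T)$, not merely in $\calP\Lambda^{0}(T)$, which is exactly the normalization separating the reduced complex from \eqref{math:polydiffcomplex:absolute}; symmetrically, for the relative complex the hypothesis $\int_T\omega=0$ at $k=n$ is nothing but the membership $\omega\in\underline{\mathring\calP}\Lambda^{n}(T)$ needed to invoke \eqref{math:exactnessresult:bc}. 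One must also justify the order-drop at a trimmed degree $\calP(k)=\calP_s^{-}$, namely that a closed form in the trimmed space already lies in the untrimmed space of one lower order, so that the admissibility inclusion runs in the usable direction; this is the same fact that underlies the maximal-order choice in the previous lemma.
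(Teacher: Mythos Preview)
Your argument is correct but takes a different route from the paper. The paper's proof is essentially a one-line deduction from the preceding lemma: when $1_T\in\calP\Lambda^{0}(T)$ (respectively $\vol_T\in\mathring\calP\Lambda^{n}(T)$), the direct sum decompositions \eqref{math:simplex:polydiffform:decomposition} and \eqref{math:simplex:polydiffform:decomposition:bc} exhibit the reduced complex as the exact augmented complex with the copy of $\bbR$ split off at each end, which immediately yields exactness. You instead rerun the core argument of the previous lemma degree by degree, routing every appeal to exactness through the reduced statements \eqref{math:exactnessresult} and \eqref{math:exactnessresult:bc} so that the potential already lands in the underlined space. The paper's approach is shorter and avoids re-examining the interior degrees, while your approach is self-contained and uniform in $\calP$: it does not invoke the hypotheses $1_T\in\calP\Lambda^{0}(T)$ or $\vol_T\in\mathring\calP\Lambda^{n}(T)$ at all, and hence sidesteps the (trivial) edge case where those hypotheses fail and the entire complex is zero. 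Both approaches rest on the same underlying ingredient, namely the order-drop for closed trimmed forms that you correctly flag at the end.
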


\begin{proof}
 If $1_T \in \calP\Lambda^{0}(T)$, then $\calP\Lambda^{0}(T) = \bbR \cdot 1_T \oplus \underline\calP\Lambda^{0}(T)$,
 and 
 if $\vol_T \in \mathring\calP\Lambda^{n}(T)$, then $\calP\Lambda^{n}(T) = \bbR \cdot \vol_T \oplus \underline{\mathring\calP}\Lambda^{n}(T)$.
 The claim now follows immediately from the preceding result. 
\end{proof}

Now we direct our attention towards dual spaces and their representations.
This prepares the discussion of the degrees of freedom of finite element de~Rham complexes in Section~\ref{sec:hierarchycomplexes}. 
Our approach to the degrees of freedom differs from the approach in~\cite{AFW1} but bears resemblance to the approach in~\cite{deRhamHPFEM}.

Let $T$ be a simplex and let $g$ be any smooth Riemannian metric over $T$. 
This induces a positive definite bilinear form~\cite{FriAgri}
\begin{gather*}
 B_{g} : 
C^{\infty}\Lambda^{k}(T) \times C^{\infty}\Lambda^{k}(T) \rightarrow \bbR,
 \quad 
 ( \omega, \eta ) \mapsto \int_T \langle \omega, \eta \rangle_{g}.
\end{gather*}
The restriction of this bilinear form to any finite-dimensional subspace of $C^{\infty}\Lambda^{k}(T)$ 
gives a Hilbert space structure on that subspace.
We apply this idea to the spaces $\underline{\mathring\calP}\Lambda^{k}(T)$,
since this is the special case needed in later sections. 
The following lemma, however, generalizes to the spaces of the form $\calP\Lambda^{k}(T)$, $\mathring\calP\Lambda^{k}(T)$ and $\underline\calP\Lambda^{k}(T)$
with minimal changes.

\begin{lemma} \label{lemma:dofrepresentation:advanced}
 Let $\calP \in \scrA$. 
 Let $\Psi : \underline{\mathring\calP}\Lambda^{k}(T) \rightarrow \bbR$ be a linear functional. 
 Then there exist $\rho \in \underline{\mathring\calP}\Lambda^{k-1}(T)$ and $\beta \in \underline{\mathring\calP}\Lambda^{k}(T)$
 such that 
 \begin{gather*}
  \Psi(\omega)
  =
  \int_T \langle \omega, \cartan^{k-1} \rho \rangle_{g}
  +
  \int_T \langle \cartan^{k} \omega, \cartan^{k} \beta \rangle_{g},
  \quad 
  \omega \in \underline{\mathring\calP}\Lambda^{k}(T)
  .
 \end{gather*} 
\end{lemma}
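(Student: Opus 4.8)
The plan is to read off the representation from the finite-dimensional Hodge decomposition of the exact complex \eqref{math:polydiffcomplex:relative:reduced}, combined with the Riesz representation theorem, and then to rewrite the resulting two pieces in the advertised form.

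First I would fix $k$ and abbreviate $V^{j} := \underline{\mathring\calP}\Lambda^{j}(T)$ for every $j$, so that \eqref{math:polydiffcomplex:relative:reduced} reads $\cartan^{j} : V^{j} \to V^{j+1}$ and is exact by the preceding lemma. Each $V^{j}$ is a finite-dimensional Hilbert space under $B_{g}$, so every exterior derivative admits a $B_{g}$-adjoint $(\cartan^{j})^{\ast} : V^{j+1} \to V^{j}$, characterised by $B_{g}(\cartan^{j}\alpha,\eta) = B_{g}(\alpha,(\cartan^{j})^{\ast}\eta)$. Because the complex is exact, its harmonic spaces vanish, and the usual Hodge decomposition degenerates to the orthogonal splitting $V^{k} = \cartan^{k-1} V^{k-1} \oplus (\cartan^{k})^{\ast} V^{k+1}$, where the absent summand is understood to be $\nullspace$ when $k=0$ or $k=n$. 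Here I use that $\cartan^{k-1}V^{k-1}=\ker\cartan^{k}$ by exactness and that $(\ker\cartan^{k})^{\perp}=\Image((\cartan^{k})^{\ast})$ always.

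Next I would apply the Riesz representation theorem to obtain $\zeta \in V^{k}$ with $\Psi(\omega) = B_{g}(\omega,\zeta)$ for all $\omega \in V^{k}$, and decompose $\zeta = \cartan^{k-1}\rho + (\cartan^{k})^{\ast}\gamma$ with $\rho \in V^{k-1}$ and $\gamma \in V^{k+1}$. The first summand immediately contributes $\int_{T}\langle\omega,\cartan^{k-1}\rho\rangle_{g}$. To turn the second summand into $\int_{T}\langle\cartan^{k}\omega,\cartan^{k}\beta\rangle_{g} = B_{g}(\omega,(\cartan^{k})^{\ast}\cartan^{k}\beta)$, I would decompose $\gamma \in V^{k+1}$ by Hodge as $\gamma = \cartan^{k}\mu + (\cartan^{k+1})^{\ast}\nu$; since $\cartan^{k+1}\cartan^{k}=0$ gives $(\cartan^{k})^{\ast}(\cartan^{k+1})^{\ast}=0$, this yields $(\cartan^{k})^{\ast}\gamma = (\cartan^{k})^{\ast}\cartan^{k}\mu$. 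Setting $\beta := \mu \in V^{k}$ then gives $\zeta = \cartan^{k-1}\rho + (\cartan^{k})^{\ast}\cartan^{k}\beta$, and expanding $B_{g}(\omega,\zeta)$ produces exactly the two claimed terms.

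The only genuine obstacle is the vanishing of the harmonic part, that is, that no cohomology blocks writing $\zeta$ purely as $\cartan^{k-1}\rho + (\cartan^{k})^{\ast}\gamma$; this is precisely where the exactness of \eqref{math:polydiffcomplex:relative:reduced} from the previous lemma is indispensable (and it is also what makes the second Hodge decomposition, of $\gamma$, available). Everything else is routine finite-dimensional linear algebra, and the degenerate endpoints are handled automatically: for $k=0$ one has $V^{k-1}=\nullspace$, so $\rho$ drops out, while for $k=n$ one has $\cartan^{k}=0$, so $\beta$ drops out.
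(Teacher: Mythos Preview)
Your proof is correct and follows essentially the same approach as the paper: both use the Riesz representation on the finite-dimensional inner product space $\underline{\mathring\calP}\Lambda^{k}(T)$, split the resulting representer orthogonally into its closed part (which is exact by the exactness of \eqref{math:polydiffcomplex:relative:reduced}) and its complement, and then rewrite the complement via $\cartan^{k}$. The only cosmetic difference is that the paper obtains $\beta$ by a second application of Riesz for the inner product $B_{g}(\cartan^{k}\cdot,\cartan^{k}\cdot)$ on $(\ker\cartan^{k})^{\perp}$, whereas you obtain it by a second Hodge decomposition of $\gamma\in V^{k+1}$; these are equivalent finite-dimensional manoeuvres.
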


\begin{proof}
 Let $\Psi : \underline{\mathring\calP}\Lambda^{k}(T) \rightarrow \bbR$ be linear and let $\omega \in \underline{\mathring\calP}\Lambda^{k}(T)$ be arbitrary. 
 Since $B_g$ induces a Hilbert space structure on a finite-dimensional vector space, 
 the Riesz representation theorem ensures the existence of $\eta \in \underline{\mathring\calP}\Lambda^{k}(T)$
 such that $\Psi(\omega) = B_g(\omega,\eta)$. 
We write $A_0 = \underline{\mathring\calP}\Lambda^{k}(T) \cap \ker\cartan^{k}$ and let $A_1$ denote the orthogonal complement of $A_0$
 in $\underline{\mathring\calP}\Lambda^{k}(T)$ with respect to the scalar product $B_g$. 
 We have an orthogonal decomposition $\underline{\mathring\calP}\Lambda^{k}(T) = A_0 \oplus A_1$,
 and unique decomposition $\omega = \omega_0 + \omega_1$ and $\eta = \eta_0 + \eta_1$
 with $\omega_0, \eta_0 \in A_0$ and $\omega_1, \eta_1 \in A_1$.
 Thus 
 \begin{gather*}
  \Psi(\omega) 
  = 
  \int_T \langle\omega,\eta\rangle_{g} 
  =
  \int_T \langle\omega_{0},\eta_{0}\rangle_{g} + \int_T \langle\omega_{1},\eta_{1}\rangle_{g}
  .
 \end{gather*}
 By the exactness of~\eqref{math:polydiffcomplex:relative:reduced}
 there exists $\rho \in \underline{\mathring\calP}\Lambda^{k-1}(T)$ such that $\eta_0 = \cartan^{k-1} \rho$.
Since the bilinear form $B_{g}\left( \cartan^{k} \cdot, \cartan^{k} \cdot \right)$ is a scalar product 
 over the finite-dimensional space $A_1$, 
 we may use the Riesz representation theorem once again to obtain $\beta \in \underline{\mathring\calP}\Lambda^{k}(T)$ 
 with $B_{g}\left( \cartan^{k} \omega_1, \cartan^{k} \beta \right) = B_{g}\left( \omega_1, \eta_1 \right)$.
 The proof is complete. 
\end{proof}

\section{The Complex of Whitney Forms} \label{sec:whitneycomplex}

In the preceding section, we have studied finite element differential complexes over simplices. 
We now proceed to finite element differential complexes over triangulations.
We begin in this section with the special case of lowest order: the complexes of Whitney forms. 
An important concept is the canonical interpolator. 
\sectionintrolinebreak

Let $\calT$ be a simplicial complex.
This means that $\calT$ is a set of simplices 
\begin{subequations}\label{math:simplicialcomplex}
such that 
\begin{gather}
 \label{math:simplicialcomplex:uno}
 \forall T \in \calT : \forall F \in \Delta(T) : F \in \calT,
 \\
 \label{math:simplicialcomplex:duo}
 \forall T,T' \in \calT : T \cap T' \in \calT \cup \set \emptyset .
\end{gather}
\end{subequations}
In other words, the set of simplices $\calT$ is closed under taking subsimplices
and the intersection of two simplices in $\calT$ is either empty
or a common subsimplex.
We let $\calT^{k}$ denote the set of $k$-simplices in $\calT$.
The simplest example of a simplicial complex is the set of subsimplices $\Delta(T)$ of any simplex $T$. 
Other examples are triangulations of domains. 
A simplicial complex $\calU \subseteq \calT$ is called a simplicial subcomplex of $\calT$.
Note that $\calU = \emptyset$ is permissible. 
\\

For each triangulation, we have an associated simplicial chain complex. 
We recall that we assume the simplices in $\calT$ to be equipped with an arbitrary but fixed orientation.
The space $\calC_{k}(\calT)$ of simplicial $k$-chains is defined as the real vector space 
spanned by the oriented $k$-simplices in $\calT^{k}$.

We recall that the orientation of a simplex $T$ induces an orientation on its subsimplices of one dimension lower. 
When $T \in \calT^{k}$ and $F \in \calT^{k-1}$ with $F \in \Delta(T)$,
then we set $o(F,T) := 1$ if the fixed orientation over $T$ induces the fixed orientation over $F$,
and set $o(F,T) := -1$ in the opposite case. The \emph{simplicial boundary operator} is the linear operator 
\begin{align*}
 \partial_{k} : \calC_{k}(\calT) \rightarrow \calC_{k-1}(\calT) 
\end{align*}
that is defined by taking the linear extension of setting 
\begin{align*}
 \partial_{k} T := \sum_{ F \in \Delta(T)^{k-1} } o(F,T) F, \quad T \in \calT^{k}.
\end{align*}
This operator satisfies the differential property $\partial_{k-1} \partial_{k} = 0$. 
When $\calU \subseteq \calT$ is a simplicial subcomplex then we define the vector space $\calC_{k}(\calT,\calU)$ as the factor space 
\begin{align*}
 \calC_{k}(\calT,\calU) := \calC_{k}(\calT) / \calC_{k}(\calU).
\end{align*}
Note that $\calC_{k}(\calT,\emptyset) = \calC_{k}(\calT)$.
A canonical basis of $\calC_{k}(\calT,\calU)$
is given by (the equivalence classes of) the oriented $k$-simplices in $\calT^{k}$ which are not contained in $\calU^{k}$.
In particular, we can identify $\calC_{k}(\calT,\calU)$ with the subspace of $\calC_m(\calT)$ spanned by $\calT^{k} \setminus \calU^{k}$. 
The simplicial boundary operator induces a well-defined operator 
\begin{align*}
 \partial_{k} : \calC_{k}(\calT,\calU) \rightarrow \calC_{k-1}(\calT,\calU), 
\end{align*}
which again satisfies the differential property $\partial_{k-1} \partial_{k} = 0$.
Accordingly, we introduce the simplicial chain complex 
\begin{align}
 \label{math:chaincomplex}
 \begin{CD}
  \dots
  @<{\partial_{k-1}}<<
  \calC_{k-1}(\calT,\calU)
  @<{\partial_{k  }}<<
  \calC_{k  }(\calT,\calU)
  @<{\partial_{k+1}}<<
  \dots
 \end{CD}
\end{align}
The dimension of the $k$-th homology space of this complex, 
\begin{gather}
 \label{math:chaincohomology}
 b_{k}(\calT,\calU)
 :=
 \dim 
 \dfrac{
  \ker   \partial_{k  } : \calC_{k  }(\calT,\calU) \rightarrow \calC_{k-1}(\calT,\calU)
 }{
  \range \partial_{k+1} : \calC_{k+1}(\calT,\calU) \rightarrow \calC_{k}(\calT,\calU)
 }
 ,
\end{gather}
is known as the \emph{$k$-th simplicial Betti number} of $\calT$ relative to $\calU$.
We call $b_{k}(\calT) := b_{k}(\calT,\emptyset)$
just the \emph{$k$-th simplicial Betti number} of $\calT$.
\\

We now introduce differential forms into the discussion. We define \begin{align*}
C^{\infty}\Lambda^{k}(\calT)
 :=
 \left\{ \;
  (\omega_{T})_{T} \in \bigoplus_{ T \in \calT } C^{\infty}\Lambda^{k}(T)
  \;\suchthat*\; 
  \forall T \in \calT : 
  \forall F \in \Delta(T) : 
  \trace^{k}_{T,F} \omega_T = \omega_F
 \;\right\}
 .
\end{align*}
Via a linear algebraic isomorphism we may identify the space $C^{\infty}\Lambda^{k}(\calT)$ with the space of differential $k$-forms 
that are piecewise smooth with respect to $\calT$ 
and that have single-valued traces along simplex boundaries. 
Our choice of formalism will simplify the notation in what follows.
Henceforth, we may also write $\trace_{T} \omega := \omega_T$ for $\omega \in C^{\infty}\Lambda^{k}(\calT)$ and $T \in \calT$.

Because the exterior derivative commutes with trace operators, 
we have a well-defined exterior derivative on $C^{\infty}\Lambda^{k}(\calT)$ given by 
\begin{align}
 \label{math:smoothforms:cartan}
 \cartan^{k} : 
 C^{\infty}\Lambda^{k}(\calT) \rightarrow C^{\infty}\Lambda^{k+1}(\calT),
 \quad 
 (\omega_T)_{T \in \calT} \mapsto (\cartan^{k}\omega_T)_{T \in \calT}.
\end{align}
Since $\cartan^{k+1} \cartan^{k} \omega = 0$ for every $\omega \in C^{\infty}\Lambda^{k}(\calT)$,
we may compose a differential complex 
\begin{align}
 \label{math:smoothforms:diffcomplex}
 \begin{CD}
  \dots 
  @>\cartan^{k-1}>>
  C^{\infty}\Lambda^{k  }(\calT)
  @>\cartan^{k  }>>
  C^{\infty}\Lambda^{k+1}(\calT)
  @>\cartan^{k+1}>>
  \dots
 \end{CD}
\end{align}
In order to formalize boundary conditions, we furthermore define 
\begin{align}
 \label{math:smoothforms:boundaryconditions}
 C^{\infty}\Lambda^{k}(\calT,\calU)
 :=
 \left\{\; 
  \omega \in C^{\infty}\Lambda^{k}(\calT)
  \;\suchthat*\; 
  \forall F \in \calU : \omega_F = 0
 \;\right\}
 .
\end{align}
It is easily verified that 
\begin{align}
 \cartan^{k} \left( C^{\infty}\Lambda^{k}(\calT,\calU) \right) \subseteq C^{\infty}\Lambda^{k+1}(\calT,\calU).
\end{align}
In particular, we may compose the differential complex 
\begin{align}
 \label{math:smoothforms:diffcomplex:bc}
 \begin{CD}
  \dots 
  @>\cartan^{k-1}>>
  C^{\infty}\Lambda^{k  }(\calT,\calU)
  @>\cartan^{k  }>>
  C^{\infty}\Lambda^{k+1}(\calT,\calU)
  @>\cartan^{k+1}>>
  \dots
 \end{CD}
\end{align}
with abstract boundary conditions. 

\begin{remark} 
 Constructions similar to our definition of $C^{\infty}\Lambda^{k}(\calT)$ have appeared in mathematics before. 
 Our definition is a special case of a \emph{finite element system} in the terminology of~\cite{StructPresDisc}. 
 Another variant is exemplified by \emph{Sullivan forms} in global analysis~\cite{ducret2009lq},
 which are piecewise \emph{flat} differential forms in the sense of geometric measure theory. 
 
 For a practical illustration, suppose that $\Omega \subset \bbR^{n}$ is a bounded Lipschitz domain triangulated by a simplicial complex $\calT$.
 Then the members of $C^{\infty}\Lambda^{k}(\calT)$ correspond to the differential $k$-forms over $\Omega$
 that are piecewise smooth with respect to $\calT$ and have single-valued traces on subsimplices. 
 In addition, suppose that $\Gamma \subset \partial \Omega$ is a subset of the boundary
 and that $\calU$ is a simplicial subcomplex of $\calT$ triangulating $\Gamma$.
 Then $C^{\infty}\Lambda^{k}(\calT,\calU)$ is the subspace of $C^{\infty}\Lambda^{k}(\calT,\calU)$
 whose members have vanishing traces along $\Gamma$. 
 In this manner, $\calU$ may be used to model homogeneous boundary conditions appropriate for $\bfH(\curl)$ and $\bfH(\divergence)$ spaces.
\end{remark}

We investigate an important relation between the simplicial chains and the piecewise smooth differential forms with respect to $\calT$.
Suppose that $\omega \in C^{\infty}\Lambda^{k}(\calT,\calU)$ and $T \in \calT^{k} \setminus \calU^{k}$. We then write $\int_T \omega := \int_{T} \trace^{k}_{T} \omega_T$ for the integral of $\omega$ over $T$.
Taking the linear extension provides a bilinear pairing 
\begin{align}
 \label{math:bilinearpairing}
 C^{\infty}\Lambda^{k}(\calT,\calU)
 \times 
 \calC_{k}(\calT,\calU)
 \rightarrow 
 \bbR,
 \quad 
 (\omega,S) \rightarrow 
 \int_{S} \omega
 .
\end{align}
Moreover, we easily observe (by first considering a single simplex and then taking the linear extension) that 
\begin{gather*}
 \int_{ \partial_{k+1} S } \omega = \int_{S} \cartan^{k} \omega,
 \quad 
 \omega \in C^{\infty}\Lambda^{k}(\calT,\calU), \quad S \in \calC_{k+1}(\calT,\calU).
\end{gather*}
The linear pairing~\eqref{math:bilinearpairing} is degenerate in general: 
there exists $\omega \in C^{\infty}\Lambda^{k}(\calT,\calU)$ such that $\omega \neq 0$ but 
\begin{gather*}
 \int_{S} \omega = 0, \quad S \in \calC_{k}(\calT,\calU).
\end{gather*}
We will identify a differential subcomplex of~\eqref{math:smoothforms:diffcomplex:bc} 
restricting to which in the first variable, that is, $\omega$, makes the bilinear pairing~\eqref{math:bilinearpairing} non-degenerate. 
Specifically, we employ a finite element de~Rham complex of lowest polynomial order.
To begin with, we define the spaces of \emph{Whitney forms} by 
\begin{align*}
 \calW\Lambda^{k}(\calT)
 &:=
 \left\{ \;
  \omega \in C^{\infty}\Lambda^{k}(\calT)
  \;\suchthat*\; 
  \forall T \in \calU : \omega_T \in \calP_{1}^{-}\Lambda^{k}(T)
 \; \right\},
 \\
 \calW\Lambda^{k}(\calT,\calU)
 &:=
 \calW\Lambda^{k}(\calT) 
 \cap
 C^{\infty}\Lambda^{k}(\calT,\calU).
\end{align*}
It is an immediate consequence of definitions that we have a well-defined operator 
\begin{align*}
 \cartan^{k  } : \calW\Lambda^{k}(\calT,\calU) \rightarrow \calW\Lambda^{k+1}(\calT,\calU),
\end{align*}
and consequently the \emph{differential complex of Whitney forms} 
\begin{align}
 \label{math:whitneyformcomplex}
  \begin{CD}
  \dots 
  @>\cartan^{k-1}>>
  \calW\Lambda^{k  }(\calT,\calU)
  @>\cartan^{k  }>>
  \calW\Lambda^{k+1}(\calT,\calU)
  @>\cartan^{k+1}>>
  \dots
 \end{CD}
\end{align}
The notion of Whitney forms was originally motivated by their duality to the simplicial chains. 
This is summarized in the following theorem, which has been proven many times~\cite{christiansen2008construction,StructPresDisc}. 

\begin{theorem} \label{theorem:derhampairing:whitney}
 The bilinear pairing 
 \begin{align}
  \label{math:derhampairing:whitney}
  \calW\Lambda^{k}(\calT,\calU)
  \times
  \calC_k(\calT,\calU)
  \rightarrow
  \bbR,
  \quad
  (\omega,S)
  \mapsto
  \int_S \trace_{S}^{k} \omega
\end{align}
 satisfies 
 \begin{gather*}
  \forall \omega \in \calW\Lambda^{k}(\calT,\calU) : 
  \exists S \in \calC_k(\calT,\calU) : 
  \int_{S} \omega \neq 0, 
  \\ 
  \forall S \in \calC_k(\calT,\calU) : 
  \exists \omega \in \calW\Lambda^{k}(\calT,\calU) : 
  \int_{S} \omega \neq 0, 
 \end{gather*}
 that is, it is non-degenerate. \qed
\end{theorem}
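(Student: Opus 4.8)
The plan is to produce an explicit basis of $\calW\Lambda^{k}(\calT,\calU)$ that is \emph{dual} to the canonical basis of $\calC_{k}(\calT,\calU)$, which reduces nondegeneracy of the pairing to the observation that the resulting matrix of pairings is the identity. Everything rests on two classical properties of the local barycentric Whitney forms $\phi_{\rho}^{T}$, which I would establish first by elementary computation on a single simplex.

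First I would prove a \emph{trace-compatibility} property: for $F\in\Delta(T)$ and a $k$-subsimplex $\rho$ of $T$, the trace $\trace_{T,F}^{k}\phi_{\rho}^{T}$ equals $\phi_{\rho}^{F}$ if $\rho$ is a subsimplex of $F$ and vanishes otherwise. This follows because $\lambda_{i}^{T}$ restricts to $\lambda_{i}^{F}$ when the $i$-th vertex of $T$ lies in $F$ and to the zero function otherwise: if some vertex of $\rho$ is missing from $F$, then in every summand of $\phi_{\rho}^{T}$ that vertex either kills the barycentric coefficient $\lambda_{\rho(i)}^{T}$ or contributes a factor $\cartan\lambda_{\rho(j)}^{T}$ whose trace is zero. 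Second, I would record the \emph{unisolvence} identity $\int_{\sigma}\trace_{T,\sigma}^{k}\phi_{\rho}^{T}=\delta_{\rho\sigma}$ for $k$-subsimplices $\rho,\sigma$ of $T$, with matching orientations: by trace-compatibility the integrand vanishes unless $\rho=\sigma$, and the normalization over $\sigma$ itself is the standard elementary computation. Since the $\phi_{\rho}^{T}$ are known to form a basis of $\calP_{1}^{-}\Lambda^{k}(T)$, this exhibits the integration functionals over the $k$-subsimplices as the dual basis.

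Next I would assemble \emph{global} Whitney forms. For each $\sigma\in\calT^{k}$ I define $\phi_{\sigma}\in C^{\infty}\Lambda^{k}(\calT)$ by $\trace_{T}\phi_{\sigma}:=\phi_{\sigma}^{T}$ whenever $\sigma\in\Delta(T)$ and $\trace_{T}\phi_{\sigma}:=0$ otherwise; trace-compatibility is exactly what makes these local pieces agree on shared subsimplices, so $\phi_{\sigma}$ is well defined and lies in $\calW\Lambda^{k}(\calT)$. Unisolvence gives $\int_{F}\phi_{\sigma}=\delta_{F\sigma}$ for all $F\in\calT^{k}$, so $\{\phi_{\sigma}\}_{\sigma\in\calT^{k}}$ is linearly independent; it spans because any $\omega\in\calW\Lambda^{k}(\calT)$ agrees with $\sum_{\sigma}\bigl(\int_{\sigma}\omega\bigr)\phi_{\sigma}$ on every simplex (both sides lie locally in $\calP_{1}^{-}\Lambda^{k}(T)$ and have the same integrals over all $k$-subsimplices, hence coincide by local unisolvence). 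Using that $\calU$ is a subcomplex, I would then check that $\phi_{\sigma}\in\calW\Lambda^{k}(\calT,\calU)$ precisely when $\sigma\notin\calU$, and that these form a basis of $\calW\Lambda^{k}(\calT,\calU)$ indexed by $\calT^{k}\setminus\calU^{k}$.

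Finally the conclusion is immediate: the canonical basis of $\calC_{k}(\calT,\calU)$ consists of the classes of the $k$-simplices in $\calT^{k}\setminus\calU^{k}$, and $\int_{\tau}\phi_{\sigma}=\delta_{\sigma\tau}$ shows that the matrix of the pairing in these two bases is the identity, hence invertible; nondegeneracy in both arguments follows at once, since $\omega=\sum c_{\sigma}\phi_{\sigma}$ pairs to zero with every chain only if each $c_{\sigma}=\int_{\sigma}\omega=0$, and symmetrically for chains. I expect the main obstacle to be the pair of local facts -- trace-compatibility and unisolvence with the correct orientation signs; once the integration-over-$k$-subsimplices functionals are identified as the dual basis of the local Whitney basis, the global statement is pure bookkeeping. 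Keeping the orientation convention $o(F,T)$ consistent between the boundary operator and the traces is the one place where care is genuinely needed.
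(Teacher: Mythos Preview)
Your argument is correct and is precisely the classical construction of the global Whitney basis dual to the simplicial chains. Note, however, that the paper does not actually supply its own proof of this theorem: it is stated with a terminal \qed\ and a remark that the result ``has been proven many times,'' together with citations to \cite{christiansen2008construction,StructPresDisc}. So there is no in-paper proof to compare against; what you have written is essentially the argument one finds in those references (and, ultimately, in Whitney's original treatment), namely: establish the local trace-compatibility and unisolvence identities $\int_{\sigma}\trace^{k}\phi_{\rho}=\delta_{\rho\sigma}$, glue to obtain the global Whitney forms $\phi_{\sigma}$, and read off that the pairing matrix in the bases $\{\phi_{\sigma}\}_{\sigma\in\calT^{k}\setminus\calU^{k}}$ and $\{\sigma\}_{\sigma\in\calT^{k}\setminus\calU^{k}}$ is the identity.

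One small remark on presentation: your step ``$\phi_{\sigma}\in\calW\Lambda^{k}(\calT,\calU)$ precisely when $\sigma\notin\calU$'' uses that $\calU$ is a subcomplex in the direction $\sigma\notin\calU\Rightarrow\phi_{\sigma}\in\calW\Lambda^{k}(\calT,\calU)$ (if $\sigma\in\Delta(F)$ for some $F\in\calU$ then $\sigma\in\calU$ by closure under subsimplices); it would be worth making that inference explicit. Your caution about orientation conventions is well placed but harmless here, since the identity matrix conclusion only requires that each $\phi_{\sigma}$ be normalized consistently with the fixed orientation of $\sigma$.
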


As a consequence of Theorem~\ref{theorem:derhampairing:whitney} we obtain a linear isomorphism 
between $\calC^{k}(\calT,\calU)$ and the dual space of $\calW\Lambda^{k}(\calT,\calU)$,
\begin{align*}
 \calC_k(\calT,\calU)
 \simeq 
 \calW\Lambda^{k}(\calT,\calU)'
 .
\end{align*}
In particular, the differential complex of simplicial chains~\eqref{math:chaincomplex}
is isomorphic to the dual complex of the complex of Whitney forms~\eqref{math:whitneyformcomplex},
and the simplicial boundary operator $\partial_{k+1} : \calC_{k+1}(\calT,\calU) \rightarrow \calC_{k}(\calT,\calU)$
is isomorphic to the dual operator of $\cartan^{k} : \calW\Lambda^{k}(\calT,\calU) \rightarrow \calW\Lambda^{k+1}(\calT,\calU)$.
It follows that the cohomology spaces of the complex of Whitney forms~\eqref{math:whitneyformcomplex}
have the same dimension as 
the cohomology spaces as the corresponding cohomology spaces of the simplicial chain complex~\eqref{math:chaincomplex}.
This dimension is precisely the simplicial Betti number $b_k(\calT,\calU)$.
In summary, 
\begin{gather}
 \label{math:whitneycohomology}
 \dim 
 \dfrac{
  \ker   \cartan^{k  } : \calW\Lambda^{k  }(\calT,\calU) \rightarrow \calW\Lambda^{k+1}(\calT,\calU)
 }{
  \range \cartan^{k-1} : \calW\Lambda^{k-1}(\calT,\calU) \rightarrow \calW\Lambda^{k  }(\calT,\calU)
 }
 =
 b_{k}(\calT,\calU)
 ,
\end{gather}
as follows from~\eqref{math:chaincohomology}.

We are now in a position to provide the canonical finite element interpolator
from the space $C^{\infty}\Lambda^{k}(\calT)$ onto the space $\calW\Lambda^{k}(\calT)$.
We define 
\begin{align}
 I_{\calW}^{k} : 
 C^{\infty}\Lambda^{k}(\calT)
 \rightarrow
 \calW\Lambda^{k}(\calT)
\end{align}
by setting 
\begin{align*}
 \int_{S} I_{\calW}^{k} \omega 
 =
 \int_{S} \omega,
 \quad 
 \omega \in C^{\infty}\Lambda^{k}(\calT),
 \quad 
 S \in \calC_{k}(\calT)
 .
\end{align*}
This is well-defined because of Theorem~\ref{theorem:derhampairing:whitney}. 
This is the identity on Whitney forms, i.e.\ 
\begin{align*}
 I_{\calW}^{k} \omega = \omega,
 \quad 
 \omega \in \calW\Lambda^{k}(\calT)
 .
\end{align*}
The operator $I^{k}_{\calW}$ is local in the sense that for every $C \in \calT$ we have 
\begin{align*}
 \omega_{C} = 0 \implies ( I_{\calW}^{k} \omega )_C = 0.
\end{align*}
By restricting the interpolant to $C^{\infty}\Lambda^{k}(\calT,\calU)$ we obtain a well-defined mapping 
\begin{align*}
 I_{\calW}^{k} : 
 C^{\infty}\Lambda^{k}(\calT,\calU)
 \rightarrow
 \calW\Lambda^{k}(\calT,\calU)
 .
\end{align*}
The interpolation operator commutes with the exterior derivative,
\begin{align*}
 \cartan^{k} I_{\calW}^{k} \omega 
 =
 I_{\calW}^{k+1} \cartan^{k} \omega,
 \quad 
 \omega \in C^{\infty}\Lambda^{k}(\calT), 
\end{align*}
as we verify via 
\begin{align*}
 \int_{S} I_{\calW}^{k+1} \cartan^{k} \omega 
 =
 \int_{S} \cartan^{k} \omega
 =
 \int_{\partial_{k+1} S} \omega
 =
 \int_{\partial_{k+1} S} I_{\calW}^{k} \omega
 =
 \int_{S} \cartan^{k} I_{\calW}^{k} \omega
\end{align*}
for $S \in \calC_{k+1}(\calT)$ and $\omega \in C^{\infty}\Lambda^{k}(\calT)$.
So the diagram
\begin{align*}
 \begin{CD}
  \dots 
  @>{\cartan^{k-1}}>>
  C^{\infty}\Lambda^{k  }(\calT,\calU)
  @>{\cartan^{k  }}>>
  C^{\infty}\Lambda^{k+1}(\calT,\calU)
  @>{\cartan^{k+1}}>>
  \dots
  \\
  @.
  @V{I_{\calW}^{k  }}VV
  @V{I_{\calW}^{k+1}}VV
  @.
  \\
  \dots 
  @>{\cartan^{k-1}}>>
  \calW\Lambda^{k  }(\calT,\calU)
  @>{\cartan^{k  }}>>
  \calW\Lambda^{k+1}(\calT,\calU)
  @>{\cartan^{k+1}}>>
  \dots
 \end{CD}
\end{align*}
commutes. 
In particular, $I^{k}_{\calW}$ is a morphism of differential complexes.

i

\section{Higher Order Finite Element Complexes} \label{sec:hierarchycomplexes}

In this section, we study the structure of higher-order finite element differential complexes over triangulations.
We also construct a global interpolant. 
This combines the theoretical preparations carried out in Section~\ref{sec:simplexcomplexes} and Section~\ref{sec:whitneycomplex}.
\sectionintrolinebreak

Let $\calT$ be a simplicial complex and let $\calU$ be a (possibly empty) subcomplex of $\calT$.
We extend our framework of spaces types and admissible sequence types as follows. 
We let $\calP : \calT \rightarrow \scrA$ be a mapping 
that associates to each simplex $T \in \calT$ an admissible sequence type $\calP_T : \bbZ \rightarrow \scrA$.
We then define 
\begin{align}
 \label{math:piecewisepolydiffform}
 \calP\Lambda^{k}(\calT)
 :=
 \left\{\;
  \omega \in C^{\infty}\Lambda^{k}(\calT)
  \;\suchthat*\; 
  \forall T \in \calT : 
  \omega_T \in \calP_{T}\Lambda^{k}(T)
 \;\right\}
 .
\end{align}
By construction, the exterior derivative preserves this class of differential forms, 
\begin{align}
 \label{math:piecewisepolydiffform:cartan}
 \cartan^{k} \calP\Lambda^{k}(\calT) \subseteq \calP\Lambda^{k+1}(\calT)
 .
\end{align}
In particular, we have a differential complex 
\begin{align}
 \label{math:finiteelementcomplex}
 \begin{CD}
  \dots 
  @>\cartan^{k-1}>>
  \calP\Lambda^{k  }(\calT)
  @>\cartan^{k  }>>
  \calP\Lambda^{k+1}(\calT)
  @>\cartan^{k+1}>>
  \dots 
 \end{CD}
\end{align}
Having associated an admissible sequence type $\calP_T$ to each $T \in \calT$,
we say that the \emph{hierarchy condition holds} 
if \begin{align} \label{math:hierarchycondition}
 \forall T \in \calT : \forall F \in \Delta(T) : \calP_F \leq \calP_T.
\end{align}
We assume the hierarchy condition throughout this section. 
In order to simplify the notation, we will write $\calP\Lambda^{k}(T) := \calP_T\Lambda^{k}(T)$
from here on. 

\begin{remark}
 The general idea of the hierarchy condition is that the polynomial order associated to a simplex
 is at least the polynomial order associated to any subsimplex.
 Imposing such a condition is common in the literature on $hp$ finite element methods~\cite{demkowicz2006computing}.
 Indeed, if $(\calP_T)_{T \in \calT}$ violates the hierarchy condition, 
 then there exists a family of sequence types $(\calS_T)_{T \in \calT}$ 
 that satisfies the hierarchy condition and yields the same space $\calP\Lambda^{k}(\calT)$.
 This is analogous to what is called \emph{minimum rule} in~\cite{deRhamHPFEM}. 
\end{remark}

The geometric decomposition of finite element spaces is a concept of paramount importance. 
To have geometric decompositions at our disposal, we make the additional assumption 
that we are given extension operators between finite element spaces over simplices. 
Specifically, we assume to have linear \emph{local extension operators} 
\begin{align}
 \label{math:localextension}
 \ext^{k}_{F,T} :
 \underline{\mathring\calP}\Lambda^{k}(F)
 \rightarrow 
 {\calP}\Lambda^{k}(T)
\end{align}
for every pair $F \in \Delta(T)$ with $T \in \calT$,
satisfying the following conditions:
for all $F \in \calT$ we have
\begin{subequations}
\label{math:localextension:properties}
\begin{align}
 \label{math:localextension:identity}
 \ext^{k}_{F,F} \omega = \omega,
 \quad 
 \omega \in \underline{\mathring\calP}\Lambda^{k}(F)
 ,
\end{align}
for all $T \in \calT$ with $F \in \Delta(T)$ and $f \in \Delta(F)$, we have 
\begin{align}
 \label{math:localextension:consistent}
 \trace^{k}_{T,F} \ext^{k}_{f,T} = \ext^{k}_{f,F}
\end{align}
and for all $T \in \calT$ with $F,G \in \Delta(T)$ but $F \notin \Delta(G)$, we have 
\begin{align}
 \label{math:localextension:locality}
 \trace^{k}_{T,G} \ext^{k}_{F,T} = 0,
\end{align}
\end{subequations}
Provided such a family of extension operators, 
for each $F \in \calT$ we then define the associated \emph{global extension operator},
\begin{align}
 \label{math:globalextension}
 \Ext_{F}^{k} :
 \underline{\mathring\calP}\Lambda^{k}(F)
 \rightarrow 
 C^{\infty}\Lambda^{k}(\calT),
 \quad 
 \mathring\omega 
 \mapsto 
 \bigoplus_{\substack{ T \in \calT\\ F \in \Delta(T) }} 
 \ext^{k}_{F,T} \mathring\omega 
 .
\end{align}
It follows from~\eqref{math:localextension:consistent} that this mapping indeed takes values in $C^{\infty}\Lambda^{k}(\calT)$.
Moreover, 
\begin{align}
 \label{math:globalextension:finiteelementrange}
 \Ext_{F}^{k} \left( \underline{\mathring\calP}\Lambda^{k}(F) \right)
 \subseteq
 \calP\Lambda^{k}(\calT)
 .
\end{align}
We note that $\Ext_{F}^{k} \omega$ for $\omega \in \underline{\mathring\calP}\Lambda^{k}(F)$ 
vanishes on all simplices of $\calT$ that do not contain $F$ as a subsimplex. 

\begin{example}
 \label{example:extensionoperators}
 Extension operators $\ext^{k}_{F,T}$ with these properties are known in the literature~\cite{afwgeodecomp}. 
 We distinguish whether
 $\underline{\mathring\calP}\Lambda^{k}(F) = \underline{\mathring\calP}_{r}\Lambda^{k}(F)$
 or 
 $\underline{\mathring\calP}\Lambda^{k}(F) = \underline{\mathring\calP}^{-}_{r}\Lambda^{k}(F)$
 for some $r \in \bbN$. 
 What follows is a brief outline. 
 
 Let $T \in \calT^{n}$ and $F \in \Delta(T)^{m}$, 
 and let $\{ v_0^{T}, \dots, v_n^{T} \}$ and $\{ v_0^{F}, \dots, v_m^{F} \}$
 be the respective set of vertices. 
Given $\alpha \in A(m)$,
 we let $\alpha_{F,T} \in A(n)$
 be uniquely defined by requiring $\alpha_{F,T}(j) = \alpha(i)$ if $v^{T}_{j} = v^{F}_{i}$ and requiring $\alpha_{F,T}(j) = 0$ otherwise
 for all $j \in \{ 0, \dots, n \}$ and $i \in \{ 0, \dots, m \}$.  
Furthermore,
 given $\sigma \in \Sigma(a:k,m)$ 
 we let $\sigma_{F,T} \in \Sigma(a:k,n)$
 be uniquely defined by $v^{T}_{\sigma_{F,T}} = v^{F}_{\sigma(i)}$ for $a \leq i \leq k$. 
 
 Now, on the one hand, there exists a well-defined linear operator 
 \begin{gather*}
  \ext_{F,T}^{r,k,-} : \calP_{r}^{-}\Lambda^{k}(F) \rightarrow \calP_{r}^{-}\Lambda^{k}(T)
 \end{gather*}
 which is uniquely defined by 
 \begin{gather*}
  \ext_{F,T}^{r,k,-} \lambda^{\alpha}_{F} \phi^{F}_{\rho} = \lambda^{\alpha_{F,T}}_{T} \phi^{T}_{\rho_{F,T}},
  \quad 
  \alpha \in A(r-1,m), \quad \rho \in \Sigma(0:k,0:m).
 \end{gather*}
 The restriction of $\ext_{F,T}^{r,k,-}$ to $\underline{\mathring\calP}^{-}_{r}\Lambda^{k}(F)$
 provides the required mapping. 
 On the other hand, there exists a well-defined linear operator 
 \begin{gather*}
  \ext_{F,T}^{r,k} : \calP_{r}\Lambda^{k}(F) \rightarrow \calP_{r}\Lambda^{k}(T)
 \end{gather*}
 which is uniquely defined by  
 \begin{gather*}
  \ext_{F,T}^{r,k} \lambda^{\alpha}_{F} \cartan\lambda^{F}_{\sigma}
  =
  \lambda^{\alpha_{F,T}}_{T} \Psi^{\alpha,F,T}_{\sigma_{F,T}},
  \quad 
  \alpha \in A(r,m), \quad \sigma \in \Sigma(1:k,0:m),
 \end{gather*}
 where we have used 
 \begin{gather*}
  \Psi^{\alpha,F,T}_{\sigma_{F,T}}
  :=
  \Psi^{\alpha,F,T}_{\sigma_{F,T}(1)}
  \wedge 
  \dots 
  \wedge 
  \Psi^{\alpha,F,T}_{\sigma_{F,T}(k)},
  \\
  \Psi^{\alpha,F,T}_{i}
  :=
  \cartan\lambda^{T}_{i} - \dfrac{\alpha_{F,T}(i)}{r} \sum_{j = 0}^{m} \cartan\lambda^{T}_{\imath_{F,T}(j)},
  \quad 
  0 \leq i \leq n.
\end{gather*}
 The restriction of $\ext_{F,T}^{r,k}$ to $\underline{\mathring\calP}_{r}\Lambda^{k}(F)$
 provides the required mapping. 
\end{example}

The local extension operators enable formalizing the geometric decomposition of $\calP\Lambda^{k}(\calT,\calU)$.
The hierarchy condition is crucial for this endeavor.

Consider $\omega \in \calP\Lambda^{k}(\calT)$.
We define $\omega^{\calW} \in \calP\Lambda^{k}(\calT)$ by 
\begin{align}
 \label{math:geodecomp:whitneypart}
 \omega^{\calW}
 := 
 \sum_{ F \in \calT^{k}} \vol(F)^{\inv} \left( \int_{F} \trace^{k}_{F} \omega \right) \Ext_{F}^{k} \vol_F.
\end{align}
We then define recursively for every $m \in \{ k, \dots, n \}$:
\begin{align}
 \label{math:geodecomp:simplexpart}
 \mathring\omega_{F}
 &:= 
 \trace^{k}_{F}
 \left( 
  \omega - \omega^{\calW} - \sum_{l = k}^{m-1} \omega^{l} 
 \right),
 \quad 
 F \in \calT^{m}, 
 \\
 \label{math:geodecomp:levelpart}
 \omega^{m}
 &:=
 \sum_{ F \in \calT^{m}} 
 \Ext_{F}^{k} \mathring\omega_{F}.
\end{align}
The following theorem shows that these definitions are well-defined and give a decomposition of $\omega$.

\begin{example}
 We give an example of the above construction in the case 
 of the higher-order Lagrange space $\calP_{r}(T)$ over a triangle $T$,
 which is spanned by the barycentric monomials 
 $\{ \lambda_{T}^{\alpha} \}_{ \alpha \in A(r,n) }$ of order $r$. 
 Let $\omega \in \calP_{r}(T)$. 
 Taking point evaluations at the vertices $v^{T}_{0}, v^{T}_{1}, v^{T}_{2}$ of $T$, 
 we define 
 \begin{align*}
  \omega^{\calW}
  = 
  \omega(v^{T}_0) \lambda^{T}_{0} + \omega(v^{T}_1) \lambda^{T}_{1} + \omega(v^{T}_2) \lambda^{T}_{2}.
 \end{align*}
 In particular, $\omega^{\calW}$ is spanned by the basis functions associated to vertices.
 Furthermore, for the three edges $e^{T}_{01}, e^{T}_{02}, e^{T}_{12}$ of $T$ we define 
 \begin{align*}
  \mathring\omega_{e^{T}_{ij}}
  = 
  \trace_{e^{T}_{ij}}
  \left( 
   \omega - \omega^{\calW}
  \right),
  \quad 
  0 \leq i < j \leq 2,
  \\
  \omega^{1}
  =
  \Ext_{e^{T}_{01}} \mathring\omega_{e^{T}_{01}}
  + 
  \Ext_{e^{T}_{02}} \mathring\omega_{e^{T}_{02}}
  + 
  \Ext_{e^{T}_{12}} \mathring\omega_{e^{T}_{12}}
  .
 \end{align*}
 Here, the extension operators map the ``bubble functions'' over the edges to functions over the whole of $T$
 such that the extensions have vanishing traces along the other edges.
 In particular, $\omega^{1}$ is spanned by the basis functions associated to edges.
 Lastly, 
 \begin{align*}
  \omega^{2}
  =
  \mathring\omega_{T}
  = 
  \omega - \omega^{\calW} - \omega^{1} 
 \end{align*}
 is the span of the basis functions associated to the triangle itself. 
 We see that $\omega = \omega^{\calW} + \omega^{1} + \omega^{2}$
 is a decomposition into ``bubbles'' associated to vertices, edges, and the triangle. 
\end{example}

\begin{theorem} \label{theorem:geometricdecomposition}
 Let $\omega \in \calP\Lambda^{k}(\calT)$.
 Then we have $\mathring\omega_{F} \in \underline{\mathring\calP}\Lambda^{k}(F)$
 for every $F \in \calT$
 and 
 \begin{align}
  \label{math:geometricdecomposition}
  \omega 
  =
  \omega^{\calW} 
  +
  \sum_{ m = k }^{n} \omega^{m} 
  .
 \end{align}
\end{theorem}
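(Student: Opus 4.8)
The plan is to prove the two claims in tandem by induction on $m$, exploiting the hierarchy condition at each step to guarantee that the traces we compute actually land in the correct spaces with boundary conditions. The key observation is that the construction~\eqref{math:geodecomp:whitneypart}--\eqref{math:geodecomp:levelpart} is engineered so that after subtracting $\omega^{\calW}$ and the lower-level corrections $\omega^{l}$ for $l < m$, the remaining trace on an $m$-simplex $F$ has vanishing traces on all proper subsimplices of $F$. First I would verify the base of the decomposition: the Whitney part $\omega^{\calW}$ reproduces the correct ``moments'' $\int_F \trace^k_F \omega$ on every $k$-simplex $F$, so that $\trace^k_F(\omega - \omega^{\calW})$ integrates to zero over $F$. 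This is exactly the condition that $\mathring\omega_F \in \underline{\mathring\calP}\Lambda^k(F)$ demands in the critical top-degree case (recall the underline notation removes the volume-form component precisely when $k = \dim F$), using the locality property~\eqref{math:localextension:locality} so that $\Ext^k_G \vol_G$ contributes nothing to the trace on a different $k$-simplex $F$.

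The inductive step is where the hierarchy condition does the real work. Suppose the claim holds for all levels below $m$, so that $\omega^l$ for $k \le l < m$ is a well-defined element of $\calP\Lambda^k(\calT)$ built from genuine boundary-condition-carrying pieces $\mathring\omega_G \in \underline{\mathring\calP}\Lambda^k(G)$. Fix $F \in \calT^m$ and set $\mathring\omega_F := \trace^k_F(\omega - \omega^{\calW} - \sum_{l=k}^{m-1}\omega^l)$. I would argue in two parts. First, that $\mathring\omega_F$ lies in $\calP\Lambda^k(F)$: each summand traces into $\calP\Lambda^k(F)$ because $\calP_G \le \calP_F$ for $G \in \Delta(F)$ by~\eqref{math:hierarchycondition}, and the extension operators have range in the appropriate local space~\eqref{math:globalextension:finiteelementrange} compatibly with traces via~\eqref{math:localextension:consistent}. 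Second, that $\mathring\omega_F$ has vanishing traces on every proper subsimplex $f \in \Delta(F)$ with $f \neq F$: here one computes $\trace^k_{F,f}\mathring\omega_F$ and checks that the lower-level corrections $\omega^l$ for $l = \dim f$ were precisely designed to cancel $\trace^k_f(\omega - \omega^{\calW} - \sum_{l<\dim f}\omega^l)$ on $f$, using the consistency~\eqref{math:localextension:consistent} and locality~\eqref{math:localextension:locality} of the extensions together with the identity property~\eqref{math:localextension:identity}. Combining the two parts gives $\mathring\omega_F \in \mathring\calP\Lambda^k(F)$, and in the top case $m = n$, $k = n$ the mean-value subtraction from $\omega^{\calW}$ upgrades this to the underlined space.

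The telescoping identity~\eqref{math:geometricdecomposition} then follows almost formally: on each simplex $C \in \calT^n$ of top dimension, restricting the equation $\omega = \omega^{\calW} + \sum_{m=k}^n \omega^m$ via $\trace^k_C$ and unwinding the recursive definition~\eqref{math:geodecomp:simplexpart} shows that $\omega^n$ absorbs exactly the residual $\trace^k_C(\omega - \omega^{\calW} - \sum_{l<n}\omega^l)$, while on lower-dimensional simplices the sum terminates earlier with the same bookkeeping. Because every $\omega \in \calP\Lambda^k(\calT)$ is determined by its traces on top-dimensional simplices and these traces agree by construction, equality holds globally in $\calP\Lambda^k(\calT)$.

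\emph{The main obstacle} I anticipate is the careful verification that $\trace^k_{F,f}\mathring\omega_F = 0$ for proper faces $f \subsetneq F$ — specifically, showing that the contribution of the level-$\dim f$ term $\omega^{\dim f}$ restricted to $f$ precisely reproduces $\trace^k_f(\omega - \omega^{\calW} - \sum_{l < \dim f}\omega^l)$, so that it cancels in $\mathring\omega_F$. This requires untangling the interaction between~\eqref{math:localextension:consistent} (which propagates an extension from a sub-face up to $F$ and then traces back down to $f$) and~\eqref{math:localextension:locality} (which kills extensions from faces not containing $f$), and tracking which of the $\Ext^k_G\mathring\omega_G$ summands in $\omega^{\dim f}$ survive the trace $\trace^k_{F,f}$. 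Keeping the indexing of sub-faces and their extensions straight, rather than any deep analytic difficulty, is the crux.
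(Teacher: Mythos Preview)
Your proposal is correct and follows essentially the same approach as the paper: an induction on the level $m$, showing at each stage that the residual $\omega - \omega^{\calW} - \sum_{l=k}^{m} \omega^{l}$ has vanishing trace on every $m$-simplex, which forces $\mathring\omega_F \in \mathring\calP\Lambda^{k}(F)$ (and hence $\underline{\mathring\calP}\Lambda^{k}(F)$, the underline being nontrivial only at the base level $m=k$) for $F \in \calT^{m+1}$. Your anticipated obstacle about verifying $\trace^{k}_{F,f}\mathring\omega_F = 0$ for proper faces is handled exactly as you describe, and the paper streamlines this by noting that vanishing on the $m$-dimensional facets of $F$ automatically propagates to all lower-dimensional faces via the factorization of traces.
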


\begin{proof}
 By construction of $\omega^{\calW}$, we have 
 \begin{gather*}
  \int_{F} \trace^{k}_{F} \omega^{\calW} = \int_{F} \trace^{k}_{F} \omega, \quad F \in \calT^{k}.
 \end{gather*}
 By definition, $\trace_{F}^{k} \left( \omega - \omega^{\calW} \right) \in \underline{\mathring\calP}\Lambda^{k}(F)$ for every $F \in \calT^{k}$.
 With $\omega^{k}$ as defined above, we see  
 \begin{gather*}
  \trace^{k}_{F} \left( \omega - \omega^{\calW} - \omega^{k} \right) = 0,
  \quad F \in \calT^{k}.
 \end{gather*}
 Let us now suppose that for some $m \in \{ k, \dots, n-1 \}$ we have shown 
 \begin{align*}
  \trace^{k}_{f}
  \left( 
   \omega 
   -
   \omega^{\calW} 
   -
   \sum_{ l = k }^{m} 
   \omega^{l}
  \right)
  = 
  0,
  \quad 
  f \in \calT^{m}.
 \end{align*}
 By definition we have $\underline{\mathring\calP}\Lambda^{k}(F) = {\mathring\calP}\Lambda^{k}(F)$ for $F \in \calT^{m+1}$,
 and $\mathring\omega_{F} \in {\mathring\calP}\Lambda^{k}(F)$ for $F \in \calT^{m+1}$. 
We conclude that $\omega^{m+1}$ is well-defined and that  
 \begin{align*}
  \trace^{k}_{F}
  \left( 
   \omega 
   -
   \omega^{\calW} 
   -
   \sum_{ l = k }^{m+1} 
   \omega^{l}
  \right)
  = 
  0,
  \quad 
  F \in \calT^{m+1}.
 \end{align*}
 An induction argument then provides~\eqref{math:geometricdecomposition}.
 The proof is complete. 
\end{proof}

\begin{lemma} \label{lemma:geometricdecomposition:zerocondition}
 Let $\omega \in \calP\Lambda^{k}(\calT)$ and $F \in \calT$.
 Then we have $\omega_F = 0$ if and only if 
\begin{gather*}
  \trace^{k}_{f} \omega^{\calW} = 0, \quad f \in \Delta(F), 
  \\
  \mathring\omega_{f} = 0, \quad f \in \Delta(F)^{k}.
 \end{gather*}
\end{lemma}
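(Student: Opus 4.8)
The plan is to read both implications directly off the geometric decomposition of Theorem~\ref{theorem:geometricdecomposition}, namely $\omega = \omega^{\calW} + \sum_{m=k}^{n}\omega^{m}$, after restricting to $F$. The one computation underlying everything is that for any simplex $f$ and any level $l$ the consistency and locality identities~\eqref{math:localextension:consistent} and~\eqref{math:localextension:locality} give
\begin{gather*}
 \trace^{k}_{f}\omega^{l} = \sum_{ g \in \Delta(f)^{l} } \ext^{k}_{g,f}\,\mathring\omega_{g} ,
\end{gather*}
because $\Ext_{g}^{k}\mathring\omega_{g}$ vanishes on every simplex that does not contain $g$. I will also use repeatedly that membership in $C^{\infty}\Lambda^{k}(\calT)$ forces $\trace^{k}_{F,f}\omega_{F} = \omega_{f}$ for $f \in \Delta(F)$, so that $\omega_{F} = 0$ propagates to $\omega_{f} = 0$ on every subsimplex of $F$.

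For the forward implication I assume $\omega_{F} = 0$, hence $\omega_{f} = 0$ for all $f \in \Delta(F)$. The first condition then follows because, by~\eqref{math:geodecomp:whitneypart}, $\trace^{k}_{f}\omega^{\calW}$ is a linear combination of the forms $\ext^{k}_{f',f}\vol_{f'}$ weighted by the integrals $\int_{f'}\trace^{k}_{f'}\omega$ over $f' \in \Delta(f)^{k} \subseteq \Delta(F)^{k}$, all of which vanish. The second condition I obtain by induction on $\dim f$: for $\dim f = k$ the definition gives $\mathring\omega_{f} = \trace^{k}_{f}(\omega - \omega^{\calW}) = 0$, and for $\dim f = m > k$ the displayed trace formula together with the inductive hypothesis $\mathring\omega_{g} = 0$ for $g \subsetneq f$ shows $\trace^{k}_{f}\omega^{l} = 0$ for $k \le l \le m-1$, whence $\mathring\omega_{f} = \trace^{k}_{f}\bigl(\omega - \omega^{\calW} - \sum_{l=k}^{m-1}\omega^{l}\bigr) = 0$.

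For the converse I substitute the decomposition into $\omega_{F} = \trace^{k}_{F}\omega$ and evaluate term by term: the first hypothesis at $f = F$ removes $\omega^{\calW}_{F}$, and the displayed trace formula reduces each $\trace^{k}_{F}\omega^{m}$ to $\sum_{G \in \Delta(F)^{m}}\ext^{k}_{G,F}\mathring\omega_{G}$ (the sum over $m$ runs only up to $\dim F$), which the second hypothesis annihilates, giving $\omega_{F} = 0$. It is worth flagging that here the second condition must be read as ranging over all $f \in \Delta(F)$ of dimension at least $k$, not merely the $k$-dimensional subsimplices: for $\dim F > k$ a nonzero interior bubble $\omega = \Ext_{F}^{k}\mathring\eta$ with $\mathring\eta \in \underline{\mathring\calP}\Lambda^{k}(F) \setminus \nullspace$ has $\omega^{\calW} = 0$ and $\mathring\omega_{f} = 0$ for every $f \in \Delta(F)^{k}$, yet $\omega_{F} \neq 0$, so the $k$-dimensional interior data alone cannot control $\omega_{F}$. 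The only genuine bookkeeping is the forward induction, where one must ensure that $\trace^{k}_{f}\omega^{l}$ receives contributions solely from subsimplices $g \in \Delta(f)$; this is exactly what~\eqref{math:localextension:locality} and~\eqref{math:localextension:consistent} provide, and everything else is a direct evaluation of the geometric decomposition on the face $F$.
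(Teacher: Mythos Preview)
Your argument is correct and follows essentially the same route as the paper: both proofs rest on the trace identity
\[
  \omega_{F}
  =
  \trace^{k}_{F}\omega^{\calW}
  +
  \sum_{f\in\Delta(F)} \ext^{k}_{f,F}\,\mathring\omega_{f},
\]
obtained from~\eqref{math:localextension:consistent} and~\eqref{math:localextension:locality}, and both finish by induction on $\dim F$. You have simply written out the forward and converse implications separately, whereas the paper treats them together in a more compressed way.

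You are also right to flag the typographical slip in the statement. The second condition must read $\mathring\omega_{f}=0$ for all $f\in\Delta(F)$ (equivalently, for all $f\in\Delta(F)$ with $\dim f\geq k$), not merely for $f\in\Delta(F)^{k}$. The paper's own proof uses the sum $\sum_{f\in\Delta(F)}\Ext^{k}_{f,F}\mathring\omega_{f}$, and the immediately following Lemma~\ref{lemma:geometricdecomposition:boundary} states the condition as $\mathring\omega_{F}=0$ for $F\in\calU$ while restricting the Whitney condition to $\calU^{k}$, so the two index sets in the present lemma are visibly interchanged. Your counterexample with a single interior bubble $\Ext_{F}^{k}\mathring\eta$ makes the point cleanly.
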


\begin{proof}
 For any $\omega \in \calP\Lambda^{k}(\calT)$ and $F \in \calT^{m}$ we observe 
 \begin{align*}
  \omega_{F} 
  &=
  \trace^{k}_{F} \omega^{\calW} 
  +
  \sum_{ k \leq m \leq n} 
  \sum_{ f \in \calT^{m} }
  \trace^{k}_{F} \Ext_{f,\calT}^{k} \mathring\omega_{f}
  \\&=
  \sum_{ f \in \Delta(F)^{k}} \vol(F)^{\inv} \left( \int_{f} \trace^{k}_{f} \omega \right) \Ext_{f,F}^{k} \vol_F
  +
  \sum_{ f \in \Delta(F) }
  \Ext_{f,F}^{k} \mathring\omega_{f}
  .
 \end{align*}
 If $k = m$, then $\omega_F = \trace^{k}_F \omega^{\calW} + \mathring\omega_{F}$, 
 and the claims follow by this being a direct sum.
 If $k < m$, let us assume that the claim holds true for all $f \in \calT$ with $k \leq \dim f < m$.
 Then $\omega_F = \mathring\omega_F$, which again proves the claim. 
 The lemma now follows from an induction argument. 
\end{proof}

\begin{lemma} \label{lemma:geometricdecomposition:boundary}
 For $\omega \in \calP\Lambda^{k}(\calT)$ we have $\omega \in \calP\Lambda^{k}(\calT,\calU)$
 if and only if
 \begin{gather*}
  \mathring\omega_F = 0, \quad F \in \calU,
  \\
  \omega^{\calW}_{F} = 0, \quad F \in \calU^{k}.
 \end{gather*}
\end{lemma}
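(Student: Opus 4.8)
The plan is to reduce the global boundary condition to the simplex-wise zero conditions of Lemma~\ref{lemma:geometricdecomposition:zerocondition}, and then to aggregate those conditions over $\calU$, using that a simplicial subcomplex is closed under passage to subsimplices.

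First I would make the definition explicit: in parallel with $\calW\Lambda^{k}(\calT,\calU) = \calW\Lambda^{k}(\calT)\cap C^{\infty}\Lambda^{k}(\calT,\calU)$, the space $\calP\Lambda^{k}(\calT,\calU)$ is $\calP\Lambda^{k}(\calT)\cap C^{\infty}\Lambda^{k}(\calT,\calU)$, so by~\eqref{math:smoothforms:boundaryconditions} membership $\omega \in \calP\Lambda^{k}(\calT,\calU)$ is exactly the requirement $\omega_{F} = 0$ for all $F \in \calU$. I would then treat both implications simultaneously: fixing $F \in \calU$ and applying Lemma~\ref{lemma:geometricdecomposition:zerocondition} rewrites $\omega_{F} = 0$ as the vanishing of the bubbles $\mathring\omega_{f}$ over the faces $f \in \Delta(F)$ together with the vanishing of the Whitney traces $\trace^{k}_{f}\omega^{\calW} = \omega^{\calW}_{f}$ over the $k$-faces $f \in \Delta(F)^{k}$.

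The aggregation step is the crux and is responsible for the two distinct index sets in the statement. Since $\calU$ is a simplicial subcomplex, \eqref{math:simplicialcomplex:uno} gives $\Delta(F) \subseteq \calU$ for every $F \in \calU$, whence $\bigcup_{F \in \calU}\Delta(F) = \calU$ and $\bigcup_{F \in \calU}\Delta(F)^{k} = \calU^{k}$. Taking the conjunction of the simplex-wise characterizations over all $F \in \calU$ thus collapses the bubble conditions to $\mathring\omega_{f} = 0$ for $f \in \calU$ and the Whitney conditions to $\omega^{\calW}_{f} = 0$ for $f \in \calU^{k}$, which is precisely the asserted pair of conditions.

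The one subtlety I expect to be the main obstacle is justifying the asymmetry between the two index sets, namely why the Whitney part need only be controlled on $k$-simplices whereas the bubble part must be controlled in every dimension. This rests on the geometric decomposition itself: the bubbles $\mathring\omega_{f}$ for faces of different dimensions carry genuinely independent data, so each must be annihilated, whereas $\omega^{\calW}$ is assembled in~\eqref{math:geodecomp:whitneypart} solely from the $k$-dimensional degrees of freedom $\int_{f}\trace^{k}_{f}\omega$, so that $\trace^{k}_{F}\omega^{\calW} = 0$ is equivalent to the vanishing of $\omega^{\calW}_{f}$ on the $k$-faces $f$ of $F$; the trace identity $\trace^{k}_{F,f}\trace^{k}_{T,F} = \trace^{k}_{T,f}$ then propagates this vanishing to all higher faces. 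I would make sure the direct-sum nature of the decomposition underlying Lemma~\ref{lemma:geometricdecomposition:zerocondition} is invoked explicitly, so that each component condition is genuinely independent and the collapse to $\calU$ and $\calU^{k}$ is an honest equivalence rather than a mere implication.
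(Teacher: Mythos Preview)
Your proposal is correct and follows exactly the approach the paper intends: the paper's proof consists of the single sentence ``This is a simple consequence of Lemma~\ref{lemma:geometricdecomposition:zerocondition},'' and your argument spells out precisely that consequence by aggregating the simplex-wise zero conditions over $\calU$ and using that $\calU$ is closed under taking subsimplices. Your discussion of the asymmetry between the index sets $\calU$ and $\calU^{k}$ is more detailed than necessary but entirely accurate.
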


\begin{proof}
 This is a simple consequence of Lemma~\ref{lemma:geometricdecomposition:zerocondition}.
\end{proof}

\begin{lemma} \label{lemma:geometricdecomposition:globalzero}
 For $\omega \in \calP\Lambda^{k}(\calT)$ we have $\omega = 0$
 if and only if
 \begin{gather*}
  \mathring\omega_F = 0, \quad F \in \calT,
  \\
  \omega^{\calW}_{F} = 0, \quad F \in \calT^{k}.
 \end{gather*}
\end{lemma}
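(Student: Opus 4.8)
The plan is to obtain this statement as the special case $\calU = \calT$ of Lemma~\ref{lemma:geometricdecomposition:boundary}. The key observation is that the global vanishing $\omega = 0$ is nothing but the relative condition $\omega \in \calP\Lambda^{k}(\calT,\calT)$: membership in $\calP\Lambda^{k}(\calT,\calU)$ requires $\omega_F = 0$ for every $F \in \calU$, so choosing $\calU = \calT$ forces every component $\omega_F$ to vanish, which is exactly $\omega = 0$. Since $\calT$ is trivially a simplicial subcomplex of itself, Lemma~\ref{lemma:geometricdecomposition:boundary} is legitimately applicable with $\calU = \calT$, and its hypotheses become $\mathring\omega_F = 0$ for all $F \in \calT$ together with $\omega^{\calW}_F = 0$ for all $F \in \calU^{k} = \calT^{k}$. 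These are verbatim the conditions in the present statement, so the claimed equivalence follows by substitution.

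As an alternative that bypasses the boundary lemma and uses only Lemma~\ref{lemma:geometricdecomposition:zerocondition}, I would argue directly. Here $\omega = 0$ holds if and only if $\omega_F = 0$ for every $F \in \calT$, and I would apply the single-simplex characterization of $\omega_F = 0$ to each $F$ and take the conjunction over all $F \in \calT$. Because every simplex is a subsimplex of itself, the pooled conditions $\trace^{k}_{f}\omega^{\calW} = 0$ (ranging over $f \in \Delta(F)$ and $F \in \calT$) collapse to $\omega^{\calW}_F = 0$ for $F \in \calT^{k}$, and the pooled conditions $\mathring\omega_f = 0$ collapse to $\mathring\omega_F = 0$ for all $F \in \calT$, recovering the same two requirements.

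There is essentially no obstacle in either route, since all the content is already carried by the preceding lemmas. The only point deserving a moment's care is recognizing that $\calT$ qualifies as a subcomplex of itself, so that Lemma~\ref{lemma:geometricdecomposition:boundary} may be invoked, and that $\calU^{k}$ then coincides with $\calT^{k}$; once this identification is made the result is immediate.
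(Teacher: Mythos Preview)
Your first route is exactly the paper's proof: it applies Lemma~\ref{lemma:geometricdecomposition:boundary} with $\calU = \calT$, using that $\omega \in \calP\Lambda^{k}(\calT,\calT)$ is equivalent to $\omega = 0$. Your alternative via Lemma~\ref{lemma:geometricdecomposition:zerocondition} is also valid and simply unwinds what the boundary lemma already packages.
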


\begin{proof}
 This follows from Lemma~\ref{lemma:geometricdecomposition:boundary} applied to the case $\calU = \calT$.
\end{proof}

\begin{theorem}
 We have 
 \begin{gather*}
  \calP\Lambda^{k}(\calT,\calU)
  = 
  \calW\Lambda^{k}(\calT,\calU) 
  \oplus
  \bigoplus_{ F \in \calT \setminus \calU } \Ext_{F}^{k} \underline{\mathring\calP}\Lambda^{k}(F). 
 \end{gather*}
\end{theorem}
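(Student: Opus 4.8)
The plan is to read the statement off the geometric decomposition Theorem~\ref{theorem:geometricdecomposition} together with the three vanishing lemmas, treating the equality of spaces and the directness of the sum separately. First I would verify that every summand on the right lies in $\calP\Lambda^{k}(\calT,\calU)$. The inclusion $\calW\Lambda^{k}(\calT,\calU)\subseteq\calP\Lambda^{k}(\calT,\calU)$ holds because Whitney forms are the lowest-order members $\calP_{1}^{-}\Lambda^{k}$ of the families, which the standing construction (augmenting the lowest-order complex) places inside every $\calP\Lambda^{k}(T)$, and they satisfy the $\calU$-boundary conditions by definition; note this is consistent with $\omega^{\calW}$ being a Whitney form lying in $\calP\Lambda^{k}(\calT)$ in Theorem~\ref{theorem:geometricdecomposition}. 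For a fixed $F\in\calT\setminus\calU$, the relation~\eqref{math:globalextension:finiteelementrange} gives $\Ext_{F}^{k}\underline{\mathring\calP}\Lambda^{k}(F)\subseteq\calP\Lambda^{k}(\calT)$; since $\Ext_{F}^{k}$ produces forms vanishing on simplices not containing $F$, and since $\calU$ is a subcomplex with $F\notin\calU$ (so no simplex of $\calU$ contains $F$), these forms vanish on $\calU$ and hence lie in $\calP\Lambda^{k}(\calT,\calU)$. Conversely, given $\omega\in\calP\Lambda^{k}(\calT,\calU)$, Theorem~\ref{theorem:geometricdecomposition} writes $\omega=\omega^{\calW}+\sum_{m=k}^{n}\sum_{F\in\calT^{m}}\Ext_{F}^{k}\mathring\omega_{F}$ with $\mathring\omega_{F}\in\underline{\mathring\calP}\Lambda^{k}(F)$; Lemma~\ref{lemma:geometricdecomposition:boundary} makes the terms with $F\in\calU$ drop out, and its conclusion together with the subcomplex property forces $\omega^{\calW}\in\calW\Lambda^{k}(\calT,\calU)$. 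This places the left-hand side inside the sum of the right-hand spaces, so the two sides agree as a sum of subspaces (terms with $\dim F<k$ are trivial and harmless).

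To obtain directness I would show that the assembly map $(w,(\eta_{F})_{F\in\calT\setminus\calU})\mapsto w+\sum_{F}\Ext_{F}^{k}\eta_{F}$, with $w\in\calW\Lambda^{k}(\calT,\calU)$ and $\eta_{F}\in\underline{\mathring\calP}\Lambda^{k}(F)$, is injective. The key is a consistency statement: for such a combination $\omega$, the canonical decomposition reproduces the pieces, $\omega^{\calW}=w$ and $\mathring\omega_{F}=\eta_{F}$. For the Whitney part I would evaluate $\int_{F}\trace_{F}^{k}\omega$ for $F\in\calT^{k}$; by locality~\eqref{math:localextension:locality} only the $G=F$ extension term survives, it equals $\eta_{F}$ by the identity property~\eqref{math:localextension:identity}, and $\int_{F}\eta_{F}=0$ since $\eta_{F}$ lies in the zero-mean space $\underline{\mathring\calP}\Lambda^{k}(F)$ with $k=\dim F$, whence $\int_{F}\trace_{F}^{k}\omega=\int_{F}\trace_{F}^{k}w$ and so $\omega^{\calW}=w^{\calW}=w$. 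I would then get $\mathring\omega_{F}=\eta_{F}$ by induction on $\dim F$: after subtracting $\omega^{\calW}=w$ and the already-identified lower levels, the remainder is $\sum_{\dim G\geq m}\Ext_{G}^{k}\eta_{G}$, whose trace onto $F\in\calT^{m}$ retains only $G=F$ by~\eqref{math:localextension:locality}, giving $\eta_{F}$ by~\eqref{math:localextension:identity}. With consistency established, if the combination is zero then Lemma~\ref{lemma:geometricdecomposition:globalzero} forces $\eta_{F}=\mathring\omega_{F}=0$ for every $F$ and $\omega^{\calW}_{F}=w_{F}=0$ for $F\in\calT^{k}$, hence $w=0$; injectivity, and therefore directness, follows.

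I expect the consistency step to be the main obstacle: showing that the abstractly assembled form $w+\sum_{F}\Ext_{F}^{k}\eta_{F}$ has precisely $w$ and the $\eta_{F}$ as its canonical decomposition data. This is where locality~\eqref{math:localextension:locality}, the identity property~\eqref{math:localextension:identity}, and the zero-mean normalization built into $\underline{\mathring\calP}\Lambda^{k}(F)$ all enter, and it must be organized as a clean dimension-increasing induction so that extension contributions from strictly higher-dimensional simplices never pollute lower-dimensional traces. Once this is in place, the two inclusions of the first paragraph and the final appeal to Lemma~\ref{lemma:geometricdecomposition:globalzero} are routine bookkeeping.
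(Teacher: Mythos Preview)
Your proposal is correct and follows exactly the route the paper intends: the theorem is stated in the paper without proof, as an immediate consequence of Theorem~\ref{theorem:geometricdecomposition} and Lemmas~\ref{lemma:geometricdecomposition:boundary} and~\ref{lemma:geometricdecomposition:globalzero}. Your consistency step (showing that the canonical decomposition recovers the given $w$ and $\eta_F$) is the honest content behind the paper's implicit claim of directness, and your use of~\eqref{math:localextension:identity},~\eqref{math:localextension:locality}, and the zero-mean normalization is precisely what makes it work.
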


We study a modification of the geometric decomposition.

\begin{lemma} \label{lemma:geometricdecomposition:modified}
 Let $\omega \in \calP\Lambda^{k}(\calT)$.
 Then there exist unique $\mathring\omega_{F} \in \underline{\mathring\calP}\Lambda^{k}(F)$ for $F \in \calT$
 such that 
 \begin{align} \label{math:geometricdecomposition:modified}
  \omega 
  =
  I^{k}_{\calW} \omega 
  +
  \sum_{ k \leq m \leq n} 
  \sum_{ F \in \calT^{m} }
  \Ext_{F}^{k} \mathring\omega^{m}_{F}
  .
 \end{align}
\end{lemma}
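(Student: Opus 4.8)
The plan is to obtain the modified decomposition directly from the direct-sum decomposition established in the theorem immediately preceding this lemma, and then to identify the lowest-order summand with the canonical interpolant $I^{k}_{\calW}\omega$. Both the existence and the uniqueness of the forms $\mathring\omega_F$ will be consequences of the direct-sum structure, so the only genuine content is the identification of the Whitney part.

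First I would invoke the preceding decomposition theorem with $\calU = \emptyset$, which yields
\begin{gather*}
 \calP\Lambda^{k}(\calT)
 =
 \calW\Lambda^{k}(\calT)
 \oplus
 \bigoplus_{F \in \calT} \Ext_{F}^{k}\underline{\mathring\calP}\Lambda^{k}(F).
\end{gather*}
Consequently there is a unique $w \in \calW\Lambda^{k}(\calT)$ and, since each $\Ext_F^k$ is injective on $\underline{\mathring\calP}\Lambda^k(F)$ by the identity property~\eqref{math:localextension:identity}, unique forms $\mathring\omega_F \in \underline{\mathring\calP}\Lambda^k(F)$ with $\omega = w + \sum_{F\in\calT}\Ext_F^k\mathring\omega_F$. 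Because $\underline{\mathring\calP}\Lambda^k(F) = \nullspace$ whenever $\dim F < k$, only simplices of dimension at least $k$ contribute, which reorganizes the sum into the graded form $\sum_{k\le m\le n}\sum_{F\in\calT^m}$ appearing in the statement. This already delivers existence and uniqueness of the $\mathring\omega_F$.

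It then remains to show $w = I^k_{\calW}\omega$. Since $I^k_{\calW}$ is linear and restricts to the identity on $\calW\Lambda^k(\calT)$, applying it to the decomposition gives $I^k_{\calW}\omega = w + \sum_F I^k_{\calW}\Ext_F^k\mathring\omega_F$, so it suffices to prove $I^k_{\calW}\Ext_F^k\mathring\omega_F = 0$ for each $F$; equivalently, by the defining property of the interpolant, that $\int_{F'}\Ext_F^k\mathring\omega_F = 0$ for every $F'\in\calT^k$. If $F\notin\Delta(F')$ then $\Ext_F^k\mathring\omega_F$ vanishes on $F'$ and the integral is zero. If $F\in\Delta(F')$, then $\dim F \ge k = \dim F'$ forces $F = F'$, and~\eqref{math:localextension:identity} gives $\trace_{F'}^k\Ext_F^k\mathring\omega_F = \mathring\omega_F$; but $\mathring\omega_F$ lies in $\underline{\mathring\calP}\Lambda^k(F)$, which for a $k$-simplex $F$ consists precisely of the top-degree forms with $\int_F\mathring\omega_F = 0$. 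Hence the integral vanishes in every case, $I^k_{\calW}\omega = w$, and the identification is complete.

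The main obstacle I anticipate is exactly this last identification step: the whole point of the lemma is that the canonical interpolant annihilates every higher-order extension contribution, and this rests entirely on the built-in zero-integral normalization of $\underline{\mathring\calP}\Lambda^k(F)$ in the top-degree case $\dim F = k$. Everything else is bookkeeping inherited from the preceding direct-sum theorem and the locality of the global extension operators.
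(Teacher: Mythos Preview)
Your argument is correct. The route differs slightly from the paper's: the paper applies the constructive decomposition theorem (Theorem~\ref{theorem:geometricdecomposition}) directly to the difference $\omega - I^{k}_{\calW}\omega$, observing that its Whitney part $(\omega - I^{k}_{\calW}\omega)^{\calW}$ vanishes because $\int_{F}\trace^{k}_{F}(\omega - I^{k}_{\calW}\omega)=0$ for every $F\in\calT^{k}$, and reads off the remaining terms. You instead invoke the abstract direct-sum theorem for $\omega$ itself and then identify the Whitney summand $w$ with $I^{k}_{\calW}\omega$ by showing that the interpolant annihilates every extension contribution. Both arguments rest on exactly the same observation you single out at the end --- the zero-integral normalization built into $\underline{\mathring\calP}\Lambda^{k}(F)$ for $k$-simplices $F$ --- so the difference is purely organizational; your version has the minor advantage of making the uniqueness claim explicit via the direct sum, which the paper's terse proof leaves implicit.
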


\begin{proof}
 Let $\omega \in \calP\Lambda^{k}(\calT)$. 
 The trace of $I^{k}_{\calW} \omega - \omega$ over any simplex $F \in \calT^{k}$ has vanishing integral.
 The claim follows from applying Theorem~\ref{theorem:geometricdecomposition} to $I^{k}_{\calW} - \omega$.
\end{proof}

In the remainder of this section, we define the canonical finite element interpolant and study some of its properties. 
We modify and build upon basic ideas found in the published literature~\cite{deRhamHPFEM}. 
Our construction explicitly calculates the geometric decomposition of the interpolating differential form
and a choice (generally arbitrary) of a Riemannian metric over each simplex. 
We first define 
\begin{gather}
 \label{math:interpolator:whitney}
 J^{k}_{\calW} : C^{\infty}\Lambda^{k}(\calT) \rightarrow \calP\Lambda^{k}(\calT),
 \quad 
 \omega \mapsto \sum_{ F \in \calT^{k} } \vol(F)^{\inv} \left( \int_F \trace^{k}_{F} \omega \right) \Ext_{F}^{k} \vol_{F}
 .
\end{gather}
In what follows, we abbreviate 
\begin{align*}
     \omega^{\calW} := J^{k}_{\calW} \omega.
\end{align*}
Subsequently, for $m \in \{k, \dots, n \}$, we define recursively 
\begin{gather}
 \label{math:interpolator:level}
 J^{k}_{m} : C^{\infty}\Lambda^{k}(\calT) \rightarrow \calP\Lambda^{k}(\calT),
 \quad 
 \omega \mapsto \sum_{ F \in \calT^{m} } \Ext_{F}^{k} J^{k}_{F} \omega, 
\end{gather}
where for each $F \in \calT^{m}$ we define 
\begin{gather}
 \label{math:interpolator:simplex}
 J^{k}_{F} : C^{\infty}\Lambda^{k}(\calT) \rightarrow \underline{\mathring\calP}\Lambda^{k}(F)
\end{gather}
by requiring $J^{k}_{F} \omega$ for $\omega \in C^{\infty}\Lambda^{k}(\calT)$
to be the unique solution of 
\begin{subequations}
\label{math:geodecomp}
\begin{gather}
 \label{math:geodecomp:cycles} 
 \int_{F} \left\langle J^{k}_{F} \omega, \cartan^{k-1} \rho \right\rangle_{g}
 =
\int_{F} \left\langle \trace^{k}_{F}\left( \omega - J^{k}_{\calW} \omega - \sum_{k = l}^{m-1} J^{k}_{l} \omega \right), \cartan^{k-1} \rho \right\rangle_{g},
 \quad 
 \rho \in \underline{\mathring\calP}\Lambda^{k-1}(F),
 \\
 \label{math:geodecomp:cocycles}
 \int_{F} \left\langle \cartan^{k} J^{m}_{F} \omega, \cartan^{k} \beta \right\rangle_{g}
 =
 \int_{F} \left\langle \cartan^{k} \trace^{k}_{F}\left( \omega - J^{k}_{\calW} \omega - \sum_{k = l}^{m-1} J^{k}_{l} \omega \right), \cartan^{k} \beta \right\rangle_{g},
 \quad 
 \beta \in \underline{\mathring\calP}\Lambda^{k}(F).
\end{gather}
\end{subequations}
That $J^{k}_{F} \omega$ is well-defined follows easily from Lemma~\ref{lemma:dofrepresentation:advanced}.
We then set 
\begin{gather}
 \label{math:interpolator:gesamt}
 I^{k}_{\calP} : C^{\infty}\Lambda^{k}(\calT) \rightarrow \calP\Lambda^{k}(\calT),
 \quad 
 \omega 
 \mapsto 
 J^{k}_{\calW} \omega + J^{k}_{k} \omega + \dots  + J^{k}_{n} \omega
 .
\end{gather}
We show that the operator $I^{k}_{\calP}$ acts as the identity on $\calP\Lambda^{k}(\calT)$,
and its constituents $J^{k}_{F}$ reproduce the geometric decomposition.

\begin{lemma} \label{lemma:interpolator:geodecomp}
 For each $\omega \in \calP\Lambda^{k}(\calT)$ we have $I^{k}_{\calP} \omega = \omega$.
 Moreover, $J^{k}_{\calW} \omega = \omega^{\calW}$ and $J^{k}_{F}\omega = \mathring\omega_{F}$ for each $F \in \calT$.
\end{lemma}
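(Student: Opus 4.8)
The plan is to prove the three assertions simultaneously by induction on the dimension of the simplices, exhibiting the recursive construction of $I^k_{\calP}$ as a mirror of the recursion behind the geometric decomposition of Theorem~\ref{theorem:geometricdecomposition}. First I would dispose of the identity $J^k_{\calW}\omega = \omega^{\calW}$ with essentially no work: the defining formula~\eqref{math:interpolator:whitney} for $J^k_{\calW}$ is verbatim the formula~\eqref{math:geodecomp:whitneypart} for the Whitney part $\omega^{\calW}$, so the two agree for every $\omega \in C^{\infty}\Lambda^{k}(\calT)$. The real content is the claim $J^k_F\omega = \mathring\omega_F$ for $\omega \in \calP\Lambda^{k}(\calT)$, and the crucial structural input is that the pair of variational conditions~\eqref{math:geodecomp} pins down $J^k_F\omega$ as the \emph{unique} element of $\underline{\mathring\calP}\Lambda^{k}(F)$ satisfying them, the well-posedness being exactly what Lemma~\ref{lemma:dofrepresentation:advanced} supplies.

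For the inductive step, fix $F \in \calT^{m}$ and assume $J^k_f\omega = \mathring\omega_f$ for every $f \in \calT$ with $\dim f < m$; summing the extensions level by level then gives $J^k_l\omega = \omega^{l}$ for $k \le l \le m-1$. Feeding these identities, together with $J^k_{\calW}\omega = \omega^{\calW}$, into the right-hand sides of~\eqref{math:geodecomp:cycles} and~\eqref{math:geodecomp:cocycles}, the argument of the trace becomes exactly $\omega - \omega^{\calW} - \sum_{l=k}^{m-1}\omega^{l}$, whose trace on $F$ is $\mathring\omega_F$ by~\eqref{math:geodecomp:simplexpart}. Hence the right-hand sides collapse to $\int_{F}\langle \mathring\omega_F, \cartan^{k-1}\rho\rangle_{g}$ and $\int_{F}\langle \cartan^{k}\mathring\omega_F, \cartan^{k}\beta\rangle_{g}$ respectively, the latter because $\cartan^{k}\trace^{k}_{F}(\omega - \omega^{\calW} - \sum_{l=k}^{m-1}\omega^{l}) = \cartan^{k}\mathring\omega_F$. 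Since Theorem~\ref{theorem:geometricdecomposition} guarantees $\mathring\omega_F \in \underline{\mathring\calP}\Lambda^{k}(F)$, the form $\mathring\omega_F$ is an admissible candidate that solves both conditions trivially; by the uniqueness in~\eqref{math:geodecomp} we conclude $J^k_F\omega = \mathring\omega_F$. The base case $m = k$ is the same computation with the sum over $l$ empty, the lower-dimensional traces being handled by the fact that $\underline{\mathring\calP}\Lambda^{k}(f)$ vanishes when $\dim f < k$.

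With the constituents identified, the first assertion follows by assembling levels: $J^k_m\omega = \sum_{F \in \calT^{m}}\Ext^{k}_{F} J^k_F\omega = \sum_{F \in \calT^{m}}\Ext^{k}_{F}\mathring\omega_F = \omega^{m}$ for each $m$, so by~\eqref{math:interpolator:gesamt} we obtain $I^k_{\calP}\omega = \omega^{\calW} + \sum_{m=k}^{n}\omega^{m} = \omega$, the last equality being precisely~\eqref{math:geometricdecomposition}. The main obstacle, and the only place requiring genuine care, is the bookkeeping linking the variational right-hand sides in~\eqref{math:geodecomp} to the partial sums in~\eqref{math:geodecomp:simplexpart}: one must invoke the inductive hypothesis at exactly the right level so that the telescoping trace yields $\mathring\omega_F$ rather than a stray extension term. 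Everything else is a direct appeal to the uniqueness afforded by Lemma~\ref{lemma:dofrepresentation:advanced} and to the geometric decomposition already in hand.
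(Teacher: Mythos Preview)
Your proposal is correct and follows essentially the same approach as the paper: both argue by induction on the dimension $m$ of the simplices, first noting that $J^{k}_{\calW}\omega = \omega^{\calW}$ by definition, then using the inductive hypothesis to identify the trace argument in~\eqref{math:geodecomp} with $\mathring\omega_F \in \underline{\mathring\calP}\Lambda^{k}(F)$, whence $J^{k}_{F}\omega = \mathring\omega_F$ by uniqueness. Your write-up is in fact more explicit than the paper's about invoking the uniqueness furnished by Lemma~\ref{lemma:dofrepresentation:advanced}, which the paper leaves implicit in the phrase ``follows easily''.
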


\begin{proof}
 Let $\omega \in \calP\Lambda^{k}(\calT)$.
 We have $J^{k}_{\calW} \omega = \omega^{\calW}$ by definition.
 For $F \in \calT^{k}$,
 we find $\trace^{k}_{F}\left( \omega - \omega^{\calW} \right) \in \underline{\mathring\calP}\Lambda^{k}(F)$,
 and $J^{k}_{F} \omega = \mathring\omega_F$ follows easily.
Next, let $m \in \{ k, \dots, n-1 \}$ and suppose that $J^{k}_{F}\omega = \mathring\omega_{F}$ for $F \in \calT$ with $\dim F \leq m$.
 Let $F \in \calT^{m+1}$. 
 Unfolding definitions, we find 
 \begin{gather*}
  \trace^{k}_{F}\left( \omega - \omega^{\calW} - \sum_{l = k}^{m-1} J^{l} \omega \right)
  \in
  \underline{\mathring\calP}\Lambda^{k}(F).
 \end{gather*}
 It follows that $J^{k}_{F} \omega = \mathring\omega_{F}$ and hence $J^{k}_{m} \omega = \omega^{m}$.
 An induction argument completes the proof. 
\end{proof}

\begin{lemma} \label{lemma:interpolater:alternative:zero}
 Let $\omega \in \calP\Lambda^{k}(\calT)$. If
 \begin{subequations}
 \label{math:interpolator:alternative:zero}
 \begin{gather}
  \label{math:interpolator:alternative:zero:whitney}
  \int_{F} \trace^{k}_{F} \omega'
  =
  0,
  \quad
  F \in \calT^{k}, 
  \\
  \label{math:interpolator:alternative:zero:cycle}
  \int_{F} \left\langle \trace^{k}_{F} \omega', \cartan^{k-1} \rho \right\rangle_{g}
  =
  0,
  \quad 
  \rho \in \underline{\mathring\calP}\Lambda^{k-1}(F), \quad F \in \calT, 
  \\
  \label{math:interpolator:alternative:zero:cocycle}
  \int_{F} \left\langle \cartan^{k} \trace^{k}_{F} \omega', \cartan^{k} \beta \right\rangle_{g}
  =
  0,
  \quad 
  \beta \in \underline{\mathring\calP}\Lambda^{k}(F) \quad F \in \calT,
 \end{gather}
 \end{subequations}
 then $\omega = 0$. 
\end{lemma}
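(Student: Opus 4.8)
The plan is to reduce the statement to the characterization of the zero form provided by Lemma~\ref{lemma:geometricdecomposition:globalzero}: a form $\omega \in \calP\Lambda^{k}(\calT)$ vanishes if and only if $\omega^{\calW}_{F} = 0$ for all $F \in \calT^{k}$ and $\mathring\omega_{F} = 0$ for all $F \in \calT$, where $\omega^{\calW}$ and the $\mathring\omega_{F}$ are the constituents of the geometric decomposition of Theorem~\ref{theorem:geometricdecomposition}. Throughout I read the primed $\omega'$ appearing in the hypotheses as $\omega$ itself. First I would dispatch the Whitney part: by the definition \eqref{math:geodecomp:whitneypart}, $\omega^{\calW}$ is a linear combination of the forms $\Ext_{F}^{k}\vol_{F}$ over $F \in \calT^{k}$ with coefficients $\vol(F)^{\inv}\int_{F}\trace^{k}_{F}\omega$. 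Hypothesis \eqref{math:interpolator:alternative:zero:whitney} makes every such coefficient vanish, so $\omega^{\calW} = 0$, and in particular $\omega^{\calW}_{F} = 0$ for all $F \in \calT^{k}$.

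Next I would show $\mathring\omega_{F} = 0$ for every $F \in \calT$ by induction on $m = \dim F \geq k$. The crucial simplification is that, once $\omega^{\calW} = 0$ and $\mathring\omega_{f} = 0$ for all $f$ with $\dim f < m$, the level forms $\omega^{l}$ vanish for $k \leq l < m$ by \eqref{math:geodecomp:levelpart}, so the recursive definition \eqref{math:geodecomp:simplexpart} collapses to the telescoped identity $\mathring\omega_{F} = \trace^{k}_{F}\omega$. This is exactly what links the decomposition constituents to the raw traces appearing in the hypotheses.

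On the fixed simplex $F$, equip $\underline{\mathring\calP}\Lambda^{k}(F)$ with the scalar product $B_{g}$ and form the orthogonal splitting $\underline{\mathring\calP}\Lambda^{k}(F) = A_{0} \oplus A_{1}$ with $A_{0} = \underline{\mathring\calP}\Lambda^{k}(F) \cap \ker\cartan^{k}$, exactly as in the proof of Lemma~\ref{lemma:dofrepresentation:advanced}. By the exactness of the reduced relative complex \eqref{math:polydiffcomplex:relative:reduced}, we have $A_{0} = \cartan^{k-1}\underline{\mathring\calP}\Lambda^{k-1}(F)$. Hypothesis \eqref{math:interpolator:alternative:zero:cycle} then states precisely that $\mathring\omega_{F} = \trace^{k}_{F}\omega$ is $B_{g}$-orthogonal to all of $A_{0}$, hence $\mathring\omega_{F} \in A_{1}$. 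Testing hypothesis \eqref{math:interpolator:alternative:zero:cocycle} with $\beta = \mathring\omega_{F}$ gives $B_{g}\!\left(\cartan^{k}\mathring\omega_{F}, \cartan^{k}\mathring\omega_{F}\right) = 0$, so $\cartan^{k}\mathring\omega_{F} = 0$ and thus $\mathring\omega_{F} \in A_{0}$. Since $A_{0} \cap A_{1} = \nullspace$, we conclude $\mathring\omega_{F} = 0$, closing the induction. The base case $m = k$ is subsumed: there $A_{1} = \nullspace$ and $A_{0}$ is the whole space, so \eqref{math:interpolator:alternative:zero:cycle} alone forces $\mathring\omega_{F} = 0$, while \eqref{math:interpolator:alternative:zero:cocycle} is vacuous because there are no nonzero $(k+1)$-forms on a $k$-simplex.

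With $\omega^{\calW} = 0$ and all $\mathring\omega_{F} = 0$, Lemma~\ref{lemma:geometricdecomposition:globalzero} yields $\omega = 0$. The main obstacle, and the only genuinely substantive observation, is the telescoping identity $\mathring\omega_{F} = \trace^{k}_{F}\omega$ under the inductive hypothesis: it is what converts the hypotheses, which are phrased in terms of the traces $\trace^{k}_{F}\omega$, into statements about the geometric-decomposition constituents $\mathring\omega_{F}$, and it is exactly the recursion \eqref{math:geodecomp:simplexpart} together with the vanishing of the lower-level contributions that makes the induction run.
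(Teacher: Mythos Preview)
Your argument is correct and is essentially the same as the paper's: the paper's one-line proof (``This follows from Lemma~\ref{lemma:interpolator:geodecomp} and an induction argument'') unfolds to exactly the induction you wrote out, since by that lemma the interpolator constituents $J^{k}_{F}\omega$ coincide with the geometric-decomposition pieces $\mathring\omega_{F}$, and the defining equations \eqref{math:geodecomp} for $J^{k}_{F}$ reduce under the inductive hypothesis to the very pairings in \eqref{math:interpolator:alternative:zero:cycle}--\eqref{math:interpolator:alternative:zero:cocycle}. Your version is more self-contained in that it argues directly with the geometric decomposition and the Hodge-type splitting of Lemma~\ref{lemma:dofrepresentation:advanced}, concluding via Lemma~\ref{lemma:geometricdecomposition:globalzero} rather than via $\omega = I^{k}_{\calP}\omega$, but the substance is identical.
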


\begin{proof}
 This follows from~\eqref{lemma:interpolator:geodecomp} and an induction argument. 
\end{proof}

An auxiliary results yields an alternative characterization of $I^{k}_{\calP}$.

\begin{lemma} \label{lemma:interpolater:alternative}
 Let $\omega \in C^{\infty}\Lambda^{k}(\calT)$ and $\omega' \in \calP\Lambda^{k}(\calT)$. 
 We have $\omega' = I^{k}_{\calP} \omega$ if and only if 
 \begin{subequations}
 \label{math:interpolator:alternative}
 \begin{gather}
  \label{math:interpolator:alternative:whitney}
  \int_{F} \trace^{k}_{F} \omega'
  =
  \int_{F} \trace^{k}_{F} \omega,
  \quad
  F \in \calT^{k}, 
  \\
  \label{math:interpolator:alternative:cycle}
  \int_{F} \left\langle \trace^{k}_{F} \omega', \cartan^{k-1} \rho \right\rangle_{g}
  =
  \int_{F} \left\langle \trace^{k}_{F} \omega, \cartan^{k-1} \rho \right\rangle_{g},
  \quad 
  \rho \in \underline{\mathring\calP}\Lambda^{k-1}(F), \quad F \in \calT, 
  \\
  \label{math:interpolator:alternative:cocycle}
  \int_{F} \left\langle \cartan^{k} \trace^{k}_{F} \omega', \cartan^{k} \beta \right\rangle_{g}
  =
  \int_{F} \left\langle \cartan^{k} \trace^{k}_{F} \omega, \cartan^{k} \beta \right\rangle_{g},
  \quad 
  \beta \in \underline{\mathring\calP}\Lambda^{k}(F) \quad F \in \calT. 
 \end{gather}
 \end{subequations}
\end{lemma}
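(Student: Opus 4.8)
The plan is to read the statement as characterizing $I^{k}_{\calP}\omega$ as the unique member of $\calP\Lambda^{k}(\calT)$ satisfying the three families of linear equations in~\eqref{math:interpolator:alternative}, whose right-hand sides depend only on the smooth datum $\omega$. Accordingly I would split the argument into two parts: first, that $\omega' = I^{k}_{\calP}\omega$ does satisfy~\eqref{math:interpolator:alternative}, and second, that at most one $\omega' \in \calP\Lambda^{k}(\calT)$ can satisfy these equations. The ``only if'' direction is exactly the first part. The ``if'' direction then follows by combining the two: if $\omega'$ satisfies~\eqref{math:interpolator:alternative}, then $\omega' - I^{k}_{\calP}\omega \in \calP\Lambda^{k}(\calT)$ satisfies the homogeneous system~\eqref{math:interpolator:alternative:zero}, whence $\omega' - I^{k}_{\calP}\omega = 0$ by Lemma~\ref{lemma:interpolater:alternative:zero}.

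For the first part I would unfold $I^{k}_{\calP}\omega = J^{k}_{\calW}\omega + \sum_{m=k}^{n} J^{k}_{m}\omega$ and evaluate its trace on a fixed simplex $F \in \calT^{m}$. Using the locality~\eqref{math:localextension:locality}, consistency~\eqref{math:localextension:consistent}, and identity~\eqref{math:localextension:identity} properties of the extension operators, I expect that the contributions of $J^{k}_{l}\omega$ with $l > m$ vanish on $F$ (a simplex of dimension exceeding $m$ is never a subsimplex of $F$), that the level-$m$ contribution collapses to the single surviving term $J^{k}_{F}\omega$ (only the index $F' = F$ escapes locality, and there $\ext^{k}_{F,F}$ is the identity), and hence that
\begin{gather*}
 \trace^{k}_{F} I^{k}_{\calP}\omega
 =
 \trace^{k}_{F}\Bigl( J^{k}_{\calW}\omega + \sum_{l=k}^{m-1} J^{k}_{l}\omega \Bigr) + J^{k}_{F}\omega.
\end{gather*}
The Whitney condition~\eqref{math:interpolator:alternative:whitney} then follows for $F \in \calT^{k}$ because $J^{k}_{F}\omega \in \underline{\mathring\calP}\Lambda^{k}(F)$ has vanishing integral, while $J^{k}_{\calW}$ reproduces integrals over $k$-simplices by its very definition~\eqref{math:interpolator:whitney}.

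For the cycle and cocycle conditions~\eqref{math:interpolator:alternative:cycle} and~\eqref{math:interpolator:alternative:cocycle} I would substitute the trace identity above into the left-hand sides and invoke the defining equations~\eqref{math:geodecomp:cycles} and~\eqref{math:geodecomp:cocycles} for $J^{k}_{F}\omega$: the terms $\trace^{k}_{F}\bigl( J^{k}_{\calW}\omega + \sum_{l=k}^{m-1} J^{k}_{l}\omega \bigr)$ are precisely the quantity subtracted inside those defining equations, so they cancel and leave the desired right-hand side $\int_{F}\langle \trace^{k}_{F}\omega, \cartan^{k-1}\rho\rangle_{g}$ (and its cocycle analogue after applying $\cartan^{k}$). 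Since $J^{k}_{F}\omega$ for $F \in \calT^{m}$ is defined through $J^{k}_{l}\omega$ with $l < m$, this computation is naturally organized as an induction on $m = \dim F$, mirroring the inductive structure already used in Theorem~\ref{theorem:geometricdecomposition} and Lemma~\ref{lemma:interpolator:geodecomp}.

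The main obstacle I anticipate is the bookkeeping of the traces of the global extension operators: one must track exactly which terms of the double sum $\sum_{m}\sum_{F'\in\calT^{m}}\Ext^{k}_{F'}J^{k}_{F'}\omega$ survive restriction to $F$, and confirm that the surviving combination matches, term for term, the expression subtracted in~\eqref{math:geodecomp}. Once the trace identity is established cleanly, the cancellation is immediate and the remainder is routine; the uniqueness half is then a direct appeal to Lemma~\ref{lemma:interpolater:alternative:zero}.
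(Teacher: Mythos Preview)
Your proposal is correct and follows essentially the same approach as the paper's own proof: verify that $I^{k}_{\calP}\omega$ satisfies~\eqref{math:interpolator:alternative} by rearranging the defining equations~\eqref{math:geodecomp} using the extension-operator properties~\eqref{math:localextension:properties}, then invoke Lemma~\ref{lemma:interpolater:alternative:zero} on the difference $\omega' - I^{k}_{\calP}\omega$ for uniqueness. The paper's proof is terse where yours is explicit, but the trace identity you isolate and the subsequent cancellation are precisely what the paper means by ``rearranging the terms in~\eqref{math:interpolator:simplex} and the assumptions on the extension operators.''
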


\begin{proof}
 Let $\omega \in C^{\infty}\Lambda^{k}(\calT)$.
 We verify that $I^{k}_{\calP}\omega$ satisfies~\eqref{math:interpolator:alternative} by rearranging the terms in~\eqref{math:interpolator:simplex}
 and the assumptions on the extension operators. 
 If $\omega' \in \calP\Lambda^{k}(\calT)$ is another solution to~\eqref{math:interpolator:alternative},
 then we obtain $\omega' = I^{k}_{\calP} \omega$ by applying Lemma~\ref{lemma:interpolater:alternative:zero} to $\omega' - I^{k}_{\calP}\omega$. 
 The claim follows by an induction argument. 
\end{proof}

\begin{lemma} \label{lemma:interpolator:locality}
 Let $\omega \in C^{\infty}\Lambda^{k}(\calT)$ and $F \in \calT$. 
 If $\omega_{F} = 0$, then $\trace^{k}_{F} \left( I^{k}_{\calP} \omega \right) = 0$.
\end{lemma}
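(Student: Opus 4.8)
The plan is to exploit that the entire construction of $I^{k}_{\calP}$ is local with respect to the face poset of $\calT$: when we take the trace on $F$, only the contributions $\Ext^{k}_{G}(\cdot)$ with $G \in \Delta(F)$ survive, and the data feeding each such $J^{k}_{G}$ depends only on $\omega_{G}$ together with strictly lower-dimensional contributions. The hypothesis $\omega_{F} = 0$ propagates to all of $\Delta(F)$, because the single-valued trace condition defining $C^{\infty}\Lambda^{k}(\calT)$ gives $\omega_{G} = \trace^{k}_{F,G}\omega_{F} = 0$ for every $G \in \Delta(F)$. Hence every local datum supported on $\Delta(F)$ vanishes, and so must the trace of the interpolant.

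First I would record the trace decomposition. Unfolding~\eqref{math:interpolator:gesamt},~\eqref{math:interpolator:whitney} and~\eqref{math:interpolator:level}, and using that by~\eqref{math:globalextension} the $F$-component of $\Ext^{k}_{G}$ equals $\ext^{k}_{G,F}$ when $G \in \Delta(F)$ and vanishes otherwise, I obtain
\begin{gather*}
 \trace^{k}_{F}\left(I^{k}_{\calP}\omega\right)
 =
 \trace^{k}_{F} \omega^{\calW}
 +
 \sum_{\substack{G \in \Delta(F)\\ \dim G \geq k}} \ext^{k}_{G,F} J^{k}_{G} \omega .
\end{gather*}
Thus it suffices to show that $\trace^{k}_{F}\omega^{\calW} = 0$ and that $J^{k}_{G}\omega = 0$ for all $G \in \Delta(F)$ with $\dim G \geq k$. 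For the Whitney part, the same localization of $\Ext^{k}_{H}$ yields $\trace^{k}_{F}\omega^{\calW} = \sum_{H \in \Delta(F)^{k}}\vol(H)^{\inv}\left(\int_{H} \omega_{H}\right)\ext^{k}_{H,F}\vol_{H}$, and every coefficient $\int_{H}\omega_{H}$ vanishes since $\omega_{H} = 0$; hence $\trace^{k}_{F}\omega^{\calW} = 0$, and the identical computation on any $G \in \Delta(F)$ gives $\trace^{k}_{G}\omega^{\calW} = 0$.

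The main step is an induction on $\dim G$ for $G \in \Delta(F)$ establishing $J^{k}_{G}\omega = 0$. Fix such a $G$, set $m := \dim G \geq k$, and assume $J^{k}_{H}\omega = 0$ for all $H \in \Delta(F)$ with $\dim H < m$ (vacuous when $m = k$). The right-hand sides of the defining equations~\eqref{math:geodecomp} for $J^{k}_{G}\omega$ are the cycle- and cocycle-moments of $\trace^{k}_{G}\left(\omega - \omega^{\calW} - \sum_{l=k}^{m-1}J^{k}_{l}\omega\right)$. Here $\trace^{k}_{G}\omega = \omega_{G} = 0$; $\trace^{k}_{G}\omega^{\calW} = 0$ by the previous paragraph; and for $l < m$, localizing $\Ext^{k}_{H}$ once more gives $\trace^{k}_{G}J^{k}_{l}\omega = \sum_{H \in \Delta(G)^{l}}\ext^{k}_{H,G}J^{k}_{H}\omega = 0$ by the induction hypothesis, since each such $H$ lies in $\Delta(F)$ and has dimension $l < m$. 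Therefore all moments vanish, and the well-posedness of~\eqref{math:geodecomp} guaranteed by Lemma~\ref{lemma:dofrepresentation:advanced} forces $J^{k}_{G}\omega = 0$. Substituting $\trace^{k}_{F}\omega^{\calW} = 0$ and $J^{k}_{G}\omega = 0$ into the displayed decomposition then gives $\trace^{k}_{F}\left(I^{k}_{\calP}\omega\right) = 0$.

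The only delicate point is the bookkeeping ensuring that the local data on $G$ involve nothing beyond the faces of $G$, hence faces of $F$; this is precisely where the locality~\eqref{math:globalextension} of the extension operators and the trace compatibility defining $C^{\infty}\Lambda^{k}(\calT)$ enter, and it is what makes the induction close. Everything else is routine substitution into the definitions.
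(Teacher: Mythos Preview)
Your proof is correct and follows essentially the same approach as the paper: both localize the trace of $I^{k}_{\calP}\omega$ to contributions from subfaces of $F$, observe that the Whitney part vanishes because all $\omega_H = 0$ for $H \in \Delta(F)^{k}$, and then use an induction on dimension to show that each local contribution $J^{k}_{G}\omega$ vanishes. The only difference is organizational: you induct on $\dim G$ for $G \in \Delta(F)$ to establish $J^{k}_{G}\omega = 0$ directly, whereas the paper inducts on $\dim F$ over all simplices, proving the full statement $\trace^{k}_{F}(I^{k}_{\calP}\omega) = 0$ at each stage and implicitly extracting $J^{k}_{f}\omega = 0$ for proper subfaces from the induction hypothesis --- your version makes this extraction explicit and is arguably cleaner.
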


\begin{proof}
 Unfolding definitions, we find 
 \begin{align*}
  \trace^{k}_{F} \left( I^{k}_{\calP} \omega \right)
  &=
  \trace^{k}_{F} J^{k}_{\calW} \omega 
  +
  \sum_{ m = k}^{n} 
  \sum_{ f \in \calT^{m} }
  \trace^{k}_{F} \Ext_{f,\calT}^{k} J^{k}_{f} \omega
  \\&=
  \sum_{ f \in \Delta(F)^{k}} \vol(F)^{\inv} \left( \int_{f} \trace^{k}_{f} \omega \right) \Ext_{f,F}^{k} \vol_F
  +
  \sum_{ f \in \Delta(F) }
  \Ext_{f,F}^{k} J^{k}_{f} \omega
  .
 \end{align*}
 If $\dim F = k$, then the claim follows from the direct sum decomposition~\eqref{math:simplex:polydiffform:decomposition:bc}.
 If $\dim F > k$, suppose that the claim has been proven for $f \in \Delta(F)$.
 Since $\omega_F = 0$, we have $\omega_f = 0$ for $f \in \Delta(F)$. 
 Hence $\trace^{k}_{F} \left( I^{k}_{\calP} \omega \right) = J^{k}_{F} \omega$,
 from which $\trace^{k}_{F} \left( I^{k}_{\calP} \omega \right) = 0$ follows. 
 An induction argument proves the claim. 
\end{proof}

\begin{lemma} \label{lemma:interpolator:boundarycondition}
 If $\omega \in C^{\infty}\Lambda^{k}(\calT,\calU)$, then $I^{k}_{\calP} \omega \in \calP\Lambda^{k}(\calT,\calU)$.
\end{lemma}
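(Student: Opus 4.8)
The plan is to reduce the statement entirely to the locality property already established in Lemma~\ref{lemma:interpolator:locality}. First I would unfold the definition of the domain: by~\eqref{math:smoothforms:boundaryconditions}, the hypothesis $\omega \in C^{\infty}\Lambda^{k}(\calT,\calU)$ means precisely that $\omega_{F} = 0$ for every $F \in \calU$. Since $I^{k}_{\calP}$ takes values in $\calP\Lambda^{k}(\calT)$ by construction~\eqref{math:interpolator:gesamt}, it remains only to verify that the interpolant inherits the vanishing traces prescribed by $\calU$, so that it lands in the subspace with the corresponding boundary conditions.

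This is exactly where Lemma~\ref{lemma:interpolator:locality} applies. For each individual $F \in \calU$ the hypothesis gives $\omega_{F} = 0$, and the locality lemma then yields $\trace^{k}_{F}\left( I^{k}_{\calP} \omega \right) = 0$, that is, $\left( I^{k}_{\calP} \omega \right)_{F} = 0$. Ranging over all $F \in \calU$ shows $I^{k}_{\calP} \omega \in C^{\infty}\Lambda^{k}(\calT,\calU)$; intersecting with $\calP\Lambda^{k}(\calT)$ then gives $I^{k}_{\calP} \omega \in \calP\Lambda^{k}(\calT,\calU)$, which is the claim.

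As an internal cross-check I would confirm the same conclusion through the geometric decomposition. By Lemma~\ref{lemma:interpolator:geodecomp} the constituents $J^{k}_{\calW} \omega$ and $J^{k}_{F} \omega$ reproduce the Whitney part $\omega^{\calW}$ and the simplexwise bubbles $\mathring\omega_{F}$ of $I^{k}_{\calP} \omega$, so I can feed these into the characterization of $\calP\Lambda^{k}(\calT,\calU)$ furnished by Lemma~\ref{lemma:geometricdecomposition:boundary}: membership is equivalent to $\mathring\omega_{F} = 0$ for $F \in \calU$ together with $\omega^{\calW}_{F} = 0$ for $F \in \calU^{k}$. Both follow once the relevant traces of $\omega$ over simplices in $\calU$ vanish, since the defining problems~\eqref{math:geodecomp} and the Whitney projection~\eqref{math:interpolator:whitney} then have zero right-hand side.

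There is essentially no serious obstacle here: the substance of the lemma is already carried by the locality statement, and the only point requiring care is the bookkeeping that $\trace^{k}_{F}$ applied to an element of $C^{\infty}\Lambda^{k}(\calT)$ is its $F$-component, so that the conclusion of Lemma~\ref{lemma:interpolator:locality} is literally the condition defining $C^{\infty}\Lambda^{k}(\calT,\calU)$. The mildly delicate aspect, if any, is ensuring that locality is invoked simplex-by-simplex over all of $\calU$ rather than merely on its top-dimensional faces; but this is immediate, since $\calU$ is a subcomplex and the hypothesis $\omega_{F} = 0$ holds for \emph{every} $F \in \calU$.
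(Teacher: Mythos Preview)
Your main argument is correct and is exactly the paper's proof: the paper simply records that the lemma is an immediate consequence of Lemma~\ref{lemma:interpolator:locality}. The cross-check paragraph is superfluous (and slightly misapplies Lemma~\ref{lemma:interpolator:geodecomp}, which is stated only for $\omega \in \calP\Lambda^{k}(\calT)$, not for general smooth $\omega$), but your primary argument stands on its own.
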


\begin{proof}
 This is an immediate consequence of Lemma~\ref{lemma:interpolator:locality} above. 
\end{proof}

It remains to show that the canonical interpolant commutes with the exterior derivative,
so we have a commuting diagram
\begin{align*}
 \begin{CD}
  \dots 
  @>\cartan^{k-1}>>
  \calC^{\infty}\Lambda^{k  }(\calT,\calU)
  @>\cartan^{k  }>>
  \calC^{\infty}\Lambda^{k+1}(\calT,\calU)
  @>\cartan^{k+1}>>
  \dots
  \\
  @.
  @V{I^{k  }_{\calP}}VV
  @V{I^{k+1}_{\calP}}VV
  @.
  \\
  \dots 
  @>\cartan^{k-1}>>
  \calP\Lambda^{k  }(\calT,\calU)
  @>\cartan^{k  }>>
  \calP\Lambda^{k+1}(\calT,\calU)
  @>\cartan^{k+1}>>
  \dots
 \end{CD}
\end{align*}
This is the subject of the following lemma.

\begin{lemma} \label{lemma:interpolatorcommutes}
 We have $\cartan^{k  } I^{k}_{\calP} \omega = I^{k+1}_{\calP} \cartan^{k  } \omega$ for $\omega \in C^{\infty}\Lambda^{k}(\calT)$.
\end{lemma}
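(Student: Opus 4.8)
The plan is to avoid unfolding the recursive definition~\eqref{math:interpolator:gesamt} altogether and instead exploit the variational characterization of Lemma~\ref{lemma:interpolater:alternative}. Both sides of the claimed identity lie in $\calP\Lambda^{k+1}(\calT)$: the left-hand side by~\eqref{math:piecewisepolydiffform:cartan}, the right-hand side by construction. Hence, applying Lemma~\ref{lemma:interpolater:alternative} with $k$ replaced by $k+1$ to the input form $\cartan^{k}\omega$, it suffices to show that $\omega'' := \cartan^{k} I^{k}_{\calP}\omega$ satisfies the three families~\eqref{math:interpolator:alternative} at level $k+1$, with right-hand sides built from $\cartan^{k}\omega$. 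The whole argument then rests on the commuting relation $\trace^{k+1}_{F}\cartan^{k} = \cartan^{k}\trace^{k}_{F}$ and on the differential property $\cartan^{k+1}\cartan^{k}=0$.

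Two of the three families are essentially free. For the cocycle conditions~\eqref{math:interpolator:alternative:cocycle} at level $k+1$, every integrand carries a factor $\cartan^{k+1}\trace^{k+1}_{F}\cartan^{k}(\cdots) = \cartan^{k+1}\cartan^{k}\trace^{k}_{F}(\cdots)=0$, so both sides vanish. For the cycle conditions~\eqref{math:interpolator:alternative:cycle} at level $k+1$ the test forms range over $\rho\in\underline{\mathring\calP}\Lambda^{k}(F)$, and commuting the trace past $\cartan^{k}$ reduces the required identity to
\begin{gather*}
 \int_{F}\left\langle \cartan^{k}\trace^{k}_{F} I^{k}_{\calP}\omega, \cartan^{k}\rho\right\rangle_{g}
 =
 \int_{F}\left\langle \cartan^{k}\trace^{k}_{F}\omega, \cartan^{k}\rho\right\rangle_{g},
 \quad F\in\calT.
\end{gather*}
This is exactly the \emph{cocycle} condition~\eqref{math:interpolator:alternative:cocycle} at level $k$ (with $\beta=\rho$), which holds because $I^{k}_{\calP}\omega$ satisfies the level-$k$ conditions of Lemma~\ref{lemma:interpolater:alternative}.

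The only remaining family is the Whitney condition~\eqref{math:interpolator:alternative:whitney} at level $k+1$, to be checked over $F\in\calT^{k+1}$. Here I would invoke Stokes' theorem in the form $\int_{F}\cartan^{k}\eta = \int_{\partial_{k+1}F}\eta$, together with $\trace^{k+1}_{F}\cartan^{k}=\cartan^{k}\trace^{k}_{F}$, to write
\begin{gather*}
 \int_{F}\trace^{k+1}_{F}\cartan^{k} I^{k}_{\calP}\omega
 =
 \int_{\partial_{k+1}F} I^{k}_{\calP}\omega
 =
 \sum_{f\in\Delta(F)^{k}} o(f,F)\int_{f}\trace^{k}_{f} I^{k}_{\calP}\omega.
\end{gather*}
By the level-$k$ Whitney condition~\eqref{math:interpolator:alternative:whitney} satisfied by $I^{k}_{\calP}\omega$, each summand equals $o(f,F)\int_{f}\trace^{k}_{f}\omega$; reassembling the boundary chain and applying Stokes once more gives $\int_{F}\trace^{k+1}_{F}\cartan^{k}\omega$, as required. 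With all three families verified, Lemma~\ref{lemma:interpolater:alternative} yields $\cartan^{k} I^{k}_{\calP}\omega = I^{k+1}_{\calP}\cartan^{k}\omega$.

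The substance is purely structural, so I expect the only real obstacle to be the index bookkeeping — in particular the observation that the cycle conditions at level $k+1$ coincide with the cocycle conditions at level $k$, which is precisely what couples the two families produced by the Riesz/Hodge splitting in Lemma~\ref{lemma:dofrepresentation:advanced}. Once this matching is noticed, no genuine computation remains. (If the commuting property is also wanted on the complexes with boundary conditions, it follows immediately, since $I^{k}_{\calP}$ preserves $\calP\Lambda^{k}(\calT,\calU)$ by Lemma~\ref{lemma:interpolator:boundarycondition}.)
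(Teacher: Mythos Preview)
Your proof is correct and follows essentially the same approach as the paper: both arguments invoke Lemma~\ref{lemma:interpolater:alternative} at level $k+1$ and verify the three families of conditions by reducing the Whitney condition via Stokes' theorem, matching the cycle condition at level $k+1$ to the cocycle condition at level $k$, and noting that the cocycle condition at level $k+1$ is vacuous because $\cartan^{k+1}\cartan^{k}=0$. The only cosmetic difference is that the paper routes the Whitney computation through $J^{k}_{\calW}\omega$ explicitly, whereas you appeal directly to the level-$k$ Whitney condition satisfied by $I^{k}_{\calP}\omega$; both are equivalent.
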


\begin{proof}
 Let $\omega \in C^{\infty}\Lambda^{k}(\calT,\calU)$.
 For $F \in \calT^{k+1}$ we observe 
 \begin{align*}
  \int_F \trace^{k+1}_{F} \cartan^{k  } I^{k}_{\calP} \omega
  &=
  \int_F \trace^{k+1}_{F} \cartan^{k  } J^{k}_{\calW} \omega
  =
  \int_F \cartan^{k  } \trace^{k}_{F} J^{k}_{\calW} \omega
  \\&=
  \int_{\partial F} \trace^{k}_{F} J^{k}_{\calW} \omega 
  =
  \int_{\partial F} \trace^{k}_{F} \omega
  \\&=
  \int_{F} \cartan^{k  } \trace^{k}_{F} \omega 
  =
  \int_{F} \trace^{k+1}_{F} \cartan^{k  } \omega 
  =
  \int_{F} \trace^{k+1}_{F} J^{k+1}_{\calW} \cartan^{k  } \omega 
  =
  \int_{F} \trace^{k+1}_{F} I^{k+1}_{\calP} \cartan^{k  } \omega 
  .
 \end{align*}
 Let $F \in \calT^{m}$ with $k \leq m \leq n$.
 For $\rho \in \mathring\calP\Lambda^{k}(F)$ we find  
 \begin{align*}
  \int_{F} \left\langle I^{k+1}_{\calP} \cartan^{k} \omega, \cartan^{k} \rho \right\rangle_{g} 
  &=
  \int_{F} \left\langle \cartan^{k} \omega, \cartan^{k} \rho \right\rangle_{g} 
  \\&=
  \int_{F} \left\langle \cartan^{k}  I^{k}_{\calP} \omega, \cartan^{k} \rho \right\rangle_{g} 
  =
  \int_{F} \left\langle \cartan^{k}  I^{k}_{\calP} \omega, \cartan^{k} \rho \right\rangle_{g} 
  . 
 \end{align*}
 For $\beta \in \mathring\calP\Lambda^{k+1}(F)$ we find 
 \begin{align*}
  \int_{F} \left\langle \cartan^{k+1}  I^{k+1}_{\calP} \cartan^{k} \omega, \cartan^{k+1} \beta \right\rangle_{g} 
  &=
  \int_{F} \left\langle \cartan^{k+1} \cartan^{k} \omega, \cartan^{k+1} \beta \right\rangle_{g} 
  \\&=
  \int_{F} \left\langle  \cartan^{k+1} \cartan^{k} I^{k}_{\calP} \omega, \cartan^{k+1} \beta \right\rangle_{g} 
  =
  0
  . 
 \end{align*}
 In conjunction with Lemma~\ref{lemma:interpolater:alternative}, the desired result follows. 
\end{proof}

\section{Partially Localized Flux Reconstruction} \label{sec:fluxreconstruction}

In this section, we approach the main result of this article. 
Our investigations on the structure of finite element spaces allow us to formalize a partially localized method of flux reconstruction.
The flux reconstruction solves the first-order differential equation $\cartan^{k-1}\xi = \omega$, 
where $\omega \in \calP\Lambda^{k}(\calT,\calU)$ is the data and $\xi \in \calP\Lambda^{k-1}(\calT,\calU)$ is the unknown. 
Assuming that a solution exists, we wish to efficiently compute one of the possible solutions.
Problems of this type appear in a~posteriori error estimation. 

The problem of flux reconstruction amounts to determining a generalized inverse 
of the operator $\cartan^{k-1} : \calP\Lambda^{k-1}(\calT,\calU) \rightarrow \calP\Lambda^{k}(\calT,\calU)$.
In this article we contribute a method to reduce this problem to the lowest-order case. 
It then only remains to find a generalized inverse of 
$\cartan^{k-1} : \calW\Lambda^{k-1}(\calT,\calU) \rightarrow \calW\Lambda^{k}(\calT,\calU)$.
The higher-order aspects of the problem are treated in local problems associated to simplices 
which are solved independently from each other.
This is a fundamental result on the structure of higher-order finite element spaces
that is not only of theoretical appeal but also relevant for numerical algorithms. 

Before we formulate the main result,
we introduce several generalized inverses. 
First, we fix a generalized inverse of the exterior derivative between Whitney forms. 
Specifically, we assume that we have a linear mapping 
\begin{align}
 \label{math:antiderivative:whitney}
 \sfP_{\calW}^{k} :
 \calW\Lambda^{k}(\calT,\calU) 
 \rightarrow
 \calW\Lambda^{k-1}(\calT,\calU)
\end{align}
such that 
\begin{align}
 \label{math:antiderivative:whitney:condition}
 \cartan^{k-1} \sfP_{\calW}^{k} \cartan^{k-1} \xi = \cartan^{k-1} \xi,
 \quad 
 \xi \in \calW\Lambda^{k-1}(\calT,\calU)
 .
\end{align}
In particular, $\omega = \cartan^{k-1} \sfP_{\calW}^{k} \omega$
whenever $\omega \in \calW\Lambda^{k}(\calT,\calU)$
is the exterior derivative of a Whitney form in $\calW\Lambda^{k-1}(\calT,\calU)$.
Similarly, for each simplex $F \in \calT$ we fix a generalized inverse
\begin{align}
 \label{math:antiderivative:simplex}
 \sfP_{F}^{k}
 :
 \underline{\mathring\calP}\Lambda^{k}(F)
 \rightarrow 
 \underline{\mathring\calP}\Lambda^{k-1}(F)
\end{align}
such that
\begin{align}
 \label{math:antiderivative:simplex:condition}
 \cartan^{k-1} \sfP_{F}^{k} \cartan^{k-1} \xi = \cartan^{k-1} \xi,
 \quad 
 \xi \in \underline{\mathring\calP}\Lambda^{k}(F).
\end{align}
We have $\omega = \cartan^{k-1} \sfP_{F}^{k} \omega$
whenever $\omega \in \underline{\mathring\calP}\Lambda^{k}(F)$
is the exterior derivative of a Whitney form in $\underline{\mathring\calP}\Lambda^{k-1}(F)$.
The existence of a mapping $\sfP_{\calW}^{k}$ and mappings $\sfP_{F}^{k}$ with such properties is elementary.

\begin{remark}
  There is no canonical choice of the generalized inverses. 
  However, upon fixing a Hilbert space structure on the Whitney forms, 
  the Moore-Penrose pseudoinverse of $\cartan^{k-1} : \calW\Lambda^{k-1}(\calT,\calU) \rightarrow \calW\Lambda^{k}(\calT,\calU)$
  is a natural choice that provides the least-squares solution of the problem. 
  Entirely analogous statements hold for choosing the generalized inverses $\sfP^{k}_{F}$. 
\end{remark}

Assuming to have fixed generalized inverses as above,
we provide the partially localized flux reconstruction without further ado.

\begin{theorem} \label{theorem:fluxreconstruction}
 Suppose that $\omega \in \calP\Lambda^{k}(\calT)$ with $\cartan^{k} \omega = 0$.
 For $m \in \{k,\dots,n\}$ we let 
 \begin{gather} \label{theorem:fluxreconstruction:higher}
  \xi^{m} 
  :=
  \sum_{ F \in \calT^{m} }
  \Ext_{F}^{k-1}
  \sfP_{F}^{k}
  \trace^{k}_{F}
  \left( 
   \omega - I^{k}_{\calW} \omega - \sum_{l=k}^{m-1} \cartan^{k-1} \xi^{l}
  \right)
  .
 \end{gather}
 Then
 \begin{gather} \label{theorem:fluxreconstruction:final}
  I^{k}_{\calW} \omega + \cartan^{k-1} \left( \sum_{m=k}^{n} \xi^{m} \right)
  =
  \omega
  .
 \end{gather}
 If there exists $\xi \in \calP\Lambda^{k-1}(\calT,\calU)$ with $\cartan^{k-1} \xi = \omega$, 
 then 
 \begin{gather} \label{theorem:fluxreconstruction:reallyfinal}
  \cartan^{k-1} \left( \sfP_{\calW}^{k} I^{k}_{\calW} \omega + \sum_{m=k}^{n} \xi^{m} \right)
  =
  \omega
  .
 \end{gather}
\end{theorem}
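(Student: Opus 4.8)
The plan is to prove \eqref{theorem:fluxreconstruction:final} by a single induction on $m$ that parallels the geometric decomposition of Theorem~\ref{theorem:geometricdecomposition}, and then to deduce \eqref{theorem:fluxreconstruction:reallyfinal} as a short corollary. Write $S_{m} := \omega - I^{k}_{\calW}\omega - \sum_{l=k}^{m}\cartan^{k-1}\xi^{l}$ for $m \in \{k,\dots,n\}$ and $S_{k-1} := \omega - I^{k}_{\calW}\omega$. The central claim is that
\[
 \trace^{k}_{F} S_{m} = 0 \quad\text{for every } F \in \calT \text{ with } \dim F \le m,
\]
and taking $m = n$ yields $S_{n} = 0$ in $C^{\infty}\Lambda^{k}(\calT)$, which is exactly \eqref{theorem:fluxreconstruction:final}.

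First I would record a few standing facts. Each $I^{k}_{\calW}\omega$ and each $\cartan^{k-1}\xi^{l}$ lies in $\calP\Lambda^{k}(\calT)$ (the former because the Whitney complex is contained in the higher-order complex, the latter by \eqref{math:globalextension:finiteelementrange} and \eqref{math:piecewisepolydiffform:cartan}), so every $S_{m} \in \calP\Lambda^{k}(\calT)$; the exterior derivative commutes with all trace operators; and $\cartan^{k} S_{m} = 0$, since $\cartan^{k}\omega = 0$, $\cartan^{k} I^{k}_{\calW}\omega = I^{k+1}_{\calW}\cartan^{k}\omega = 0$, and $\cartan^{k}\cartan^{k-1}\xi^{l} = 0$. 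A second preliminary observation, used repeatedly, is that for $\dim F < l$ the extension $\Ext^{k-1}_{F'}$ with $F' \in \calT^{l}$ vanishes on $F$ by locality \eqref{math:localextension:locality}, so $\trace^{k}_{F}\cartan^{k-1}\xi^{l} = \cartan^{k-1}\trace^{k-1}_{F}\xi^{l} = 0$; hence the tail terms with $l > \dim F$ never contribute to $\trace^{k}_{F}$.

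The induction then runs as follows. Set $\mathring\mu_{F} := \trace^{k}_{F} S_{m-1}$ for $F \in \calT^{m}$. I claim $\mathring\mu_{F} \in \underline{\mathring\calP}\Lambda^{k}(F)$: the inductive hypothesis gives $\trace^{k}_{f} S_{m-1} = 0$ for all proper faces $f$ of $F$, and by the trace-compatibility in $C^{\infty}\Lambda^{k}(\calT)$ these are precisely the traces of $\mathring\mu_{F}$ onto the proper faces of $F$, so $\mathring\mu_{F} \in \mathring\calP\Lambda^{k}(F)$; in the top case $\dim F = k$ (the base case $m=k$) the additional vanishing-integral condition holds because $\int_{F}\trace^{k}_{F} I^{k}_{\calW}\omega = \int_{F}\trace^{k}_{F}\omega$ by the defining property of the Whitney interpolant. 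Since also $\cartan^{k}\mathring\mu_{F} = \trace^{k+1}_{F}\cartan^{k} S_{m-1} = 0$, exactness of the reduced relative complex \eqref{math:polydiffcomplex:relative:reduced} shows that $\mathring\mu_{F}$ lies in the range of $\cartan^{k-1}$ on $\underline{\mathring\calP}\Lambda^{k-1}(F)$, whence $\cartan^{k-1}\sfP^{k}_{F}\mathring\mu_{F} = \mathring\mu_{F}$ by \eqref{math:antiderivative:simplex:condition}. Finally I would compute $\trace^{k}_{F}\cartan^{k-1}\xi^{m}$: commuting the trace past $\cartan^{k-1}$ and invoking locality \eqref{math:localextension:locality}, only the summand $F' = F$ survives (the sole $m$-face of $F$ is $F$ itself), and the identity property \eqref{math:localextension:identity} collapses it to $\cartan^{k-1}\sfP^{k}_{F}\mathring\mu_{F} = \mathring\mu_{F}$. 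Therefore $\trace^{k}_{F} S_{m} = \mathring\mu_{F} - \mathring\mu_{F} = 0$ for $\dim F = m$, while for $\dim F < m$ the preliminary observation gives $\trace^{k}_{F} S_{m} = \trace^{k}_{F} S_{m-1} = 0$; this closes the induction and establishes \eqref{theorem:fluxreconstruction:final}.

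For \eqref{theorem:fluxreconstruction:reallyfinal} it suffices, in view of \eqref{theorem:fluxreconstruction:final}, to show $\cartan^{k-1}\sfP^{k}_{\calW} I^{k}_{\calW}\omega = I^{k}_{\calW}\omega$. The hypothesis $\omega = \cartan^{k-1}\xi$ with $\xi \in \calP\Lambda^{k-1}(\calT,\calU)$ forces $\omega \in \calP\Lambda^{k}(\calT,\calU)$; applying $I_{\calW}$ and using that it commutes with $\cartan$ and preserves the boundary conditions gives $I^{k}_{\calW}\omega = \cartan^{k-1}(I^{k-1}_{\calW}\xi)$ with $I^{k-1}_{\calW}\xi \in \calW\Lambda^{k-1}(\calT,\calU)$, so $I^{k}_{\calW}\omega$ lies in the range of $\cartan^{k-1}$ on Whitney forms and \eqref{math:antiderivative:whitney:condition} yields the claim. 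Adding this to \eqref{theorem:fluxreconstruction:final} gives $\cartan^{k-1}\bigl(\sfP^{k}_{\calW} I^{k}_{\calW}\omega + \sum_{m=k}^{n}\xi^{m}\bigr) = I^{k}_{\calW}\omega + (\omega - I^{k}_{\calW}\omega) = \omega$. I expect the main obstacle to be the bookkeeping in the inductive step: verifying precisely that $\mathring\mu_{F}$ lands in $\underline{\mathring\calP}\Lambda^{k}(F)$ (the boundary conditions from the inductive hypothesis, and the vanishing-integral normalization in the top case from the Whitney interpolant) and is closed, since only then do exactness and the local generalized inverse reproduce it; the localization of $\trace^{k}_{F}\cartan^{k-1}\xi^{m}$ to the single term $F'=F$ via \eqref{math:localextension:locality}--\eqref{math:localextension:identity} is the other delicate point. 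Notably, the whole argument proceeds at the level of traces and therefore never requires the extension operators to commute with $\cartan$.
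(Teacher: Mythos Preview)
Your proposal is correct and follows essentially the same inductive scheme as the paper's proof: your $\mathring\mu_{F}$ is exactly the paper's $\theta_{F}$, and both arguments verify membership in $\underline{\mathring\calP}\Lambda^{k}(F)$, closedness via $\cartan^{k}\omega = 0$ and commutation of $I^{k}_{\calW}$ with $\cartan$, then invoke the local generalized inverse and locality of the extensions. One small citation quibble: the vanishing of $(\Ext^{k-1}_{F'}\,\cdot\,)_{F}$ for $\dim F' > \dim F$ comes directly from the definition~\eqref{math:globalextension} of $\Ext$ (the direct sum runs only over simplices containing $F'$), not from~\eqref{math:localextension:locality}, though the conclusion is unaffected.
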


\begin{proof}
 We use the modified geometric decomposition (Lemma~\ref{lemma:geometricdecomposition:modified}) to write 
 \begin{gather*}
  \omega = I^{k}_{\calW} \omega + \sum_{m=k}^{n} \sum_{F \in \calT^{m}} \Ext_{F}^{k} \mathring\omega_{F}
  , 
 \end{gather*}
 where $\mathring\omega_F \in \underline{\mathring\calP}\Lambda^{k}(F)$ for each $F \in \calT$.
 We thus find for $F \in \calT^{k}$ that 
 \begin{gather*}
  \trace^{k}_{F} \left( \omega - I^{k}_{\calW} \omega \right)
  \in
  \underline{\mathring\calP}\Lambda^{k}(F).
 \end{gather*}
 The proof will be completed by an induction argument. 
 For each $F \in \calT$ we set  
 \begin{gather*}
  \theta_F := \trace^{k}_{F} \left( \omega - I^{k}_{\calW} \omega - \sum_{l=k}^{\dim F - 1} \cartan^{k-1} \xi^{l} \right)
  .
 \end{gather*}
 Let $m \in \{ k, \dots, n-1 \}$.
 Suppose that $\theta_f \in \underline{\mathring\calP}\Lambda^{k}(f)$ for each $f \in \calT^{m}$,
 which is certainly true if $m = k$.
 Then $\xi^{m}$ as in~\eqref{theorem:fluxreconstruction:higher} is well-defined.
 By assumptions on $\omega$ we find   
 \begin{align*}
  \cartan^{k} \theta_{f}
  &= 
  \cartan^{k} \trace^{k}_{f}
  \left( 
   \omega - I^{k}_{\calW} \omega - \sum_{l=k}^{m-1} \cartan^{k-1} \xi^{l}
  \right)
  \\&= 
  \trace^{k}_{f}
  \left( 
   \cartan^{k} \omega - \cartan^{k} I^{k}_{\calW} \omega - \cartan^{k} \sum_{l=k}^{m-1} \cartan^{k-1} \xi^{l}
  \right)
  =
  \trace^{k}_{f}
  \left( 
   \cartan^{k} \omega - I^{k+1}_{\calW} \cartan^{k} \omega
  \right)
  =
  0
  ,
 \end{align*}
 and conclude that $\cartan^{k-1} P_f^{k} \theta_f = \theta_f$. 
 In particular, 
 \begin{align}
  \label{math:beweis:temp}
  \trace^{k}_{f} \cartan^{k-1} \xi^{m}
  =
  \cartan^{k-1} P_{f}
  \theta_{f}
=
  \trace^{k}_{f}
  \left( 
   \omega - I^{k}_{\calW} \omega - \sum_{l=k}^{m-1} \cartan^{k-1} \xi^{l}
  \right)
  .
 \end{align}
 If $m < n$, then $\theta_F \in \underline{\mathring\calP}\Lambda^{k}(F)$ for each $F \in \calT^{m+1}$.
 The argument may be iterated until $m = n$. 
 In the latter case~\eqref{math:beweis:temp} provides~\eqref{theorem:fluxreconstruction:final}.
 
 Finally, if there exists $\xi \in \calP\Lambda^{k-1}(\calT,\calU)$ with $\cartan^{k-1} \xi = \omega$, then 
 \begin{gather*}
  I^{k}_{\calW} \omega = I^{k}_{\calW} \cartan^{k-1} \xi = \cartan^{k-1} I^{k-1}_{\calW} \xi.
 \end{gather*}
 and hence $\cartan^{k-1} \sfP_{\calW}^{k} I^{k}_{\calW} \xi = I^{k-1}_{\calW} \xi$,
 which shows~\eqref{theorem:fluxreconstruction:reallyfinal}.
 This completes the proof. 
\end{proof}

The theorem states that for every $\omega \in \calP\Lambda^{k}(\calT,\calU)$ with $\cartan^{k} \omega = 0$
there exists $\xi^{hi} \in \calP\Lambda^{k-1}(\calT,\calU)$ such that $\omega = I^{k}_{\calW} \omega + \cartan^{k} \xi^{hi}$.
If additionally $\omega$ is the exterior derivative of a member of $\calP\Lambda^{k-1}(\calT,\calU)$, 
then there exists $\xi^{lo} \in \calW\Lambda^{k-1}(\calT,\calU)$ with $\cartan^{k-1} \xi^{lo} = I^{k}_{\calW} \omega$,
so that $\xi := \xi^{lo} + \xi^{hi}$ is a solution of $\cartan^{k-1} \xi = \omega$. 

As a simple first application we address the dimension of the cohomology classes of the finite element de~Rham complex. 
This is a new proof of a known result~\cite{AFW2,StructPresDisc,licht2016complexes}. 
Conceptually, this shows that all cohomological information is completely encoded in the lowest-order component of the finite element de~Rham complex.  

\begin{lemma}
 The commuting interpolator $I^{k}_{\calW} : \calP\Lambda^{k}(\calT,\calU) \rightarrow \calW\Lambda^{k}(\calT,\calU)$
 induces isomorphisms on cohomology.
\end{lemma}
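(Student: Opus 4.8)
The plan is to exhibit $I^{k}_{\calW}$ as a chain retraction of the higher-order complex onto the Whitney complex and then invoke the partially localized flux reconstruction of Theorem~\ref{theorem:fluxreconstruction} to control its kernel on cohomology. First I would record the structural facts that make $I^{k}_{\calW}$ a morphism of complexes in the relevant sense: it commutes with $\cartan^{k}$ (as shown in Section~\ref{sec:whitneycomplex}), it restricts to a map $\calP\Lambda^{k}(\calT,\calU) \to \calW\Lambda^{k}(\calT,\calU)$ respecting the boundary conditions (since $\calP\Lambda^{k}(\calT,\calU) \subseteq C^{\infty}\Lambda^{k}(\calT,\calU)$), and it acts as the identity on $\calW\Lambda^{k}(\calT,\calU)$. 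Moreover, the geometric decomposition (Theorem~\ref{theorem:geometricdecomposition} and the ensuing direct-sum decomposition) shows that $\calW\Lambda^{k}(\calT,\calU)$ is a direct summand of $\calP\Lambda^{k}(\calT,\calU)$, so the inclusion $\iota^{k}$ is itself a chain map. Writing $\left(I_{\calW}\right)^{\ast}$ and $\iota^{\ast}$ for the induced maps on cohomology, the identity $I^{k}_{\calW}\iota^{k} = \operatorname{id}$ gives $\left(I_{\calW}\right)^{\ast}\iota^{\ast} = \operatorname{id}$; hence $\left(I_{\calW}\right)^{\ast}$ is surjective in each degree, and it remains only to prove injectivity.

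For injectivity, I would take $\omega \in \calP\Lambda^{k}(\calT,\calU)$ with $\cartan^{k}\omega = 0$ whose class is annihilated by $\left(I_{\calW}\right)^{\ast}$, that is, $I^{k}_{\calW}\omega = \cartan^{k-1}\zeta$ for some $\zeta \in \calW\Lambda^{k-1}(\calT,\calU)$, and show that $\omega$ is exact in the higher-order complex. Here the flux reconstruction enters: applying the boundary-conditioned form of Theorem~\ref{theorem:fluxreconstruction} (the version recorded in the discussion following its proof) to the closed form $\omega$ yields $\xi^{\mathrm{hi}} := \sum_{m=k}^{n}\xi^{m} \in \calP\Lambda^{k-1}(\calT,\calU)$ with $\omega = I^{k}_{\calW}\omega + \cartan^{k-1}\xi^{\mathrm{hi}}$. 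Substituting $I^{k}_{\calW}\omega = \cartan^{k-1}\zeta$ and using $\zeta \in \calW\Lambda^{k-1}(\calT,\calU) \subseteq \calP\Lambda^{k-1}(\calT,\calU)$ gives $\omega = \cartan^{k-1}\left(\zeta + \xi^{\mathrm{hi}}\right)$ with $\zeta + \xi^{\mathrm{hi}} \in \calP\Lambda^{k-1}(\calT,\calU)$, so the class of $\omega$ vanishes. In degree $k = 0$ the corrections $\xi^{m}$ vanish identically and the flux reconstruction degenerates to $\omega = I^{0}_{\calW}\omega$, so injectivity is immediate there.

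Combining the two steps, $\left(I_{\calW}\right)^{\ast}$ is both surjective and injective in every degree, hence an isomorphism on cohomology, which is the assertion. I expect the only genuine subtlety to be bookkeeping rather than conceptual: one must ensure that the reconstructed potential $\xi^{\mathrm{hi}}$ and the Whitney potential $\zeta$ both lie in the \emph{relative} spaces $\calP\Lambda^{k-1}(\calT,\calU)$, so that $\zeta + \xi^{\mathrm{hi}}$ witnesses exactness within the boundary-conditioned complex and not merely the free one; this is exactly what the $\calU$-respecting form of the flux reconstruction and the boundary-condition compatibility of $I_{\calW}$ (cf.\ Lemma~\ref{lemma:geometricdecomposition:boundary}) guarantee. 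All of the substantive work has been discharged into Theorem~\ref{theorem:fluxreconstruction}, so the remainder is formal homological algebra.
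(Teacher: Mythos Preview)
Your proof is correct and follows essentially the same approach as the paper: surjectivity of $(I_{\calW})^{\ast}$ comes from $I^{k}_{\calW}$ being a retraction onto the Whitney subcomplex, and injectivity is obtained by applying the flux reconstruction of Theorem~\ref{theorem:fluxreconstruction} to write any closed $\omega$ as $I^{k}_{\calW}\omega + \cartan^{k-1}\xi^{\mathrm{hi}}$, so that Whitney-exactness of $I^{k}_{\calW}\omega$ forces $\calP$-exactness of $\omega$. Your phrasing as a direct argument (rather than the paper's contrapositive) and your explicit tracking of the boundary conditions on $\xi^{\mathrm{hi}}$ and $\zeta$ are cosmetic differences, not substantive ones.
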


\begin{proof}
 Let $\omega \in \calW\Lambda^{k}(\calT,\calU)$ with $\cartan^{k}\omega = 0$.
 If $\omega \notin \cartan^{k-1} \calW\Lambda^{k-1}(\calT,\calU)$, 
 then $\omega \notin \cartan^{k-1} \calP\Lambda^{k-1}(\calT,\calU)$,
 since the canonical interpolant commutes with the exterior derivative. 
Hence $I^{k}_{\calW}$ induces a surjection on cohomology. 
Conversely, suppose that $\omega \in \calP\Lambda^{k}(\calT,\calU)$ and $\omega \notin \cartan^{k-1}\calP\Lambda^{k-1}(\calT,\calU)$.
 There exists $\xi \in \calP\Lambda^{k-1}(\calT,\calU)$
such that $\omega = \cartan^{k-1}\xi + I^{k}_{\calW} \omega$.
 Now $\omega \notin \cartan^{k-1}\calP\Lambda^{k-1}(\calT,\calU)$
 implies $I^{k}_{\calW} \omega \notin \cartan^{k-1}\calW\Lambda^{k-1}(\calT,\calU)$.
 Hence $I^{k}_{\calW}$ is injective on cohomology. 
This completes the proof.
\end{proof}

The partially localized flux reconstruction is relevant from a computational point of view, too. 
In order to compute a solution of $\cartan^{k-1} \xi = \omega$ for given $\omega \in \calP\Lambda^{k}(\calT,\calU)$
we treat this first-order equation as a least-squares problem. 
This means that we fix a Hilbert space structure on the finite element spaces 
and compute the action of the Moore-Penrose pseudoinverse of
$\cartan^{k-1} : \calP\Lambda^{k-1}(\calT,\calU) \rightarrow \calP\Lambda^{k}(\calT,\calU)$.
This is a standard topic of numerical linear algebra,
but the spectral properties of the operator $\cartan^{k-1} : \calP\Lambda^{k-1}(\calT,\calU) \rightarrow \calP\Lambda^{k}(\calT,\calU)$
for higher polynomial order can be disadvantageous. 
The condition number of the least-squares problem grows algebraically with the polynomial degree, 
which negatively affects the performance of the numerical methods. 
The complexity of the problem over higher-order spaces is comparable to the complexity of computing the flux variable in a mixed finite element method. 

But Theorem~\ref{theorem:fluxreconstruction} shows us how to avoid solving a global problem over a higher-order finite element space. 
We split the main problem into two independent subproblems:  
one subproblem involving Whitney forms and another subproblem involving higher-order contributions.
In the former subproblem we seek a flux reconstruction $\xi^{lo} \in \calW\Lambda^{k-1}(\calT,\calU)$
for $I^{k}_{\calP} \omega \in \calW\Lambda^{k}(\calT,\calU)$. 
Hence we still need to solve a global least-squares problem, but this time only 
for the operator $\cartan^{k-1} : \calW\Lambda^{k-1}(\calT,\calU) \rightarrow \calW\Lambda^{k}(\calT,\calU)$
over finite element spaces of lowest order.
In the second subproblem we calculate $\xi^{hi}$ by iterating over the dimension of the simplices in $\calT$ from lowest to highest; 
each step comprises of solving a block of mutually independent local subproblems.
In particular, each of these blocks is amenable to parallelization. 

In this sense the flux reconstruction is partially localized: 
the only remaining global operation involves a finite element space of merely lowest order instead of the full finite element space. 
A fully localized flux reconstruction is feasible when additional structure is provided; this will be crucial to our application in the next section. 

\begin{remark} \label{rem:fluxstability} 
  The $L^{2}$ stability of the global lowest-order problem depends only on the mesh quality and the domain,
  and the $L^{2}$ stability of the local problems depends only on the mesh quality and the polynomial order.
\end{remark}
 
\begin{example}
 \label{example:fluxreconstruction}
 We illustrate the partially localized flux reconstruction with two-dimensional finite element de~Rham complexes. 
 A completely analogous three-dimensional example is possible as well. 
 Assume that $\Omega \subset \bbR^{2}$ is a bounded simply-connected Lipschitz domain, 
 that $\calT$ is a triangulation of $\Omega$,
 and that $\calU \subset \calT$ triangulates $\partial\Omega$.
 
 We let $\calP_{r}(\calT,\calU)$ denote the functions over $\Omega$ that are piecewise polynomial of order $r$ with respect to $\calT$
 and satisfy Dirichlet boundary conditions. 
 We let $\calP_{r,\mathsf{DC}}(\calT)$ be the functions over $\Omega$ that are piecewise polynomial of order $r$ with respect to $\calT$.
 We let $\calP_{r}(\calT,\calU) \subseteq \calP_{r,\mathsf{DC}}(\calT)$ be the subspace whose members 
 have a square-integrable weak gradient and satisfy Dirichlet boundary conditions. 
 We let $\RT_{r}(\calT,\calU)$ and $\Ned_{r}(\calT,\calU)$ be the (divergence-conforming) Raviart-Thomas space over $\calT$ with normal boundary conditions along $\partial\Omega$.
At this point, we recall the divergence operator and the vector-valued curl operator;
 see also the next section for more details. 
 
 First, we perform the flux reconstruction for the divergence. 
 Let $f_h \in \calP_{r,\mathsf{DC}}(\calT)$ be a function over $\Omega$ that is piecewise in $\calP_r(\calT)$ and satisfies $\int_{\Omega} f_h = 0$.
 Then there exists $\xi_h \in {\RT}_{r}(\calT,\calU)$, generally not unique, with $\divergence \xi_h = f_h$. 
 To compute such a vector field, let $f_h' \in \calP_{r,\mathsf{DC}}(\calT)$ be the $L^{2}$ projection of $f_h$ onto the piecewise constant functions. 
 Then $\int_{\Omega} f_h' = \int_{\Omega} f_h = 0$, 
 and hence there exists $\xi'_h \in {\RT}_{0}(\calT,\calU)$ with vanishing normal components along $\partial\Omega$
 and $\divergence \xi'_h = f_h'$.
 Next, we let $f_h'' := f_h - f_h'$.
 For each $T \in \calT^{2}$ we have $\int_T f_h'' = 0$ by construction; 
 hence there exists $\xi_{T}'' \in \mathring{{\RT}}_{r}(T)$ with $\divergence \xi_{T}'' = f_{h|T}''$.
 We let $\xi_{h}'' := \sum_{T \in \calT^{2}} \xi_{T}''$ and $\xi_h := \xi_h' + \xi_h''$.
 Then $\xi_h  \in {\RT}_{r}(\calT,\calU)$ is the desired flux reconstruction. 
 
 Next, we explain the analogous flux reconstruction for the two-dimensional curl operator. 
 Suppose that $\theta_h \in {\RT}_{r}(\calT,\calU)$ is the curl of a member of $\calP_{r+1}(\calT,\calU)$.
 We let $\theta_h' \in {\RT}_{0}(\calT,\calU)$ be the canonical interpolation onto the lowest-order Raviart-Thomas space. 
 Since the canonical interpolation commutes with differential operators, 
 there exists $\sigma_h' \in \calP_1(\calT,\calU)$ with $\divergence \sigma_h' = \theta_h'$.
Let $\theta_h'' := \theta_h - \theta_h'$. 
 Note that $\theta_h''$ has a well-defined normal trace over the edges of $\calT$.  
 For every edge $E \in \calT^{1}$ we have $\int_E \vecn_E \cdot \theta_h'' = 0$.
 By taking a preimage with respect to differentiation of the function $\vecn_E \cdot \theta''$ over the edge $E$ 
 and extending the result onto the triangles that contain $E$,
 we conclude that there exists $\sigma_E'' \in \calP_{r+1}(\calT,\calU)$ supported on the two triangles adjacent to $E$
 with $\vecn_E \cdot ( \theta_h'' - \curl \sigma_E'' ) = 0$.
 We let $\sigma_h'' = \sum_{E \in \calT^{1}} \sigma_E''$ and let $\theta_h''' := \theta'' - \curl \sigma_h''$.
 By construction, we can write $\theta_h''' = \sum_{T \in \calT^{2}} \theta_{T}'''$ 
 where for each $T \in \calT^{2}$ we have $\theta_{T}''' \in \mathring{{\RT}}_{r}(T)$ and $\divergence \theta_T''' = 0$.
 For each triangle $T \in \calT^{2}$ there exists $\sigma_T''' \in \mathring \calP_{r+1}(T)$ with $\curl \sigma_T''' = \theta_T'''$.
 We set $\sigma_h''' = \sum_{T \in \calT^{2}} \sigma_T'''$.
 Eventually, we let $\sigma_h := \sigma_h' + \sigma_h'' + \sigma_h'''$
 and observe $\curl \sigma_h = \theta_h$.
\end{example}

\section{Applications in A Posteriori Error Estimation} \label{sec:application}

We apply the partially localized flux reconstruction 
to obtain a fully localized flux reconstruction for equilibrated a~posteriori estimation. 
We illustrate the idea with the $\curl$-$\curl$ equation over a two-dimensional domain. 
Three-dimensional domains are handled analogously. 
We generalize the equilibrated a~posteriori error estimator for N\'ed\'elec elements of~\cite{BrSchoMax}
from the case of lowest-order to case of higher and possibly non-uniform polynomial order. 
The following discussion focuses on the theoretical framework. 
Extensive computational studies will be subject of subsequent work. 
\sectionintrolinebreak

This section extends the discussion in Example~\ref{example:fluxreconstruction}. 
Let $\Omega \subseteq \bbR^{2}$ be a bounded Lipschitz domain. We let $L^{2}(\Omega)$ and $\bfL^{2}(\Omega) := L^{2}(\Omega)^{2}$ denote the Hilbert spaces 
of square-integrable functions and vector fields, respectively, over $\Omega$. 
The corresponding scalar products and norms are written $\langle\cdot,\cdot\rangle_{L^{2}}$ and $\|\cdot\|_{L^{2}}$, respectively. 
We let $H^{1}(\Omega)$ be the first-order Sobolev space 
and let $\bfH(\divergence,\Omega)$ be the space of square-integrable vector fields with divergence in $L^{2}(\Omega)$.
These are Hilbert spaces endowed with the respective graph scalar products of the gradient and the divergence, 
\begin{gather*}
 \grad : H^{1}(\Omega) \rightarrow \bfL^{2}(\Omega), \quad \omega \mapsto ( \partial_x \omega, \partial_y \omega ),
 \\
 \divergence : \bfH(\divergence,\Omega) \rightarrow L^{2}(\Omega), \quad (u,v) \mapsto \partial_x u + \partial_y v.
\end{gather*}
Consider the isometry $J : \bfL^{2}(\Omega) \rightarrow \bfL^{2}(\Omega)$ 
which rotates each vector field by a right angle counterclockwise, i.e. $J(u,v) = (-v,u)$ for $(u,v) \in \bfL^{2}(\Omega)$.
We introduce  
\begin{gather*}
 \bfH(\curl,\Omega) := J^{\inv}\bfH(\divergence,\Omega)
\end{gather*}
and introduce the differential operators 
\begin{gather*}
 \curl : \bfH(\curl,\Omega) \rightarrow L^{2}(\Omega), \quad \nu \mapsto \divergence J \nu,
 \\
 \curl : H^{1}(\Omega) \rightarrow \bfL^{2}(\Omega), \quad \tau \mapsto J \grad \tau.
\end{gather*}
We have Hilbert spaces $H^{1}(\Omega)$ and $\bfH(\curl,\Omega)$ with the respective graph scalar products. 
To formalize boundary conditions, we let $H^{1}_{0}(\Omega)$, $\bfH_{0}(\divergence,\Omega)$ and $\bfH_{0}(\curl,\Omega)$ 
denote the closure of the compactly supported smooth scalar or vector fields
over $\Omega$ in $H^{1}(\Omega)$, $\bfH(\divergence,\Omega)$ and $\bfH(\curl,\Omega)$, respectively.
It is easy to see that we have well-defined differential complexes 
\begin{gather}
\label{math:hilbertcomplex:primal}
 \begin{CD}
  0 \to \bbR @>>> H^{1}(\Omega) @>{\grad}>> \bfH(\curl,\Omega) @>\curl>> L^{2}(\Omega) \to 0,
 \end{CD}
\\
 \label{math:hilbertcomplex:dual}
 \begin{CD}
  0 \gets \bbR @<{\int}<< L^{2}(\Omega) @<{\divergence}<< \bfH_{0}(\divergence,\Omega) @<\curl<< H^{1}_{0}(\Omega) \gets 0.
 \end{CD}
\end{gather}
Here, the differential operators have closed range 
and both differential complexes are mutually $L^{2}$ adjoint
(as Hilbert complexes in the sense of~\cite{bruening1992hilbert}). 
If moreover the domain is simply-connected, then the differential complexes 
\eqref{math:hilbertcomplex:primal} and~\eqref{math:hilbertcomplex:dual} are exact~\cite{AFW1}.
We also recall the integration by parts formulas 
\begin{gather}
 \label{math:ibp:curlcurl}
 \langle \curl \nu, \tau \rangle_{L^{2}} = \langle \nu, \curl \tau \rangle_{L^{2}},
 \quad 
 \nu \in \bfH(\curl,\Omega), \quad \tau \in H^{1}_{0}(\Omega),
 \\
 \label{math:ibp:graddiv}
 \langle \grad v, \nu \rangle_{L^{2}} = - \langle v, \divergence \nu \rangle_{L^{2}},
 \quad 
 v \in H^{1}(\Omega), \quad \nu \in \bfH_{0}(\divergence,\Omega).
\end{gather}
The $\curl$-$\curl$ problem is to find a vector field $\upsilon$ that satisfies $\curl \curl \upsilon = \theta$ for a given vector field $\theta$.
We consider the weak formulation over Sobolev spaces of vector fields, 
where we assume that $\theta \in \bfL^{2}(\Omega)$ and search for $\upsilon \in \bfH(\curl,\Omega)$ with 
\begin{gather} \label{math:curlcurlbeispiel}
 \langle \curl \upsilon, \curl \nu \rangle_{L^{2}}
 =
 \langle \theta, \nu \rangle_{L^{2}},
 \quad 
 \nu \in \bfH(\curl,\Omega).
\end{gather}
Solutions of~\eqref{math:curlcurlbeispiel} are generally not unique, 
because the $\curl$ operator has a non-trivial kernel.
To ensure uniqueness, one may impose $\upsilon$ to be orthogonal to the gradients of functions in $H^{1}(\Omega)$,
which enforces $\upsilon \in \bfH_{0}(\divergence,\Omega)$ with $\divergence \upsilon = 0$.
Singling out a specific solution, however, is only peripheral to our discussion. 

If we additionally assume that $\theta \in \bfH_{0}(\divergence,\Omega)$ with $\divergence \theta = 0$,
so that $\theta$ is the $\curl$ of a scalar function in $H^{1}_{0}(\Omega)$,
then basic results in operator theory already imply that $\curl \upsilon \in H^{1}_{0}(\Omega)$ with $\curl \curl \upsilon = \theta$,
that is, the weak solution $\upsilon$ of~\eqref{math:curlcurlbeispiel} even is a strong solution. 
\\

In order to address a~posteriori error estimation 
we fix a solution $\upsilon \in \bfH(\curl,\Omega)$ and let $\upsilon_h \in \bfH(\curl,\Omega)$ be arbitrary. 
We let $\sigma \in H^{1}_{0}(\Omega)$ with $\curl \sigma = \theta$.
Now, 
\begin{align*}
 &\| \sigma - \curl \upsilon_h \|^{2}_{L^{2}}
\\&\quad=
 \| \sigma - \curl \upsilon \|^{2}_{L^{2}}
 +
 \| \curl \upsilon - \curl \upsilon_h \|^{2}_{L^{2}}
- 
 2 \left\langle \sigma - \curl \upsilon, \curl \upsilon - \curl \upsilon_h \right\rangle_{L^{2}}
 . 
\end{align*}
Using~\eqref{math:ibp:curlcurl} and $\curl \sigma = \theta = \curl \upsilon$,
we note   
\begin{align*}
 &
 \langle \sigma - \curl \upsilon, \curl \upsilon - \curl \upsilon_h \rangle_{L^{2}}
 \\&\quad
 =
 \langle \curl ( \sigma - \curl \upsilon ), \upsilon - \upsilon_h \rangle_{L^{2}}
 =
 \langle \theta - \theta, \upsilon - \upsilon_h \rangle_{L^{2}}
 =
 0
 .
\end{align*}
Thus,
\begin{align} \label{math:pragersynge}
 \| \sigma - \curl \upsilon_h \|^{2}_{L^{2}}
 =
 \| \sigma - \curl \upsilon \|^{2}_{L^{2}}
 +
 \| \curl \upsilon - \curl \upsilon_h \|^{2}_{L^{2}}
 .  
\end{align}
Equation~\eqref{math:pragersynge} is a generalized Prager-Synge identity~\cite{BrSchoMax}.

We utilize this result as follows.
Let $\upsilon \in \bfH(\curl,\Omega)$ with $\curl\upsilon \in H^{1}_{0}(\Omega)$ be a strong solution of~\eqref{math:curlcurlbeispiel}.
Given any exact solution $\sigma \in H^{1}_{0}(\Omega)$ of $\curl \sigma = \theta$ 
and any $\upsilon_h \in \bfH(\curl,\Omega)$,
it now follows that 
\begin{gather} \label{math:pragersynge:application}
 \| \sigma - \curl \upsilon_h \|_{L^{2}}
 \geq 
 \| \curl \upsilon - \curl \upsilon_h \|_{L^{2}}
 .
\end{gather}
The left-hand side of~\eqref{math:pragersynge:application} is given in terms of known objects 
and dominates the right-hand side of~\eqref{math:pragersynge:application},
which depends on the generally unknown true solution $\upsilon$.
If $\upsilon_h$ is seen as an approximation of $\upsilon$, 
then we see~\eqref{math:pragersynge:application} as an error estimate for the derivatives.

In a typical application, $\upsilon_h$ is the Galerkin solution of a finite element method.
If we have an exact solution $\sigma \in H^{1}_{0}(\Omega)$ of $\curl \sigma = \theta$, 
then~\eqref{math:pragersynge:application} gives an upper bound on one component of the error in the $\bfH(\curl,\Omega)$ norm. 
The exact flux $\curl \upsilon$ is generally unknown and hence not a candidate for $\sigma$,
but we can find a candidate via flux reconstruction. 
\\ 
 
As a technical preparation, we consider finite element de~Rham complexes over the domain $\Omega$.
Let $\calT$ be a simplicial complex triangulating $\Omega$
and let $\calU$ denote the subcomplex of $\calT$ triangulating $\partial\Omega$.
The latter is merely a finite set of line segments in this case. 
We focus on higher-order finite element spaces of uniform order;
the generalization to spaces of non-uniform polynomial order is straight forward. 
Let $r \in \bbN_0$ and recall the N\'ed\'elec space $\Ned_{r}(\calT)$ of polynomial order $r$ with respect to $\calT$. 
Consider the finite element de~Rham complexes  
\begin{gather*}
 \begin{CD}
  0 \to \bbR @>>> \calP_{r+1}(\calT) @>\grad>> \Ned_{r}(\calT) @>\curl>> \calP_{r,\mathrm{DC}}(\calT) \to 0  
 \end{CD}
\end{gather*}
and 
\begin{gather*}
 \begin{CD}
  0 \gets \bbR @<\int<< \calP_{r,\mathrm{DC}}(\calT) @<\divergence<< \RT_{r}(\calT,\calU) @<\curl<< \calP_{r+1}(\calT,\calU) \gets 0
  .
 \end{CD}
\end{gather*}
The first is a finite-di\-men\-sio\-nal subcomplex of~\eqref{math:hilbertcomplex:primal}
and the second is a finite-di\-men\-sio\-nal subcomplex of~\eqref{math:hilbertcomplex:dual}.

Let $\theta \in \bfH_{0}(\divergence,\Omega)$ be as before 
but assume additionally that $\theta \in \RT_{r}(\calT,\calU)$.
Then there exists a member of $\calP_{r+1}(\calT,\calU)$ whose curl equals $\theta$.
In order to utilize the Prager-Synge identity and estimate~\eqref{math:pragersynge:application},
it remains to algorithmically construct a generalized inverse for the operator 
\begin{gather}
 \label{math:finiteelementcurl}
 \curl : \calP_{r+1}(\calT,\calU) \rightarrow \RT_{r}(\calT,\calU).
\end{gather}
This is achieved via partially localized flux reconstruction. 
Using the construction in Example~\ref{example:fluxreconstruction}, 
we decompose 
\begin{gather*}
 \theta = \theta_0 + \curl \xi_r, 
\end{gather*}
where $\theta_0 \in \RT_{0}(\calT,\calU)$ is the canonical interpolation of $\theta$ 
onto the lowest-order Raviart-Thomas space with homogeneous normal boundary conditions 
and where $\xi_r \in \calP_{r+1}(\calT,\calU)$ is computed through a number of local problems over simplices 
whose computation is parallelizable. 
This reduces the least-squares problem to the special case $r=0$.
\\

The partially locally flux reconstruction can be extended to a \emph{fully localized} flux reconstruction 
if additional information about $\theta$ is given. 
Specifically, assume that $\upsilon_h \in \Ned_{r}(\calT,\calU)$ satisfies the Galerkin condition
\begin{gather}
 \label{math:curlcurlbeispiel:discrete}
 \langle \curl \upsilon_{h}, \curl \nu_{h} \rangle_{L^{2}}
 =
 \langle \theta, \nu_{h} \rangle_{L^{2}},
 \quad 
 \nu_{h} \in \Ned_{r}(\calT,\calU).
\end{gather}
Local computations provide $\theta_0 \in \RT_{0}(\calT,\calU)$ and $\xi_r \in \calP_{r+1}(\calT,\calU)$
such that $\theta = \theta_0 + \curl \xi_r$.
We observe that 
\begin{align*}
    \langle \curl \xi_r, \nu_h \rangle_{L^{2}} 
    =
    \langle \xi_r, \curl \nu_h \rangle_{L^{2}}
    ,
    \quad 
    \nu_h \in \bfH(\curl,\Omega) 
    .
\end{align*}
Let $\gamma_h \in \calP_{0,\mathrm{DC}}(\calT)$ be the $L^{2}$ orthogonal projection of $\xi_r - \curl \upsilon_h$ onto $\calP_{0,\mathrm{DC}}(\calT)$.
We note $\gamma_h$ can be computed for each simplex independently. 
Thus 
\begin{gather*}
 \langle \gamma_{h}, \tau_{h} \rangle_{L^{2}}
 =
 \left\langle \xi_r - \curl \upsilon_h, \tau_{h} \right\rangle_{L^{2}}
 ,
 \quad 
 \tau_{h} \in \calP_{0,\mathrm{DC}}(\calT)
 .
\end{gather*}
We then find for $\nu_h \in \Ned_{0}(\calT,\calU)$ that 
\begin{align*}
 0
 &=
 \langle \theta, \nu_h \rangle_{L^{2}}
 -
 \langle \curl \upsilon_h, \curl \nu_h \rangle_{L^{2}} 
 \\&=
 \langle \theta_0 + \curl \xi_r, \nu_h \rangle_{L^{2}} 
 -
 \langle \curl \upsilon_h, \curl \nu_h \rangle_{L^{2}} 
 \\&=
 \langle \theta_0, \nu_h \rangle_{L^{2}} 
 +
 \langle \xi_r - \curl \upsilon_h, \curl \nu_h \rangle_{L^{2}} 
 =
 \langle \theta_{0}, \nu_h \rangle_{L^{2}} 
 -
 \langle \gamma_h, \curl \nu_h \rangle_{L^{2}} 
\end{align*}
because of the Galerkin orthogonality~\eqref{math:curlcurlbeispiel:discrete} and $\curl \nu_h \in \calP_{0,\mathrm{DC}}(\calT)$.
Moreover, $\divergence \theta_0 = 0$ since the canonical interpolator commutes with the differential operators.
The next crucial step uses the construction in~\cite{BrSchoMax}
to find $\varrho_h \in \calP_{1,\mathrm{DC}}(\calT)$ with 
\begin{gather}
 \label{math:localizedfluxreconstruction}
 \langle \varrho_h, \curl \nu \rangle_{L^{2}}
 =
 \langle \theta_0, \nu \rangle_{L^{2}} + \langle \gamma_h, \curl \nu \rangle_{L^{2}},
 \quad 
 \nu \in \bfH(\curl,\Omega),
\end{gather}
where $\varrho_h$ is computed by solving localized problems over element patches around vertices. 
Hence, for all $\nu \in \bfH(\curl,\Omega)$ we observe 
\begin{align*}
 &
 \langle \theta, \nu \rangle_{L^{2}}
 -
 \langle \curl \upsilon_h, \curl \nu \rangle_{L^{2}} 
 \\&\quad=
 \langle \theta_0, \nu \rangle_{L^{2}} 
 +
 \langle \gamma_h, \curl \nu \rangle_{L^{2}} 
 +
 \langle \xi_r, \curl \nu \rangle_{L^{2}} 
 -
 \langle \gamma_h, \curl \nu \rangle_{L^{2}} 
 -  
 \langle \curl\upsilon_h, \curl \nu \rangle_{L^{2}} 
 \\&\quad=
 \langle \varrho_h, \curl \nu \rangle_{L^{2}} 
 +
 \langle \xi_r - \gamma_h - \curl\upsilon_h, \curl \nu \rangle_{L^{2}} 
 \\&\quad=
 \langle \varrho_h + \xi_r - \gamma_h - \curl\upsilon_h, \curl \nu \rangle_{L^{2}} 
 .
\end{align*}
We set \begin{gather*}
 \sigma_{h} := \varrho_h + \xi_r - \gamma_h.
\end{gather*}
The above results show that 
\begin{align*}
 \langle \theta, \nu \rangle_{L^{2}}
 =
 \langle \sigma_h, \curl \nu \rangle_{L^{2}}, \quad \nu \in \bfH(\curl,\Omega), 
\end{align*}
which implies that $\sigma_h \in H^{1}_{0}(\Omega)$.
In particular, $\sigma_h \in \calP_{r+1}(\calT,\calU)$ with 
\begin{align*}
 \theta
 &=
 \curl \sigma_h.
\end{align*}
The function $\sigma_h$ has been constructed only by local computations.
This completes the flux reconstruction and enables the a~posteriori error estimate~\eqref{math:pragersynge:application}.

\begin{remark} \label{rem:fluxalgorithm}  
    We have described the partially localized flux reconstruction recursively over the dimensions.
    At each level, a parallelizable block of local mutually independent computations is performed. 
    But we also see that $\xi_r$ (and $\gamma_h$) over each volume depends only on local information over that volume. 
    Hence, one can rearrange the calculations such that, 
    at the cost of redundant computations,
    we only process one parallelizable block of mutually independent local problems associated to full-dimensional simplices. 
    At the cost of even more redundant computations,
    $\sigma_h$ can be constructed with a single parallelizable block of problems associated to patches.     
    Via Remark~\ref{rem:fluxstability} we furthermore see that the stability of the construction of $\sigma_h$
    depends only on the mesh quality, the domain, and the polynomial order of the finite element spaces.
\end{remark}

\begin{remark} We briefly comment on the reliability and efficiency of the error estimator.
    The \emph{reliability} of the estimator is already expressed by~\eqref{math:pragersynge}. 
    The \emph{efficiency} of the estimator requires a converse inequality of the form 
    \begin{align*}
        \| \curl \upsilon_{h} - \curl \sigma_{h} \|_{L^{2}} 
        \leq 
        C 
        \| \curl \upsilon - \curl \upsilon_{h} \|_{L^{2}}
    \end{align*}
    for some constant $C > 0$. 
    The line of argument is analogous to the lowest-order case (see Theorem~4 and Theorem~13 in~\cite{BrSchoMax}):
    one uses the stability of the flux reconstruction, which follows from scaling arguments, 
    and comparison to the residual-based error estimator, which is known to be efficient. 
    With the assumptions made in this section, 
    in particular that $\theta \in \bfH_{0}(\divergence,\Omega) \cap \RT_{r}(\calT,\calU)$,
    one thus sees the existence of such an efficiency constant $C > 0$
    that depends only on $\Omega$, $r$, and the shape regularity of the mesh.

We have assumed that $\theta \in \RT_{r}(\calT,\calU)$ is divergence-free and lies in a finite element space. 
    More generally, 
    if we reconstruct the flux using a finite element interpolation
    $\theta \in \RT_{r}(\calT,\calU)$ of some divergence-free right-hand side
    $\tilde\theta \in \bfH_{0}(\divergence,\Omega)$,
    then additional error terms, known as \emph{data oscillation}, appear,
    just as in the scalar-valued case (see~\cite{braess2009equilibrated}). 
Going further, 
    if $\theta$ is not divergence-free
    or is not orthogonal to any finite element harmonic vector field in $\RT_{r}(\calT,\calU)$,
    then another type of additional error term appears. 
    In that case the weak formulation~\eqref{math:curlcurlbeispiel}
    admits a solution only in the sense of least squares and the Galerkin condition~\eqref{math:curlcurlbeispiel:discrete}
    will not be satisfied for all $\nu_{h} \in \Ned_{r}(\calT,\calU)$.
    The practical consequence is that the localized linear problems in the construction of Braess and Sch\"oberl
    will generally not admit exact solutions, and the same holds for our generalization to the higher-order case.
\end{remark}

Our techniques apply similarly to higher-order flux reconstruction for edge elements in dimension three.
    Again, the lowest-order case is treated in~\cite{BrSchoMax}, and the extension to the higher order case is the same almost word by word.

\subsection*{Acknowledgments}
S\"oren Bartels is thanked for bringing the original publication by Dietrich Braess and Joachim Sch\"oberl to the author's attention.
This research was supported by the European Research Council through the FP7-IDEAS-ERC Starting Grant scheme, project 278011 STUCCOFIELDS.
This work is based on the author's PhD thesis.

\end{document}